\documentclass[11pt]{amsart}
\usepackage[margin=1.2in]{geometry}
\usepackage{amsmath,amsthm,amssymb,mathtools,amsfonts,amsopn,amscd}
\usepackage{bm}
\usepackage{tikz}
\usepackage{tikz-cd}    
\usepackage{rotating}
\usepackage{commath}    
\usepackage{graphicx}
\usepackage{etoolbox}
\usepackage{enumitem}
\usepackage{longtable}
\usepackage{booktabs} 
\usepackage{tabularx} 
\usepackage{comment}
\usepackage{hyperref}
\usepackage{subfiles}
\usepackage{mathrsfs}
\usepackage{calc}
\usepackage[normalem]{ulem}
\usepackage[numbered]{bookmark}   
\usepackage[sc]{mathpazo}
\usepackage{euscript}

\usepackage[noabbrev]{cleveref}
\crefname{section}{Section}{Sections}
\Crefname{section}{Section}{Sections}
\crefname{subsection}{Section}{Sections}
\Crefname{subsection}{Section}{Sections}
\crefname{equation}{Equation}{Equations}
\Crefname{equation}{Equation}{Equations}
\crefname{figure}{Figure}{Figures}
\Crefname{figure}{Figure}{Figures}
\crefname{table}{Table}{Tables}
\Crefname{table}{Table}{Tables}
\crefname{thm}{Theorem}{Theorems}
\Crefname{thm}{Theorem}{Theorems}
\crefname{lem}{Lemma}{Lemmas}
\Crefname{lem}{Lemma}{Lemmas}
\crefname{prop}{Proposition}{Propositions}
\Crefname{prop}{Proposition}{Propositions}
\crefname{cor}{Corollary}{Corollaries}
\Crefname{cor}{Corollary}{Corollaries}
\crefname{df}{Definition}{Definitions}
\Crefname{df}{Definition}{Definitions}
\crefname{ex}{Example}{Examples}
\Crefname{ex}{Example}{Examples}
\crefname{rmk}{Remark}{Remarks}
\Crefname{rmk}{Remark}{Remarks}
\crefname{clm}{Claim}{Claims}
\Crefname{clm}{Claim}{Claims}
\crefname{conj}{Conjecture}{Conjectures}
\Crefname{conj}{Conjecture}{Conjectures}
\setlist[enumerate,1]{label=(\roman*)}

\AddToHook{env/lem/begin}{\crefalias{thm}{lem}}
\AddToHook{env/prop/begin}{\crefalias{thm}{prop}}
\AddToHook{env/cor/begin}{\crefalias{thm}{cor}}
\AddToHook{env/df/begin}{\crefalias{thm}{df}}
\AddToHook{env/ex/begin}{\crefalias{thm}{ex}}
\AddToHook{env/rmk/begin}{\crefalias{thm}{rmk}}
\AddToHook{env/clm/begin}{\crefalias{thm}{clm}}
\AddToHook{env/conj/begin}{\crefalias{thm}{conj}}

\definecolor{secondaryColor}{RGB}{0, 0, 170}
\hypersetup{
	colorlinks=true,
	linkcolor=blue,
	urlcolor=secondaryColor,
	citecolor=secondaryColor,
	linktoc=page,
}

\theoremstyle{plain}
\newtheorem{thm}{Theorem}[section]
\newtheorem{lem}[thm]{Lemma}
\newtheorem{prop}[thm]{Proposition}
\newtheorem{cor}[thm]{Corollary}
\newtheorem{conj}[thm]{Conjecture}

\theoremstyle{definition}
\newtheorem{df}[thm]{Definition}

\theoremstyle{remark}
\newtheorem{rmk}[thm]{Remark}

\numberwithin{equation}{subsection}

\newcommand{\ZZ}{\mathbb{Z}}

\newcommand{\QQ}{\mathbb{Q}}

\newcommand{\CC}{\mathbb{C}}

\newcommand{\FF}{\mathbb{F}}

\newcommand{\Acal}{\mathcal{A}}
\newcommand{\Bcal}{\mathcal{B}}
\newcommand{\Ccal}{\mathcal{C}}

\newcommand{\Gcal}{\mathcal{G}}

\newcommand{\Lcal}{\mathcal{L}}

\newcommand{\Ocal}{\mathcal{O}}

\newcommand{\Zcal}{\mathcal{Z}}

\newcommand{\GL}{\operatorname{GL}}
\newcommand{\SL}{\operatorname{SL}}

\newcommand{\tr}{\operatorname{tr}}

\newcommand{\ord}{\operatorname{ord}}

\let\oldforall\forall
\renewcommand{\forall}{\oldforall \: }
\let\oldexist\exists
\renewcommand{\exists}{\oldexist \: }

\let\emptyset\varnothing

\usepackage{stmaryrd}
\SetSymbolFont{stmry}{bold}{U}{stmry}{m}{n}

\newcommand{\dep}{\operatorname{dep}}
\newcommand{\wt}{\operatorname{wt}}
\newcommand{\gr}{\operatorname{gr}}

\newcommand{\ww}[1]{\mathfrak{#1}}

\newcommand{\bff}{\bm{f}}

\newcommand{\bfz}{\mathbf{z}}
\newcommand{\bfw}{\mathbf{w}}

\newcommand{\Lamz}{\Lambda_{\bfz'}}
\DeclareMathOperator{\Span}{span}

\newcommand{\ov}[1]{\overline{#1}}
\newcommand{\wtd}[1]{\widetilde{#1}}

\allowdisplaybreaks

\newcommand{\ang}[1]{\langle #1 \rangle}
\renewcommand{\subset}{\subseteq}
\renewcommand{\emptyset}{\varnothing}

\makeatletter

\newcommand{\myToC}{{
		\renewcommand{\contentsname}{}
		\@starttoc{toc}{\contentsname}
}}

\patchcmd{\@tocline}
{\hfil}
{\leaders\hbox{\,.\,}\hfil}

\makeatother

\title[On Multiple Eisenstein Series in Positive Characteristic: Direct Sum Result]{On Multiple Eisenstein Series in Positive Characteristic:\\ Direct Sum Result}

\author{Chieh-Yu Chang}
\address{(Chieh-Yu Chang) Department of Mathematics, National Tsing Hua University, No. 101, Sec. 2, Guangfu Rd., East Dist., Hsinchu City 300044, Taiwan (R.O.C.)}
\email{cychang@math.nthu.edu.tw}

\author{Song-Yun Chen}
\address{(Song-Yun Chen) Department of Mathematics, National Tsing Hua University, No. 101, Sec. 2, Guangfu Rd., East Dist., Hsinchu City 300044, Taiwan (R.O.C.)}
\email{s114021702@m114.nthu.edu.tw}

\author{Fei-Jun Huang}
\address{(Fei-Jun Huang) Department of Mathematics, National Tsing Hua University, No. 101, Sec. 2, Guangfu Rd., East Dist., Hsinchu City 300044, Taiwan (R.O.C.)}
\email{fjhuang@gapp.nthu.edu.tw}

\author{Hung-Chun Tsui}
\address{(Hung-Chun Tsui) Department of Mathematics, National Tsing Hua University, No. 101, Sec. 2, Guangfu Rd., East Dist., Hsinchu City 300044, Taiwan (R.O.C.)}
\email{hctsui@gapp.nthu.edu.tw}

\date{\today}

\subjclass[2020]{Primary 11M32; Secondary 11M38, 11M58}
\keywords{Multiple zeta values, Multiple Eisenstein series, Graded algebras}
\thanks{}

\begin{document}
	
\begin{abstract}
In this paper, we study multiple Eisenstein series (MES) in positive characteristic. By computing and analyzing the $t$-expansions of MES, we determine the precise ``weights" of their coefficients. This framework enables us to establish a graded algebra structure for MES, extending the direct sum result for Thakur's multiple zeta values proved in~\cite{Cha14}. Our result may also be viewed as a function field analogue of the corresponding direct sum result of Bachmann and Kanno~\cite{BK26}.
\end{abstract}

\maketitle
\tableofcontents
\section{Introduction}

\subsection{Classical story}
Throughout this paper, we denote by $\ZZ$ the set of integers.
An index refers to the empty index $\emptyset$ or a tuple of positive integers.
For any nonempty index $\ww{s}=(s_1,\ldots,s_m)$, we put 
\[
{\wt}(\ww{s})=s_1+\cdots+s_m
\qquad\text{and}\qquad
{\dep}(\ww{s})=m,
\]
which are called the weight and depth of the index $\ww{s}$, respectively. By convention, we define $\wt(\varnothing)=\dep(\varnothing)=0$.

Classical real multiple zeta values (MZVs for short) are generalizations
of the Riemann zeta values at positive integers, originating
in the works of Euler, Hoffman, and Zagier~\cite{Hoffman92,Zagier94}. They are defined, for any nonempty index $\ww{s}=(s_{1},\ldots,s_{m})$ with $s_{1}\geq 2$, by the following series
\[ \zeta(s_1,\ldots,s_m)
:=
\sum_{n_1>\cdots>n_m\geq 1}
\frac{1}{n_1^{s_1}\cdots n_m^{s_m}} .\] 
MZVs have been intensively studied since their introduction, as they occur in number theory and arithmetic geometry as well as their intersection (see~\cite{And04,Zhao2016,BF26}).

A remarkable deformation of MZVs is provided by the multiple Eisenstein series (abbreviated as MES) initiated by Gangl-Kaneko-Zagier  \cite{gkz2006double} in the depth two case, and generalized by Bachmann for higher depth \cite{bachmann2012multiple}. We denote by $\mathbb{H}$ the complex upper half-plane. For any nonempty
index $\ww{s}=(s_1,\ldots,s_m)$ with $s_1\geq 2,\ldots,s_m\geq 2$, the associated
MES is the multiple lattice sum
\[
G_{\ww{s}}(\tau)
:=
\sum_{\lambda_1\succ\cdots\succ\lambda_m\succ 0}
\frac{1}{\lambda_1^{s_1}\cdots\lambda_m^{s_m}},
\qquad \tau\in\mathbb{H},
\]
where the summation is taken over the lattice $\mathbb{Z}\tau+\mathbb{Z}$.
When $s_1=2$, we use Eisenstein summation (see~\cite{Bachmann2023StuffleRegularized}). Here,
$\lambda=a\tau+b\succ0$ means $a>0$, or $a=0$ and $b>0$, and
$\lambda\succ\lambda'$ means $\lambda-\lambda'\succ0$. Based on \cite{gkz2006double,bachmann2012multiple}, the constant term of the Fourier expansion of $G_{\ww{s}}(\tau)$ is the corresponding MZV $\zeta(\ww{s})$. 

For any integer $w\geq 2$, we denote by $\mathfrak{Z}_{w}$
(resp.~$\mathcal{E}_{w}$) the $\mathbb{Q}$-vector space spanned by
$\zeta(\ww{s})$ for $\ww{s}=(s_1,\ldots,s_m)$ with $s_1\geq 2$
(resp.~$G_{\ww{s}}(\tau)$ for $\ww{s}=(s_1,\ldots,s_m)$ with
$s_1\geq 2,\ldots,s_m\geq 2$) and $\wt(\ww{s})=w$. We further set
\[
\mathfrak{Z}_0=\mathcal{E}_0=\mathbb{Q},
\qquad
\mathfrak{Z}_1=\mathcal{E}_1=\{0\}.
\]
Put $\mathfrak{Z}=\sum_{w\geq 0}\mathfrak{Z}_{w}$ and $\mathcal{E}:=\sum_{w\geq 0}\mathcal{E}_{w}$.  From the harmonic product,  $\mathfrak{Z}$ and $\mathcal{E}$ form $\mathbb{Q}$-algebras. More precisely,  we have that   for integers $w_{1},w_{2}\geq 0$,
\[ \mathfrak{Z}_{w_{1}}\cdot\mathfrak{Z}_{w_2}\subset \mathfrak{Z}_{w_1+w_2}, \quad  \mathcal{E}_{w_{1}}\cdot\mathcal{E}_{w_2}\subset \mathcal{E}_{w_1+w_2}. \]
The celebrated Goncharov--Zagier direct sum conjecture
\cite{Zagier94,Goncharov01} predicts that there should be no nontrivial
$\mathbb{Q}$-linear relations among MZVs of different weights:
\begin{conj} [{\textnormal{Goncharov--Zagier}}]
    The $\mathbb{Q}$-algebra $\mathfrak{Z}$ is a graded $\mathbb{Q}$-algebra, i.e., $\mathfrak{Z}=\bigoplus_{w\geq 0}\mathfrak{Z}_{w}$. 
\end{conj}

In the very recent paper~\cite{BK26}, Bachmann and Kanno give an affirmative answer to the analogue of Goncharov--Zagier's conjecture for MES in the following results.

\begin{thm}[{\textnormal{Bachmann-Kanno~\cite{BK26}}}]\label{Thm:BK26}
    Let $\mathcal{E}_{\CC}$ be the $\CC$-vector space spanned by all multiple Eisenstein series. Then the following results hold.
    \begin{enumerate}
        \item $\mathcal{E}=\bigoplus_{w\geq 0}\mathcal{E}_{w}$.
        \item The natural map $\mathcal{E}\otimes_{\QQ}\CC \twoheadrightarrow \mathcal{E}_{\CC}$ is an isomorphism.
    \end{enumerate}
\end{thm}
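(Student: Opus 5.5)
The plan is to exploit the $q$-expansion of multiple Eisenstein series, following the Gangl--Kaneko--Zagier and Bachmann computation. First I will write each $G_{\ww{s}}(\tau)$, with $q=e^{2\pi i\tau}$ and $\wt(\ww{s})=w$, as a finite sum of products of MZVs $\zeta(\ww{a})$ and multiple divisor-type $q$-series $g_{\ww{b}}(q)$ (Bachmann's $q$-analogues), up to powers of $(-2\pi i)$. The key bookkeeping fact is that every term has total weight $w$:
\[
\wt(\ww{a})+\wt(\ww{b}) = w ,
\]
and each $g_{\ww{b}}$ is $(-2\pi i)^{\wt(\ww{b})}$ times a $q$-series with rational coefficients. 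Collecting terms, the coefficient of $q^n$ in $G_{\ww{s}}$ is a $\QQ$-linear combination of elements $\zeta(\ww{a})(2\pi i)^{k}$ with $\wt(\ww{a})+k=w$. In other words, it is a homogeneous element of weight $w$ in the algebra $\mathfrak{Z}[2\pi i]$, where $2\pi i$ is given weight $1$.

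To prove (i), suppose $\sum_w F_w=0$ with $F_w\in\mathcal{E}_w$, and compare $q$-expansions coefficientwise. I will not try to use the unknown Goncharov--Zagier grading of MZVs. Instead I will separate weights through the modular-type transformation behaviour, or through a formal parameter. The idea is to replace the lattice $\ZZ\tau+\ZZ$ by $\ZZ\tau+\ZZ\cdot\mu$: then $G_{\ww{s}}$ scales as $\mu^{-w}$, since the summation order $\succ$ is invariant under positive scaling of the second generator. Varying $\mu>0$ and keeping $\tau/\mu$ fixed, a relation $\sum_w F_w=0$ that holds identically becomes $\sum_w \mu^{-w}F_w(\tau')=0$ for all $\mu$. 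Distinct powers of $\mu$ force $F_w=0$ for each $w$.

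The subtle point is that the relation is assumed only as an identity of functions in $\tau$, not in the pair $(\tau,\mu)$. Homogeneity says $F_w(\mu\tau,\mu)=\mu^{-w}F_w(\tau,1)$. Since $\mathcal{E}_w$ is defined by the fixed normalization $\mu=1$, I need a second variable intrinsically. My first attempt is the shift $\tau\mapsto\tau+1$ combined with the $q$-expansion structure, looking for an operator that acts by $w$ on weight-$w$ MES. If that fails, I will use the derivation $D=q\,d/dq$ together with the explicit form of the coefficients. In weight $w$ the $q$-coefficients of $g_{\ww{b}}$ grow polynomially in $n$ with degree controlled by $\wt(\ww{b})-\dep(\ww{b})$, so one can try to separate weights by the growth rate in $n$ combined with the powers of $2\pi i$. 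This separation is the main obstacle. The most robust route I expect is Bachmann--Kanno style: express the $q$-series parts via the bi-brackets / combinatorial multiple Eisenstein series, where a grading is known for the relevant $\QQ$-span of $q$-series with an explicit basis behaviour. From that, deduce that the coefficient of each monomial $(2\pi i)^k g_{\ww{b}}$ must vanish weight by weight, and finally handle the constant term using the first step.

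For (ii), I will take a $\QQ$-basis of each $\mathcal{E}_w$ and show its $\CC$-linear independence. Expanding a $\CC$-linear relation in $q$, the coefficient vectors lie in $\QQ[2\pi i]$-combinations of rational $q$-series. The same reduction as in (i) shows that independence of the rational $q$-series $g_{\ww{b}}$ over $\CC$ (they have rational coefficients, and linear independence over $\QQ$ of power series with rational coefficients persists over $\CC$) lifts to independence of the MES. This is the step where I need the explicit triangular shape of the $q$-expansion in terms of the $g_{\ww{b}}$, with top term $(2\pi i)^w g_{\ww{s}}$ up to a rational constant.
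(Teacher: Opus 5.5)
\textbf{There is a genuine gap in (i)}, at exactly the step you call the main obstacle, and none of your proposed detours closes it.

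In a relation $\sum_{w\le W}F_w=0$, the $q$-expansion of $F_W$ coincides with that of $-\sum_{w<W}F_w$. Weight is intrinsic to neither ingredient of these expansions:
\begin{itemize}
\item Separating a vanishing $\QQ$-combination of the $\zeta(\ww{a})(2\pi i)^k$ by weight would require the open Goncharov--Zagier conjecture.
\item The $\QQ$-span of the normalized $q$-series $(2\pi i)^{-\wt(\ww{b})}g_{\ww{b}}$ (Bachmann's brackets) is only filtered by weight, not graded. For example, $G_4^2=\tfrac{7}{6}G_8$, combined with the expansion of $g_4^2$ into $2g_{4,4}$ plus diagonal terms, yields a $\QQ$-linear relation among the normalized $g_{4,4},g_8,g_4,g_2$ with nonzero $g_2$-coefficient.
\end{itemize}
So bracket bases, growth rates in $n$, and $q\,\frac{d}{dq}$ (which does not act by a scalar on $\mathcal{E}_w$) cannot force $F_W=0$. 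The shift $\tau\mapsto\tau+1$ is also useless: the basis change $(a,b)\mapsto(a,a+b)$ preserves $\succ$, so $G_{\ww{s}}$ is $1$-periodic. Your strategy is the classical analogue of the paper's $t$-expansion induction in \S\ref{sec.proof of 1.5}. That induction works there only because its base case, the direct sum theorem for Thakur's MZVs, is already known.

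\textbf{The missing idea is the shifted MES of \cite{BK26}}, which the appendix transports to function fields (\cref{prop.shifted MES converges to MES}). You realize your scaling $\mu$ through a non-parabolic change of lattice basis.
\begin{itemize}
\item Take $\gamma_N=\left(\begin{smallmatrix}N&-1\\1&0\end{smallmatrix}\right)$, $j_N=N-\tau$ and $\tau_N=1/j_N\in\mathbb{H}$. Then $\gamma_N(\tau_N,1)^{\mathrm{T}}=j_N^{-1}(\tau,1)^{\mathrm{T}}$.
\item Hence $G_{\ww{s}}(\tau_N)=j_N^{\wt(\ww{s})}G^{\langle N\rangle}_{\ww{s}}(\tau)$, where $G^{\langle N\rangle}_{\ww{s}}$ is the same lattice sum taken with the order transported by $\gamma_N$.
\item Since $(c,d)\gamma_N=(cN+d,-c)$, the transported order agrees with $\succ$ on all $a\tau+b$ with $|b|<N/2$. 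Therefore $G^{\langle N\rangle}_{\ww{s}}(\tau)\to G_{\ww{s}}(\tau)$ as $N\to\infty$, with extra care for Eisenstein summation when $s_1=2$.
\item Evaluate the relation at $\tau_N$, divide by $j_N^{W}$, and let $N\to\infty$. This gives $F_W=0$, and you descend weight by weight.
\end{itemize}
This argument works verbatim with $\CC$-coefficients.

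\textbf{For (ii)}, the triangular shape you point to is the right ingredient. However, ``independence of the $g_{\ww{b}}$'' is false (see the relation above), so the argument must be run on the weight filtration, as in the rank-two part of the appendix.
\begin{itemize}
\item Let $\mathcal{G}_{L,\le w}$ be the $L$-span of the normalized $q$-series of weight $\le w$.
\item Because their coefficients are rational, $\mathcal{G}_{\QQ,\le w}\otimes_{\QQ}\CC\cong\mathcal{G}_{\CC,\le w}$. Hence the same holds for $\gr_w$, and $\mathcal{G}_{\QQ,\le w}\cap\mathcal{G}_{\CC,\le w-1}=\mathcal{G}_{\QQ,\le w-1}$.
\item You then show that $(2\pi i)^{-w}G_{\ww{s}}\mapsto(2\pi i)^{-w}g_{\ww{s}}\bmod\mathcal{G}_{L,\le w-1}$ is a well-defined isomorphism from the normalized weight-$w$ MES space onto $\gr_w\mathcal{G}_L$, for $L=\QQ$ and $L=\CC$.
\item Well-definedness over $\QQ$ uses the intersection property.
\item Injectivity uses triangularity to place $\mathcal{G}_{\CC,\le w-1}$ inside the span of MES of weight $<w$, and then (i) over $\CC$. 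That same $\CC$-version of (i) is needed again to sum over $w$.
\end{itemize}
So (ii) rests on the $\CC$-version of (i), which your proposal does not supply.
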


In the function field setting, the first named author~\cite{Cha14} proved an analogue of Goncharov--Zagier's direct sum conjecture for Thakur's MZVs by incorporating the Baker--Wüstholz--Yu philosophy \cite{Wus89,BW07,Yu97}. The purpose of this paper is to extend the result of~\cite{Cha14} to MES in positive characteristic defined in~\cite{Chen2017, CCHT25}. 

\subsection{Function field analogue}

Throughout this paper, we fix the following notation. 
Let $p$ be a prime and let $q$ be a power of $p$. Let $\mathbb{F}_q$
denote the finite field with $q$ elements and  $\theta$ be a variable. We denote by $A:=\mathbb{F}_q[\theta]$ the polynomial ring in  $\theta$ over $\FF_{q}$ with field of fractions $K:=\mathbb{F}_q(\theta)$. Let
$K_\infty:=\mathbb{F}_q(\!(1/\theta)\!)$
be the completion of $K$ at the infinite place $\infty$, with uniformizer
$1/\theta$, and let
$\mathbb{C}_\infty:=\widehat{\overline{K_\infty}}$
be the completion of a fixed algebraic closure of $K_\infty$. We let $|\cdot|$ be the absolute value on $\CC_\infty$ normalized such that $|\theta|=q$. We further
denote by $\overline{K}$ the algebraic closure of $K$ in $\mathbb{C}_\infty$. Finally, we put $A_+=\{a\in A\mid a\text{ is monic} \}$.

For any nonempty index $\ww{s} = (s_1,\ldots,s_m)$, Thakur's MZV at $\ww{s}$
\cite{thakur2004function} is defined by 
\[
\zeta_A(\ww{s}) := \sum_{\substack{f_1,\ldots,f_m\in A_+ \\ \deg f_1 > \cdots > \deg f_m}}
\frac{1}{f_1^{s_1}\cdots f_m^{s_m}}\in K_{\infty}.
\]
Carlitz zeta values~\cite{Car35} are special cases of  Thakur's MZVs with depth one. By convention, we put $\zeta_A(\varnothing)=1$.

For any subfield $L$ of $\mathbb{C}_{\infty}$ containing $K$ and integer $w\ge 0$, we put
\[
\mathcal{Z}_{L,w} := \Span_L \{\zeta_A(\ww{s})\mid \wt(\ww{s}) = w\}.
\]
In~\cite{thakur2010shuffle}, Thakur discovered and proved the so-called $q$-shuffle product, which implies that for any integers $w_1,w_2\geq 0$,
\[
\mathcal{Z}_{L,w_1}\cdot \mathcal{Z}_{L,w_2} \subseteq \mathcal{Z}_{L, w_1+w_2}.
\]
It follows that the $L$-vector space spanned by all Thakur's MZVs, denoted by
\[
\mathcal{Z}_L := \sum_{w=0}^{\infty} \mathcal{Z}_{L,w},
\]
forms an $L$-algebra. 

Using the linear independence criterion of Anderson--Brownawell--Papanikolas \cite{ABP04} as well as period interpretation of Thakur's MZVs proved by Anderson-Thakur \cite{AT09}, we have the following stronger form of function field analogue of Goncharov--Zagier's direct sum conjecture.

\begin{thm}[{\textnormal{\cite[Theorem~2.2.1]{Cha14}}}]\label{thm.Goncharov-Zagier conjecture for function field}
    We keep the notation as above. Then the following results hold.
    \begin{enumerate}  
        \item $\mathcal{Z}_{\overline{K}}$ is a graded algebra, i.e.,  $\mathcal{Z}_{\overline{K}} = \bigoplus_{w=0}^{\infty} \mathcal{Z}_{\overline{K}, w}$.
        \item The natural map $\mathcal{Z}_K\otimes_K \overline{K}\to \mathcal{Z}_{\overline{K}}$ is an isomorphism.
    \end{enumerate}
\end{thm}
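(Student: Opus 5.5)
We follow the strategy of \cite{Cha14}: realize each $\zeta_A(\ww{s})$ as a period of a $t$-motive, then apply the Anderson--Brownawell--Papanikolas linear independence criterion \cite{ABP04}, which is a function field form of the Baker--W\"ustholz--Yu philosophy.

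\textbf{Step 1: periods.} By Anderson--Thakur \cite{AT09}, for every index $\ww{s}$ of weight $w$ there is a rigid analytic function $\mathcal{L}_{\ww{s}}(t)$ satisfying a Frobenius difference equation of the form $\Psi^{(-1)}=\Phi\Psi$. Here $\Phi$ is a lower triangular matrix with entries in $\overline{K}[t]$ built from Anderson--Thakur polynomials. The function $\mathcal{L}_{\ww{s}}(t)$ specializes at $t=\theta$ to
\[
\Gamma_{\ww{s}}\,\zeta_A(\ww{s})/\tilde{\pi}^{w},
\]
where $\Gamma_{\ww{s}}\in K^\times$ is a product of Carlitz factorials and $\tilde{\pi}$ is the Carlitz period. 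Consequently, a $\overline{K}$-linear relation among $\zeta_A$-values of several weights becomes a relation
\[
\sum_w \tilde{\pi}^{w}\, c_w \;=\; 0,
\]
where each $c_w$ is a $\overline{K}$-combination of values at $t=\theta$ of entries of a single block-diagonal system $\Phi$. We assemble $\Phi$ from the $\Phi_{\ww{s}}$ of all indices involved and also include the Carlitz motive, so that $\tilde{\pi}$ appears as the period $\Omega^{-1}(\theta)$.

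\textbf{Step 2: lift the relation.} Since $\Phi$ satisfies the hypotheses of \cite{ABP04} (its determinant is a power of $(t-\theta)$ times a constant, and $\Psi$ is entire), any $\overline{K}$-linear relation among the entries of $\Psi(\theta)$ lifts to a $\overline{K}[t]$-linear relation among the entries of $\Psi$ itself. So we obtain functions $f_{\ww{s}}(t)\in\overline{K}[t]$ with
\[
\sum_{\ww{s}} f_{\ww{s}}(t)\,\Omega(t)^{-\wt(\ww{s})}\,\mathcal{L}_{\ww{s}}(t)=0
\]
as functions of $t$, and with $f_{\ww{s}}(\theta)$ equal to the given coefficients up to the nonzero factors $\Gamma_{\ww{s}}$.

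\textbf{Step 3: separate weights.} We apply the Frobenius twist $(-1)$ to this relation. Using $\Omega^{(-1)}=(t-\theta)\Omega$ and the difference equations satisfied by the $\mathcal{L}_{\ww{s}}$, the twisted relation picks up factors $(t-\theta)^{\wt(\ww{s})}$, and these factors differ between weights. We compare the relation with its twist, or equivalently compare the orders of vanishing at $t=\theta$ and at its twists $t=\theta^{q^{i}}$, where $\Omega$ has its zeros. We then induct on the number of weights present, or on the maximal weight, always peeling off the top-weight part.

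The main obstacle is this step: showing that the part of the relation of each fixed weight $w$ vanishes at $t=\theta$ separately. The approach is to pass to a relation among the matrix entries, rewrite the difference equation in a weight-graded form, and argue that a minimal-length counterexample would lead to a contradiction. This gives part (i).

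\textbf{Step 4: descent to $K$.} Part (ii) follows the same scheme applied within a fixed weight, together with a Galois descent. The system $\Phi$ is defined over $K$. By \cite{ABP04}, the $\overline{K}$-linear relations among the entries of $\Psi(\theta)$ are spanned by specializations of $\overline{K}[t]$-relations, and these relations are cut out by $\overline{K}[t]$-linear algebra whose defining data have coefficients in $K$. Hence the relation space is defined over $K$, i.e.\ it has a $K$-basis. This means $K$-linear independence implies $\overline{K}$-linear independence, so the natural map $\mathcal{Z}_K\otimes_K\overline{K}\to\mathcal{Z}_{\overline{K}}$ is injective; it is surjective by construction.
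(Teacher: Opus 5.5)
The paper does not reprove this statement; it quotes \cite{Cha14}, whose argument uses the same Anderson--Thakur plus ABP framework you chose. However, your Steps~3 and~4 carry the whole content of the theorem, and neither is actually supplied. The mechanisms you indicate also do not work as stated.

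(a) In Step~2, $\Omega^{-1}$ has poles at $t=\theta^{q^i}$ ($i\geq1$), so $\Omega^{-\wt(\ww{s})}\mathcal{L}_{\ww{s}}$ is not entire and ABP does not apply. Moreover, $\sum_w\tilde{\pi}^w c_w=0$ is not a \emph{linear} relation among entries of a block-diagonal $\Psi(\theta)$. The correct setup divides by $\tilde{\pi}^{w_{\max}}$ and uses the blocks $(t-\theta)^{w_{\max}-\wt(\ww{s})}\Phi_{\ww{s}}$, whose solutions $\Omega^{w_{\max}-\wt(\ww{s})}\psi_{\ww{s}}$ are entire, plus a block $\Omega^{w_{\max}}$ for the constant term.

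(b) In Step~3, twisting does not just produce factors $(t-\theta)^{\wt(\ww{s})}$. The last row of the difference equation gives $\mathcal{L}_{\ww{s}}^{(-1)}=\mathcal{L}_{\ww{s}}+(\text{a lower-depth term})$, so the twisted relation involves new functions and there is no clean weight grading to compare. ``A minimal counterexample leads to a contradiction'' is not yet an argument.

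(c) In Step~4, the claim that the relation space is defined over $K$ ``because $\Phi$ is'' is unjustified. That space depends on $\Psi$, not only on $\Phi$, and $\mathrm{Gal}(\overline{K}/K)$ does not act on the transcendental entries of $\Psi$. You also never use that $\zeta_A(\ww{s})\in K_\infty$, which the descent needs.

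The missing idea, which your remark about the zeros of $\Omega$ only gestures at, is to \emph{specialize the ABP lift $P\psi=0$ at $t=\theta^{q^N}$}, $N\geq1$. Each $\Phi_{\ww{s}}$ has last column $(0,\ldots,0,1)^{\mathrm{tr}}$ and all its other entries vanish at $t=\theta$. Iterating $\psi=\Phi^{(1)}\psi^{(1)}$ therefore gives $\psi_{\ww{s}}(\theta^{q^N})=(0,\ldots,0,\mathcal{L}_{\ww{s}}(\theta)^{q^N})^{\mathrm{tr}}$, while every block carrying a positive power of $\Omega$ vanishes there. Write $P_{\ww{s}}$ for the last coordinate of $P$ on the $\ww{s}$-block. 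You get $\sum_{\wt(\ww{s})=w_{\max}}P_{\ww{s}}(\theta^{q^N})^{1/q^N}\Gamma_{\ww{s}}\zeta_A(\ww{s})=0$ for all $N\geq1$.

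For (i), first rewrite the top-weight part in a $\overline{K}$-basis of $\mathcal{Z}_{\overline{K},w_{\max}}$ consisting of MZVs. The relations above then force $P_{\ww{s}}(\theta^{q^N})=0$ for all $N$, so $P_{\ww{s}}=0$ and the top-weight coefficients $P_{\ww{s}}(\theta)$ vanish. Now induct downward on the weight.

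For (ii), take a minimal $\overline{K}$-relation $\sum c_{\ww{s}}\zeta_A(\ww{s})=0$ normalized by $c_{\ww{s}_0}=1$; by minimality it is unique. Let $\Gamma_{\ww{s}}(t)$ denote $\Gamma_{\ww{s}}$ with $\theta\mapsto t$. Comparing with the specialized relations shows that $R_{\ww{s}}:=P_{\ww{s}}(t)\Gamma_{\ww{s}}(t)/(P_{\ww{s}_0}(t)\Gamma_{\ww{s}_0}(t))\in\overline{K}(t)$ satisfies $R_{\ww{s}}(\theta^{q^N})=c_{\ww{s}}^{q^N}$ for all large $N$. If $Q\in\FF_q[\theta,X]$ annihilates $c_{\ww{s}}$, then $Q(t,R_{\ww{s}}(t))$ has infinitely many zeros and so vanishes identically. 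Hence $R_{\ww{s}}\in\overline{\FF}_q(t)$, and therefore $c_{\ww{s}}\in\overline{\FF}_q(\theta)$. Finally, $\zeta_A(\ww{s})\in K_\infty$ and $\overline{\FF}_q$ is linearly disjoint from $K_\infty$ over $\FF_q$, which turns this into a nontrivial $K$-linear relation.
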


In other words, $\mathcal{Z}_{\overline{K}}$ is a graded algebra ``defined over $K$'' in the sense that $K$-linear independence of Thakur's MZVs implies $\overline{K}$-linear independence.

Motivated by Gangl--Kaneko--Zagier, MES in positive
characteristic were introduced by Chen~\cite{Chen2017} in the case of rank two and depth two, and
the theory was later extended to MES of arbitrary rank and arbitrary depth
in~\cite{CCHT25}. Hereafter, MES will always refer to multiple Eisenstein
series in positive characteristic whenever it is clear from the context. The primary goal of this paper is to
extend \cref{thm.Goncharov-Zagier conjecture for function field}
to MES. We briefly recall their definition here, and refer to
\S\ref{sec.preliminary} for further details. 

Fix a positive integer $r$. For any $\bff = (f_1,\ldots,f_r) \in A^r$ and $\bfz = (z_1,\ldots,z_r)\in \CC_\infty^r$, we put
\[
    \langle \bff, \bfz \rangle := \bm{f}\bfz^{\tr}=f_1z_1 + \cdots + f_rz_r \in \CC_\infty.
\] By viewing the set $A_{+}$ as an analogue of the set of positive integers, we define a partial order $\succ$ on $A^{r}$ given in \cref{df.partial order on Λ_z}, and then define the following MES of rank $r$, which are rigid analytic functions on the
Drinfeld symmetric space
\[
\Omega^r
=
\left\{
\bfz=(z_1,\ldots,z_r)\in\CC_\infty^r
\mid
z_1,\ldots,z_r
\text{ are $K_\infty$-linearly independent and } z_r=1
\right\}.
\]

\begin{df}[{\textnormal{\cite[Definition~1.4]{CCHT25}}}]\label{df.multiple Eisenstein series}
    Fix a positive integer $r$.
    For any $\mathbf{z}\in \Omega^r$ and nonempty index $\ww{s} = (s_1,\ldots,s_m)$, we define the multiple Eisenstein series of rank $r$ by
    \[
    E_r(\ww{s}; \mathbf{z}) = \sum_{\substack{\bm{f}_1,\ldots,\bm{f}_m \in A^r \\  \bm{f}_1\succ \cdots \succ  \bm{f}_m \succ 0}}\frac{1}{\left\langle \bm{f}_1,\bfz\right\rangle^{s_1}\cdots \left\langle \bm{f}_m, \mathbf{z}\right\rangle^{s_m}}.
    \]
    We adopt the convention that $E_r(\emptyset; \mathbf{z}): = 1$.
\end{df}
Note that the above definition was initiated by Chen~\cite{Chen2017} in the case of $r=2$ and $m=2$. We also mention that, in \cite{Pel2025}, Pellarin studied vector-valued versions of MES in the rank two case.
In analogy with the classical case, our MES can be regarded as a higher-rank deformation of Thakur's MZVs as we have 
\[
E_1(\ww{s}; \mathbf{z}) = \zeta_A(\ww{s})
\]
(see \cite[p.~4]{CCHT25}).
Moreover, for $r\geq 2$, the MES $E_r(\ww{s}; \mathbf{z})$ admits a ``Goss expansion" with constant term $E_{r-1}(\ww{s}; \mathbf{w})$  (see  \cref{prop.Goss expansion of MES}).

Let $r\geq 1$.
For any  subfield $L$ of $\mathbb{C}_{\infty}$ containing $K$ and integer $w\geq 0$, we set
\[
\mathcal{Z}^{(r)}_{L, w} := \Span_{L} \{E_r(\ww{s}; \mathbf{z})\mid \wt(\ww{s}) = w\}
\]
and let
\[
\mathcal{Z}^{(r)}_L := \sum_{w=0}^{\infty} \mathcal{Z}^{(r)}_{L,w}.
\]
It is shown in~\cite[Theorem~1.6]{CCHT25} that Thakur's $q$-shuffle product can be lifted to MES. Therefore, for any integers $w_1,w_2\geq 0$, 
\begin{equation}\label{eq.q-shuffle-product-MES}
    \mathcal{Z}^{(r)}_{L,w_1}\cdot \mathcal{Z}^{(r)}_{L, w_2} \subseteq \mathcal{Z}^{(r)}_{L, w_1+w_2}
\end{equation}
and $\mathcal{Z}^{(r)}_L$ becomes an $L$-algebra.

\subsection{Main results and idea of proof}

The main result of this paper is stated as follows. 

\begin{thm} \label{thm.main-thm}
    For any integer $r\geq 1$, the following results hold.
    \begin{itemize}
        \item[(i)]  $\Zcal^{(r)}_{\overline{K}}$ is a graded algebra, i.e.,  \[\Zcal^{(r)}_{\overline{K}} = \bigoplus_{w=0}^{\infty}\Zcal^{(r)}_{\overline{K},w}.\]
        \item[(ii)] The natural map $\mathcal{Z}^{(r)}_{K}\otimes_K \overline{K}\twoheadrightarrow  \mathcal{Z}^{(r)}_{\overline{K}}$ is an isomorphism of $\overline{K}$-algebras. 
    \end{itemize}
\end{thm}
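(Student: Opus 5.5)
We argue by induction on the rank $r$. The base case $r=1$ is \cref{thm.Goncharov-Zagier conjecture for function field}, because $E_1(\ww{s};\bfz)=\zeta_A(\ww{s})$. For $r\geq 2$ we use the Goss expansion (\cref{prop.Goss expansion of MES}). Its constant term is $E_{r-1}(\ww{s};\mathbf{w})$, so a relation among rank-$r$ MES should force a relation among the coefficients of the expansion. The issue is that the higher coefficients are not themselves MES of rank $r-1$ and a fixed weight. The plan is therefore to write the $t$-expansion of $E_r(\ww{s};\bfz)$, where $t$ is the rank-$(r-1)$ Goss parameter attached to $z_1$, explicitly. We expect each coefficient of $t^n$ to be a $K$-linear combination of products of the form
\[
E_{r-1}(\ww{s}';\mathbf{w})\cdot G_{\ww{s}''}(t\text{-data}),
\]
where the first factor is a rank-$(r-1)$ MES and the second is a polynomial in Goss polynomials evaluated at torsion data. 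Such terms arise by splitting the chain $\bff_1\succ\cdots\succ\bff_m$ into the part with first coordinate $0$ and the part with nonzero first coordinate. The key bookkeeping step is to assign each coefficient a well-defined weight, and to show that the coefficient of $t^n$ in $E_r(\ww{s};\bfz)$ lies in $\bigoplus_{w'} \Zcal^{(r-1)}_{K,w'}\otimes(\text{weight-}(\wt(\ww{s})-w')\text{ Goss data})$, homogeneous of total weight $\wt(\ww{s})$.

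To prove (i), suppose $\sum_w F_w=0$ with $F_w\in\Zcal^{(r)}_{\overline{K},w}$. Comparing $t$-expansion coefficients gives, for each $n$, a relation among coefficients of different total weights. We order the coefficients by their leading pieces, for instance through the term of minimal $t$-adic order or a suitable "leading monomial" in the Goss polynomial variables. The induction hypothesis, namely the graded structure for rank $r-1$, combined with the algebraic independence or linear-disjointness needed for the Goss-polynomial factors, then separates the weights. This shows each $F_w$ has all $t$-coefficients zero, hence $F_w=0$ by uniqueness of $t$-expansions. The fact that the first factor $\mathbf{w}$ ranges over $\Omega^{r-1}$ lets us treat the rank-$(r-1)$ MES as functions. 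Accordingly, we must show that the products $E_{r-1}(\cdot)\cdot(\text{Goss data})$ of distinct weights remain independent. We would do this by viewing the Goss data as polynomials in an auxiliary variable (the $z_1$-dependence through the Carlitz/Drinfeld exponential of the rank-$(r-1)$ lattice) with coefficients in $\Zcal^{(r-1)}_{\overline{K}}$.

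For (ii), the same reduction applies, using part (ii) of the induction hypothesis: a $\overline{K}$-linear relation among $K$-independent rank-$r$ MES of weight $w$ yields coefficientwise $\overline{K}$-relations among rank-$(r-1)$ objects defined over $K$. Choosing a $K$-basis and descending, as in the standard Galois-averaging argument (apply $\mathrm{Gal}(\overline{K}/K)$-conjugates and use that the coefficients are $K$-rational combinations), gives the relation over $K$. Surjectivity is clear, and multiplicativity follows from \eqref{eq.q-shuffle-product-MES}.

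The main obstacle is the weight analysis of the $t$-expansion coefficients. Specifically, we must prove that every coefficient is homogeneous of the expected weight with respect to a grading that is compatible with the rank-$(r-1)$ direct sum, and that the non-constant factors introduce no cross-weight cancellation. We would first attempt this by computing the expansion explicitly in depth one and two, and then extending by induction on depth via the shuffle-type recursion of \cite{CCHT25}.
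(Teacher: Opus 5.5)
There is a genuine gap at the step you yourself flag as the main obstacle, and the mechanism you propose for it cannot work. Your overall frame (induction on rank, Goss expansion, uniqueness of $t$-expansions) is right. But once you take the coefficient of $t^n$, nothing depends on $z_1$ any more. Since $t_{\Lambda_{\mathbf w}}(fz_1)=1/\phi_f^{\Lambda_{\mathbf w}}\bigl(t_{\Lambda_{\mathbf w}}(z_1)^{-1}\bigr)$, every Goss factor $G_s^{\Lambda_{\mathbf w}}(t_{\Lambda_{\mathbf w}}(fz_1))$ is a power series in the single variable $t_{\Lambda_{\mathbf w}}(z_1)$. Its coefficients are polynomials in three kinds of functions on $\Omega^{r-1}$:
\begin{itemize}
\item[] the $e_k(\mathbf w)$,
\item[] the coefficients of $\phi_f^{\Lambda_{\mathbf w}}$,
\item[] negative powers of the leading coefficient $\Delta_{r-1}^{(q^{(r-1)d}-1)/(q^{r-1}-1)}$.
\end{itemize}
So no auxiliary variable or ``leading monomial'' is left to exploit. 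Moreover, the linear disjointness between $\Zcal^{(r-1)}_{\ov K}$ and the ``Goss data'' that you would need is false. Already $e_1(\mathbf w)=E^{(r-1)}_{q-1}(\mathbf w)=-E_{r-1}(q-1;\mathbf w)$ is itself a rank-$(r-1)$ MES, in every rank including $r-1=1$, and it enters every $G_s$ with $s>q$. Your bigrading $\bigoplus_{w'}\Zcal^{(r-1)}_{K,w'}\otimes(\text{Goss data of weight }\wt(\ww s)-w')$ therefore does not embed into functions, and the induction hypothesis cannot be applied to it.

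The missing idea is to use this overlap rather than fight it. The coefficients of $\phi_f^{\Lambda_{\mathbf w}}$ and the $e_k(\mathbf w)$ are Drinfeld modular forms over $K$ of weight $q^i-1$. They lie in $K[E^{(r-1)}_{q-1},\ldots,E^{(r-1)}_{q^{r-1}-1}]$ with $E^{(r-1)}_s=-E_{r-1}(s;\cdot)$; in rank one use Euler--Carlitz. So by the $q$-shuffle product, every such form of weight $w$ lies in $\Zcal^{(r-1)}_{K,w}$, and the leading coefficient of $\phi_f$ is a power of $\Delta_{r-1}$. You also need $\ord(t_{\Lambda_{\mathbf w}}(fz_1))\geq q^{(r-1)\deg f}$, so that each coefficient is a finite sum. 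Together these give a single grading: the $t^N$-coefficient of $E_r(\ww s;\bfz)$ equals $\Delta_{r-1}^{-M}F$ with $F\in\Zcal^{(r-1)}_{K,\wt(\ww s)-N+M(q^{r-1}-1)}$.

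Part (i) then follows directly. Multiply the $N$-th coefficient relation by a common $\Delta_{r-1}^M$; this is a uniform weight shift, and $\Delta_{r-1}$ is nowhere vanishing. The weights $w_i-N+M(q^{r-1}-1)$ remain distinct, so the induction hypothesis separates them, and no independence of Goss factors is needed.

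For (ii), Galois averaging is not available: $\mathrm{Gal}(\ov K/K)$ does not act on $\CC_\infty$-valued MES in a way fixing them, already not on $K_\infty$-valued MZVs, which is why rank one needs the ABP criterion. Instead, once the cleared coefficients are known to lie in $\Zcal^{(r-1)}_K$, write $c_i=\sum_j a_{ij}b_j$ with $a_{ij}\in K$ and the $b_j\in\ov K$ linearly independent over $K$. Applying induction hypothesis (ii) coefficientwise gives $\sum_i a_{ij}F_i=0$ for every $j$, and $K$-independence of the $F_i$ finishes the argument.

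Finally, no induction on depth via the shuffle recursion is needed. The Goss expansion already reduces every depth to products of the depth-one factors $G_s^{\Lambda_{\mathbf w}}(t_{\Lambda_{\mathbf w}}(fz_1))$.
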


The proof of our main results is outlined as follows. We proceed by induction
on $r$ to establish \cref{thm.main-thm}, with the base case $r=1$ following
from \cref{thm.Goncharov-Zagier conjecture for function field}. The bridge from MES of rank $r+1$ to MES of rank $r$ is the ``expansion at infinity" in the parameter $t_{\Lambda_{\bfz'}}$.  The key ingredient
of the induction step is to analyze the $t_{\Lambda_{\bfz'}}$-expansion of any
MES of rank $r+1$. We compute its coefficients and show that
each coefficient can be expressed in terms of MES of rank $r$, divided by
a suitable power of the  Drinfeld discriminant form $\Delta_r$ (see \S\ref{sec.t-exp-MES}).

The explicit description mentioned above allows us to determine the ``weights'' of the
coefficients appearing in the $t_{\Lambda_{\bfz'}}$-expansion. Consequently,
any linear relation among MES of rank $r+1$ induces, coefficient by
coefficient, corresponding relations among MES of rank $r$. Using the
uniqueness of the $t_{\Lambda_{\bfz'}}$-expansion together with the induction
hypothesis for MES of rank $r$, we then obtain the desired conclusion for
MES of rank $r+1$, and hence establish \cref{thm.main-thm} (see \S\ref{sec.proof of 1.5} for details).

\begin{rmk}
    After completing the proofs of our main results, we learned that Bachmann and Kanno had posted on arXiv a paper proving \cref{Thm:BK26}. We then observed that their method could be adapted to strengthen our \cref{thm.main-thm} by replacing $\ov{K}$ with $\CC_\infty$ when $r\geq 2$. Precisely, the proof of the first assertion in arbitrary rank and that of the second assertion in rank two follow a strategy similar to that of \cite{BK26}, while the second assertion in arbitrary rank is obtained by combining the rank two case with the induction argument used in the proof of \cref{thm.main-thm}. We refer to Appendix~\ref{appendix} for details.
\end{rmk} 

\section{Preliminaries}\label{sec.preliminary}

In this section, we recall the necessary preliminaries on Drinfeld modular forms, multiple Eisenstein series, and multiple Goss sums, which will be used primarily in the proof of \cref{thm.main-thm} in \S\ref{sec.proof of 1.5}.
\subsection{The Drinfeld symmetric space \texorpdfstring{$\Omega^r$}{Ω\^ r} }

Given a positive integer $r\ge 1$, we identify the Drinfeld symmetric space $\Omega^r$ of rank $r$ with the following subset of $\CC_\infty^r$:
\[
\Omega^r = \{\bfz = (z_1,\ldots,z_r) \in \CC_\infty^r \mid z_1,\ldots,z_r \text{ are $K_\infty$-linearly independent and } z_r=1\}.
\]
It is known from~\cite[Proposition~6.1]{Drinfeld1974} that $\Omega^r$ admits a rigid analytic structure and we denote by $\Ocal(\Omega^r)$ the $\CC_{\infty}$-algebra of rigid analytic functions on $\Omega^r$.

Then for any $\bfz=(z_{1},\ldots,z_{r-1},1)\in \Omega^r$ and $\bff=(f_{1},\ldots,f_{r})\in A^{r}$, we denote by $\langle \bff,\bfz \rangle:=f_{1}z_{1}+\cdots+f_{r-1}z_{r-1}+f_{r}\in \CC_{\infty}$. We put
\[
\Lambda_{\bfz} := \{\langle \bff, \bfz \rangle \mid \bff\in A^r\} \subset \CC_\infty,
\] and note that $\Lambda_{\bfz}$
forms an $A$-lattice of rank $r$ inside $\CC_\infty$.

We now recall the notion of the $t_{\Lamz}$-expansion.
Assume $r\ge 2$.
For $\bfz = (z_1,\ldots,z_{r-1},1)\in \Omega^r$, we put $\bfz' = (z_2,\ldots,z_{r-1},1) \in \Omega^{r-1}$ and consider the function
\begin{equation}\label{eq.t-Lambda}
    t_{\Lamz}(z_1) := \sum_{\lambda \in \Lamz} \frac{1}{z_1 + \lambda} = \exp_{\Lamz}(z_1)^{-1}, 
\end{equation}
where 
\[
\exp_{\Lamz}(z_1) := z_1\prod_{0\neq \lambda \in \Lamz} \left(1-\frac{z_1}{\lambda}\right) 
\]
is the exponential function associated with the rank $r-1$ lattice $\Lamz$ (see \cite[\S4]{goss1996basic}).

Let 
\[
\Gamma_r := \left\{\left(\begin{array}{c|c}
    1 & * \\
    \hline
    0 & I_{r-1}
\end{array}\right)\right\} \subset \GL_r(A)
\]
and denote the subspace of all $\Gamma_r$-invariant functions by $\Ocal(\Omega^r)^{\Gamma_r}$. 
By \cite[Proposition~5.4]{BBP24} (see also \cite{Gek25-DMF-VII}), every $F\in \Ocal(\Omega^r)^{\Gamma_r}$ admits a unique $t_{\Lambda_{\bfz'}}$-expansion:
\[
F(\mathbf{z})=\sum_{n=-\infty}^{\infty}F_n(\mathbf{z}')t_{\Lambda_{\mathbf{z}'}}(z_1)^n,
\]
valid for $\mathbf{z} = (z_1, \ldots, z_{r-1}, 1)\in \Omega^r$ in a neighborhood of infinity.
We further define the order of $F$ at infinity by $\ord_r (F) := \inf \{n \in \ZZ \mid F_n(\bfz') \neq 0 \}\in \mathbb{Z}\cup \{\pm \infty\}$.

\subsection{Multiple Eisenstein series and multiple Goss sums}

To define MES, we first recall the partial order
introduced in \cite{Chen2017,CCHT25}, reformulated here as a partial order
on $A^r$.
\begin{df}\label{df.partial order on Λ_z}
    For every nonzero
    $\bm{f}=(f_1,\ldots,f_r)\in A^r$, we put
    \[
    \ell(\bm{f})
    :=
    \min\{i\in\{1,\ldots,r\}\mid f_i\neq0\}.
    \]
    We define a partial order $\succ$ on $A^r$ as follows.
    For nonzero $\bm{f}\in A^r$, we write $\bm{f}\succ0$ if $f_{\ell(\bm{f})}\in A_+$.
    For $\bm{f},\bm{g}\succ0$, we write
    $\bm{f}\succ\bm{g}$
    if either
    $\ell(\bm{f})<\ell(\bm{g})$,
    or
    \[
    \ell(\bm{f})=\ell(\bm{g})
    \qquad\text{and}\qquad
    \deg f_{\ell(\bm{f})}
    >
    \deg g_{\ell(\bm{g})}.
    \]
\end{df}
 For any $r\geq 1$ and any nonempty index $\ww{s}=(s_1,\ldots,s_m)$, recall that the associated MES of rank $r$ is defined by 
\[
E_r(\ww{s}; \mathbf{z}) = \sum_{\substack{\bm{f}_1,\ldots,\bm{f}_m \in A^r \\  \bm{f}_1\succ \cdots \succ  \bm{f}_m \succ 0}}\frac{1}{\left\langle \bm{f}_1,\bfz\right\rangle^{s_1}\cdots \left\langle \bm{f}_m, \mathbf{z}\right\rangle^{s_m}}.
\]
It was shown in~\cite{CCHT25} that $E_r(\ww{s};\bfz)$ is a rigid analytic function on $\Omega^r$. Moreover, for $r\geq 2$, it is invariant under $\Gamma_r$, and therefore admits a unique $t_{\Lamz}$-expansion.

Given $r\ge 1$ and an $A$-lattice $\Lambda$ of rank $r$ in $\CC_\infty$, we write 
\begin{equation}\label{eq.exponential coeff e_n}
    \exp_{\Lambda}(X):=X\prod_{0\neq \lambda \in \Lambda} \left(1-\frac{X}{\lambda}\right)  = \sum_{n=0}^{\infty} e_n(\Lambda)X^{q^n}.
\end{equation}
For $\bfz\in\Omega^r$, we also put $e_n(\bfz):=e_n(\Lambda_\bfz)$. 
By \cite[Proposition~15.3]{BBP24}, the function $\mathbf{z}\mapsto e_n(\mathbf{z}): \Omega^r\to \mathbb{C}_{\infty}$ is a rigid analytic function on $\Omega^r$.
Hence, we may regard $\exp_{\Lambda_{\mathbf{z}}}(X)$ as a power series with coefficients in $\mathcal{O}(\Omega^r)$.

With the necessary setup, we recall the notion of Goss polynomials:
\begin{df}[Goss polynomials, \cite{Gos1980, Gekeler88}]\label{df.Goss polynomial}
    Fix $r\ge 1$ and an $A$-lattice $\Lambda$ of rank $r$ in $\mathbb{C}_{\infty}$.
    We define the Goss polynomials by the sequence of polynomials $\{G_k^{\Lambda}(X)\}_{k=1}^{\infty}$ in $\mathbb{C}_{\infty}[X]$ satisfying the following recurrence:
    \begin{enumerate}
        \item $G_1^{\Lambda}(X) = X$, and
        \item for an integer $k > 1$, 
        \[
        G_k^{\Lambda}(X) = X\sum_{q^i<k}e_i(\Lambda)G_{k-q^i}^{\Lambda}(X).
        \]
    \end{enumerate}
\end{df}

Since $\mathbf{z}\mapsto e_n(\mathbf{z})$ is a rigid analytic function on $\Omega^r$, the coefficients of $G^{\Lambda_{\mathbf{z}}}_k(X)$ are also rigid analytic functions on $\Omega^r$.
In addition, if $\exp_{\Lambda}(X) \in K[\![X]\!]$, then $G^{\Lambda}_k(X)\in K[X]$ for every integer $k\geq 1$.

The core property of Goss polynomials is that they express the reciprocal power sums over lattices in $\mathbb{C}_\infty$ as polynomials in $t_{\Lambda_{\mathbf z}}(X)$, where $t_{\Lambda_{\mathbf z}}(X)$ is obtained from \eqref{eq.t-Lambda} by substituting $\bfz'\rightarrow\bfz$ and $z_1\rightarrow X$.
\begin{prop}[\cite{Gos1980, Gekeler88}]\label{prop.G_a(t(X)) = sum 1/(X+λ)^a}
    For any integer $k\geq 1$, the following properties hold.
    \begin{enumerate}
        \item $G_k^{\Lambda_{\mathbf{z}}}(t_{\Lambda_{\bfz}}(X)) = \sum_{\lambda\in \Lambda_{\mathbf{z}}}\frac{1}{(X+\lambda)^k}$.
        \item $G_k^{\Lambda_{\mathbf{z}}}(X)$ is a monic polynomial of degree $k$.
        \item $X\mid G_k^{\Lambda_{\mathbf{z}}}(X)$.
        \item if $1\le k\le q$, $G_k^{\Lambda_{\mathbf{z}}}(X) = X^k$.
    \end{enumerate}
\end{prop}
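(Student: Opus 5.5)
The plan is to dispatch the algebraic properties (ii)--(iv) by induction on $k$ from the recurrence in \cref{df.Goss polynomial}, and to prove the analytic identity (i) by comparing generating functions in an auxiliary variable $u$. Throughout, write $\Lambda=\Lambda_{\bfz}$, $G_k=G_k^{\Lambda}$, $t=t_{\Lambda}$ and $e_i=e_i(\Lambda)$. Note that $e_0=1$, since $\exp_\Lambda(X)=X+O(X^2)$.

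For (ii)--(iv), I induct on $k$; all three hold for $G_1=X$.
\begin{itemize}
\item[(iii)] Every $G_k$ with $k>1$ carries an explicit factor $X$ in the recurrence, so $X\mid G_k$.
\item[(ii)] By induction, $\deg G_{k-q^i}=k-q^i$. Hence in $X\sum_{q^i<k}e_iG_{k-q^i}$ the $i=0$ term $X\cdot G_{k-1}$ has degree $k$ and is monic. Every other term has degree $k-q^i+1<k$, since $q^i\geq q\geq 2$.
\item[(iv)] If $k\le q$, the only index with $q^i<k$ is $i=0$. So $G_k=XG_{k-1}$, and induction gives $G_k=X^k$.
\end{itemize}

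For (i), I will encode the recurrence in the formal series $F(X,u):=\sum_{k\ge1}G_k(X)u^k$, with the convention $G_0=0$. The recurrence together with $G_1=X$ is equivalent to the single identity
\[
F = Xu + X\exp_\Lambda(u)\,F .
\]
To check this, compare coefficients of $u^k$: the term $X\exp_\Lambda(u)F$ contributes $X\sum_i e_iG_{k-q^i}$, and the term $Xu$ supplies $G_1=X$. It follows that
\[
F=\frac{Xu}{1-X\exp_\Lambda(u)}
\]
as formal power series in $u$ over $\CC_\infty[X]$.

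On the analytic side, fix $x\in\CC_\infty\setminus\Lambda$ and take $|u|$ small. Expanding geometrically gives
\[
\sum_{k\ge1}\Big(\sum_{\lambda}\frac{1}{(x+\lambda)^k}\Big)u^{k}=\sum_\lambda \frac{u}{x+\lambda-u}=u\,t(x-u).
\]
The interchange of sums must be justified for $|u|<\min_\lambda|x+\lambda|$, and I expect this to be the only point requiring care. It should follow from the discreteness of $\Lambda$ and $|x+\lambda|\to\infty$, which give uniform convergence in the non-archimedean setting.

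Now I use the $\FF_q$-linearity of $\exp_\Lambda$:
\[
u\,t(x-u)=\frac{u}{\exp_\Lambda(x)-\exp_\Lambda(u)}=\frac{t(x)\,u}{1-t(x)\exp_\Lambda(u)}=F(t(x),u).
\]
Comparing coefficients of $u^k$, which is legitimate because both sides are convergent power series in $u$ near $0$, yields $G_k(t(x))=\sum_\lambda (x+\lambda)^{-k}$. This is exactly (i).
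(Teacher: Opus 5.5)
Your proof is correct. The paper gives no argument of its own (it cites Goss and Gekeler), and your route is essentially Gekeler's standard proof: (ii)--(iv) by induction on the recurrence, and (i) by matching $\sum_{k\geq 1}G_k(X)u^k = Xu/(1-X\exp_\Lambda(u))$ against $u\,t(x-u)=u/(\exp_\Lambda(x)-\exp_\Lambda(u))$.

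The one step you leave implicit is that the formal expansion of $t(x)u/(1-t(x)\exp_\Lambda(u))$ converges to this function near $u=0$. You can avoid it: multiply your convergent series $\sum_{k\geq1} a_k u^k$, with $a_k=\sum_{\lambda}(x+\lambda)^{-k}$, by $1-t(x)\exp_\Lambda(u)$, and read off that the $a_k$ satisfy the same recurrence as the $G_k(t(x))$, with $a_1=t(x)$.
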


Using Goss polynomials, we have the notion of multiple Goss sums (abbreviated as MGS) introduced in~\cite{CCHT25}.
\begin{df}[Multiple Goss sums]\label{df.multiple Goss sums}
    For any $r\geq 2$ and nonempty index $\ww{s} = (s_1,\ldots,s_m)$, we define the multiple Goss sum of rank $r$ by 
    \[
    G_r(\ww{s}; \mathbf{z}) = \sum_{\substack{f_1,\ldots,f_m\in A_+ \\ \deg f_1 > \cdots > \deg f_m}}G_{s_1}^{\Lamz}(t_{\Lamz}(f_1z_1))\cdots G_{s_m}^{\Lamz}(t_{\Lamz}(f_mz_1)).
    \]
    By convention, we put $G_r(\varnothing;\bfz)=1$.
\end{df}

We also recall the notations $\ww{s}^{(i)}$ and $\ww{s}_{(i)}$.
The former one is the index obtained from $\ww{s}$ by deleting the first $i$ entries and the latter one is the index obtained from $\ww{s}$ by retaining the first $i$ entries.
The precise definitions are given as follows.
\begin{df}\label{df.a^(i) and a_(i)}
    For a nonempty index $\ww{s}=(s_1,\ldots,s_m)$ and $i\geq0$, we let
    \[
    \ww{s}^{(i)}
    =
    \begin{cases}
    \ww{s}, & i=0,\\
    (s_{i+1},\ldots,s_m), & 1\leq i<m,\\
    \emptyset, & i\geq m,
    \end{cases}
    \qquad\text{and}\qquad
    \ww{s}_{(i)}
    =
    \begin{cases}
    \emptyset, & i=0,\\
    (s_1,\ldots,s_i), & 1\leq i<m,\\
    \ww{s}, & i\geq m.
    \end{cases}
    \]
\end{df}

Using \cref{prop.G_a(t(X)) = sum 1/(X+λ)^a} and \cref{df.multiple Eisenstein series}, we obtain the Goss expansion of MES:
\begin{prop}[{\cite[Proposition 2.3]{CCHT25}}]\label{prop.Goss expansion of MES}
    For any integer $r\ge 2$ and any index $\ww{s}$, we have 
    \[
    E_r(\ww{s}; \mathbf{z}) = E_{r-1}(\ww{s}; \mathbf{z}') + \sum_{i=1}^{\dep(\ww{s})}E_{r-1}(\ww{s}^{(i)}; \mathbf{z}')G_r(\ww{s}_{(i)}; \mathbf{z}).
    \]
\end{prop}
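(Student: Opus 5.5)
The plan is to split the sum defining $E_r(\ww{s};\bfz)$ according to how many of the vectors $\bff_1,\ldots,\bff_m$ have nonzero first coordinate. By \cref{df.partial order on Λ_z}, a vector $\bff\succ 0$ with $\ell(\bff)=1$ is bigger than every vector with $\ell(\bff)\ge 2$. So in any chain $\bff_1\succ\cdots\succ\bff_m\succ0$ there is a unique $i\in\{0,\ldots,m\}$ such that $\ell(\bff_j)=1$ for $j\le i$ and $\ell(\bff_j)\ge 2$ for $j>i$. The tail $(\bff_{i+1},\ldots,\bff_m)$ is then a chain of vectors of the form $(0,\bm{g})$ with $\bm{g}\in A^{r-1}$. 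The order $\succ$ on such vectors is exactly the order on $A^{r-1}$, and $\langle (0,\bm{g}),\bfz\rangle=\langle \bm{g},\bfz'\rangle$. Hence, after Fubini, the tail sum with exponents $\ww{s}^{(i)}$ is precisely $E_{r-1}(\ww{s}^{(i)};\bfz')$. This uses the convention $E_{r-1}(\emptyset;\bfz')=1$, and the case $i=0$ gives the leading term $E_{r-1}(\ww{s};\bfz')$.

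For the head, write each $\bff_j$ with $j\le i$ as $(f_j,\bm{h}_j)$ with $f_j\in A_+$ and $\bm{h}_j\in A^{r-1}$ arbitrary. Then $\langle\bff_j,\bfz\rangle=f_jz_1+\lambda_j$ with $\lambda_j=\langle\bm{h}_j,\bfz'\rangle$ ranging freely over $\Lamz$. The constraint $\bff_1\succ\cdots\succ\bff_i$ becomes $\deg f_1>\cdots>\deg f_i$, with no condition on the $\lambda_j$. Crucially, none of these head constraints couple to the tail, since the tail vectors automatically lie below all head vectors. Summing each $\lambda_j$ over $\Lamz$ and applying \cref{prop.G_a(t(X)) = sum 1/(X+λ)^a}(i) with $X=f_jz_1$ gives $G^{\Lamz}_{s_j}(t_{\Lamz}(f_jz_1))$. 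The head sum is therefore $G_r(\ww{s}_{(i)};\bfz)$ by \cref{df.multiple Goss sums}. Summing over $i$ yields the identity.

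The main obstacle is justifying the rearrangement: splitting into pieces, iterating the sums over $\lambda_j$ first, and factoring head and tail. I would handle this through absolute convergence in the non-archimedean sense. For fixed $\bfz\in\Omega^r$, the $K_\infty$-linear independence of $z_1,\ldots,z_{r-1},1$ gives a bound $|\langle\bff,\bfz\rangle|\ge c\max_k|f_k|$ for some $c>0$. Hence the terms tend to $0$ along any enumeration, and there are only finitely many terms above any threshold, since each $s_j\ge1$ and each chain element is bounded below in size. So the family is summable, and any regrouping into subsums gives the same value, as is standard for summable families in complete non-archimedean fields.

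This also makes each inner lattice sum $\sum_{\lambda}(f_jz_1+\lambda)^{-s_j}$ convergent. It is legitimate because $f_jz_1\notin\Lamz$, again by linear independence. The empty-index cases are trivially consistent with the conventions $E_r(\emptyset;\bfz)=G_r(\emptyset;\bfz)=1$.
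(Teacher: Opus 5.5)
Your proposal is correct and follows essentially the same route as the paper, which cites \cite[Proposition~2.3]{CCHT25} and derives the identity directly from the definition of $E_r(\ww{s};\bfz)$: split each chain $\bff_1\succ\cdots\succ\bff_m\succ0$ according to the initial segment of vectors with $\ell(\bff_j)=1$, identify the tail with $E_{r-1}(\ww{s}^{(i)};\bfz')$, and evaluate the head lattice sums over $\Lamz$ via $G_k^{\Lamz}(t_{\Lamz}(X))=\sum_{\lambda\in\Lamz}(X+\lambda)^{-k}$. Your non-archimedean summability argument supplies the justification for the rearrangement, which the paper leaves implicit.
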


The authors of \cite{CCHT26} computed the $t_{\Lambda_{\mathbf{z}'}}$-expansion of $t_{\Lambda_{\mathbf{z}'}}(f_{1}z_{1})$ and  established the following lower bound (see the proof of \cite[Lemma~2.9]{CCHT26}):
\begin{equation}\label{eq.ord_r(t(fz))}
    \ord_r(t_{\Lamz}(fz_1)) \ge q^{(r-1)\deg f}.
\end{equation}
Furthermore, by using \eqref{eq.ord_r(t(fz))}, it is shown in~\cite[Proposition 2.10]{CCHT26} that the constant term of the $t_{\Lambda_{\bfz'}}$-expansion of $E_r(\ww{s}; \mathbf{z})$ is $E_{r-1}(\ww{s}; \mathbf{z}')$.

\subsection{Drinfeld modular forms}
Throughout this paper, the term ``Drinfeld modular form of rank $r$'' refers
to a Drinfeld modular form of type $0$ for $\GL_r(A)$. For any integer $r\geq 2$ and integer $w\geq 0$, we denote by
$\mathscr{M}^{(r)}_w$
the $\CC_\infty$-vector space of Drinfeld modular forms of rank $r$ and
weight $w$, and by $\mathscr{M}^{(r)}$ the $\CC_\infty$-algebra of Drinfeld modular forms of rank $r$. Then 
\[
\mathscr{M}^{(r)}
=
\bigoplus_{w\geq 0}\mathscr{M}^{(r)}_w
\]
forms a graded $\CC_\infty$-algebra (for the definition of Drinfeld modular forms and further details, see \cite{BBP24}).

Now let $r\geq1$ and let $\mathbb{C}_{\infty}[\tau]$ denote the twisted polynomial ring
determined by the relation
$\tau c=c^q\tau$ for all $c\in\CC_\infty$. By the Drinfeld uniformization theorem
(see \cite[\S4.3]{goss1996basic}),
for each $\mathbf{z}\in\Omega^r$, the $A$-lattice
$\Lambda_{\mathbf{z}}$ determines a rank $r$ Drinfeld $A$-module
\[
\phi^{\Lambda_{\mathbf{z}}}:
A\longrightarrow\mathbb{C}_{\infty}[\tau],
\qquad
a\longmapsto\phi_a^{\Lambda_{\mathbf{z}}},
\]
characterized by
\[
\exp_{\Lambda_{\mathbf{z}}}(aX)
=
\phi_a^{\Lambda_{\mathbf{z}}}
(\exp_{\Lambda_{\mathbf{z}}}(X))
\]
for all $a\in A$. In particular,
\begin{equation}\label{eq.exp linearity}
\exp_{\Lambda_{\mathbf{z}}}(\theta X)
=
\phi_{\theta}^{\Lambda_{\mathbf{z}}}
(\exp_{\Lambda_{\mathbf{z}}}(X)).
\end{equation}
Now, write 
\[
\phi^{\Lambda_{\mathbf{z}}}_{\theta} = \theta + g_1(\mathbf{z})\tau + \cdots + g_{r-1}(\mathbf{z})\tau^{r-1} + \Delta_r(\mathbf{z})\tau^r.
\]
Notice that $\Delta_r(\bfz)$ is nowhere vanishing. 

In analogy with the classical setting, where the coefficients of the Weierstrass equation of an elliptic curve associated with a lattice generate the ring of modular forms for $\SL_2(\ZZ)$, we have the following results due to \cite{BBP24}.
\begin{prop}\label{prop.ring of modular forms}
    For any integer $r\geq2$, the following assertions hold.
    \begin{enumerate}
        \item (\cite[Proposition~15.12 and Proposition~16.3]{BBP24}) For $i=1,\ldots,r-1$, $g_i$ is a Drinfeld modular form of
        weight $q^i-1$, and $\Delta_r$ is a Drinfeld cusp form of weight
        $q^r-1$.
    
        \item (\cite[Theorem~17.5(a)]{BBP24}) We have
        \[
        \mathscr{M}^{(r)}
        =
        \CC_\infty[g_1,\ldots,g_{r-1},\Delta_r],
        \]
        and $g_1,\ldots,g_{r-1},\Delta_r$ are algebraically independent
        over $\CC_\infty$.
    
        \item (\cite[Theorem~17.5(a)]{BBP24}) We have
        \[
        \mathscr{M}^{(r)}
        =
        \CC_\infty[
            E_{q-1}^{(r)},\ldots,E_{q^r-1}^{(r)}
        ],
        \]
        and $E_{q-1}^{(r)},\ldots,E_{q^r-1}^{(r)}$ are algebraically
        independent over $\CC_\infty$, where, for $s\geq 1$ with
        $q-1\mid s$,
        \begin{equation}\label{eq.single Eisenstein series of weight a}
        E_s^{(r)}(\bfz)
        =
        \sum_{0\neq\lambda\in\Lambda_{\bfz}}
        \frac{1}{\lambda^s}
        \end{equation}
        denotes the single Eisenstein series of rank $r$ and weight $s$.
    
        \item (\cite[Proposition~15.3]{BBP24}) For every $n\geq0$, $e_n(\bfz)$ is a Drinfeld modular form of weight $q^n-1$. 
    
        \item (\cite[(15.1) and~(15.6)]{BBP24})  We have
        \[
        K[g_1,\ldots,g_{r-1},\Delta_r]
        =K[e_i\mid i\geq 1]=
        K[E_{q-1}^{(r)},\ldots,E_{q^r-1}^{(r)}].
        \]
    \end{enumerate}
\end{prop}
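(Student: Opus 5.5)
Most of the proposition is quoted from \cite{BBP24}, so my plan is to invoke those results for (i)--(iv) and to give a self-contained argument only for the field-of-definition statement (v), which follows formally from two functional equations. For (i) and (iv), the modularity comes from the homogeneity $\Lambda_{\gamma\bfz}=j(\gamma,\bfz)^{-1}\Lambda_{\bfz}$. Under $\Lambda\mapsto c\Lambda$ one has $e_n(c\Lambda)=c^{1-q^n}e_n(\Lambda)$, and $\phi^{c\Lambda}_\theta=c\,\phi^{\Lambda}_\theta c^{-1}$, so $g_i(c\Lambda)=c^{1-q^i}g_i(\Lambda)$. Holomorphy at infinity is read off from the $t_{\Lamz}$-expansion, and the cuspidality of $\Delta_r$ from the fact that its constant term corresponds to the degenerate rank $r-1$ module. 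The structure theorems (ii) and (iii), namely generation and algebraic independence, are the deep input. I will simply cite \cite[Theorem~17.5(a)]{BBP24}, where they come from a dimension count on the Satake compactification against the Hilbert series of the weighted polynomial ring. This is the step I regard as the main obstacle, and I would not try to reprove it.

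For (v), I first compare $\exp_{\Lambda_\bfz}(\theta X)=\phi^{\Lambda_\bfz}_\theta(\exp_{\Lambda_\bfz}(X))$ coefficientwise at $X^{q^n}$. Setting $g_r:=\Delta_r$ and $e_0=1$, this gives
\[
(\theta^{q^n}-\theta)\,e_n=\sum_{i=1}^{\min(n,r)} g_i\, e_{n-i}^{q^i}.
\]
By induction on $n$, this shows $e_n\in K[g_1,\ldots,g_{r-1},\Delta_r]$ for all $n$. Conversely, taking $n=1,\ldots,r$ and using $e_0=1$, I solve successively $g_n=(\theta^{q^n}-\theta)e_n-\sum_{i<n}g_ie_{n-i}^{q^i}$, so each $g_n$ (including $\Delta_r$) lies in $K[e_i\mid i\ge1]$. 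This proves the first equality.

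For the second equality I use the logarithm. Expanding $\sum_{\lambda}1/(X+\lambda)$, or equivalently differentiating $\log\exp_{\Lambda}$, gives $\log_{\Lambda}(X)=\sum_{n\ge0}l_nX^{q^n}$ with $l_0=1$ and $l_n=-E^{(r)}_{q^n-1}$ for $n\geq 1$. The identity $\log_\Lambda(\phi_\theta(X))=\theta\log_\Lambda(X)$ yields
\[
(\theta-\theta^{q^n})\,l_n=\sum_{i=1}^{\min(n,r)} l_{n-i}\,g_i^{q^{n-i}}.
\]
As before, this shows every $l_n$ lies in $K[g_1,\ldots,\Delta_r]$. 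Inverting the relations for $n=1,\ldots,r$, using $l_0=1$ and $\theta^{q^n}\neq\theta$, gives $g_n$ as a polynomial over $K$ in $l_1,\ldots,l_n$. Hence $K[g_1,\ldots,g_{r-1},\Delta_r]=K[E^{(r)}_{q-1},\ldots,E^{(r)}_{q^r-1}]$, which completes (v).
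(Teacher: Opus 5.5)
Your proposal is correct and matches the paper, which gives no argument of its own and cites \cite{BBP24} for all five items; your derivation of (v) from the functional equations $\exp_\Lambda(\theta X)=\phi_\theta(\exp_\Lambda X)$ and $\log_\Lambda(\phi_\theta X)=\theta\log_\Lambda X$ is the standard argument behind the identities the paper cites there. The one soft spot is the justification of $l_n=-E^{(r)}_{q^n-1}$: expanding $\sum_{\lambda}1/(X+\lambda)$, i.e.\ taking the logarithmic derivative of $\exp_\Lambda$, gives the Laurent coefficients of $t_\Lambda=1/\exp_\Lambda$, not those of $\log_\Lambda$, so one more step is needed. For example, use the formal residue identity $l_n=\operatorname{res}_{z=0}\,z\,t_\Lambda(z)^{q^n+1}\,dz$ (substitute $X=\exp_\Lambda(z)$ and use $\exp_\Lambda'=1$) together with $t_\Lambda(z)^{q^n}=z^{-q^n}-\sum_{k\ge1}(E^{(r)}_k)^{q^n}z^{(k-1)q^n}$, or simply cite Gekeler.
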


For any integer $r\geq 2$ and any subfield $L$ of $\CC_\infty$ containing $K$, we put
\[
\mathscr{M}^{(r)}_L
:=
L[g_1,\ldots,g_{r-1},\Delta_r]
\subseteq
\mathscr{M}^{(r)},
\]
and denote by $\mathscr{M}^{(r)}_{L,w}$ its weight $w$ homogeneous
component. The following lemma shows that every element of
$\mathscr{M}^{(r)}_{L,w}$ can be expressed as an $L$-linear combination
of MES of weight $w$.

\begin{lem}\label{lem:DMF-is-MES}
    For any integer $r\geq 2$ and any subfield $L$ of $\CC_\infty$ containing $K$, we have
    \[
    \mathscr{M}^{(r)}_L=L[g_1,\ldots,g_{r-1},\Delta_r]
    \subset
    \Zcal_{L}^{(r)}.
    \]
    Moreover, if
    $F\in L[g_1,\ldots,g_{r-1},\Delta_r]$
    is a Drinfeld modular form of weight $w$, then
    \[
    F\in \Zcal_{L,w}^{(r)}.
    \]
\end{lem}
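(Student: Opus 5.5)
We reduce the lemma to the single Eisenstein series by way of \cref{prop.ring of modular forms}(v). Since $\mathscr{M}^{(r)}_L=L[g_1,\ldots,g_{r-1},\Delta_r]=L[E_{q-1}^{(r)},\ldots,E_{q^r-1}^{(r)}]$, every $F\in\mathscr{M}^{(r)}_L$ is an $L$-polynomial in the $E_{q^i-1}^{(r)}$. This description is compatible with weights, because each generator is homogeneous: $E_{q^i-1}^{(r)}$ has weight $q^i-1$.

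\textbf{Step 1: homogeneous pieces lie in $L$-polynomials of the right weight.} Let $F$ have weight $w$ and write $F$ as an $L$-polynomial $P(E_{q-1}^{(r)},\ldots,E_{q^r-1}^{(r)})$. Decompose $P=\sum_{w'}P_{w'}$ into parts that are isobaric of weight $w'$ when $E_{q^i-1}^{(r)}$ is given weight $q^i-1$. By the graded decomposition $\mathscr{M}^{(r)}=\bigoplus_{w'}\mathscr{M}^{(r)}_{w'}$ and the algebraic independence in \cref{prop.ring of modular forms}(iii), $F=P_w(E_{q-1}^{(r)},\ldots)$; the other components vanish. Thus it suffices to show two things: each monomial $\prod_i (E_{q^i-1}^{(r)})^{a_i}$ of weight $w=\sum a_i(q^i-1)$ lies in $\Zcal^{(r)}_{L,w}$, and more generally $\mathscr{M}^{(r)}_L\subset\Zcal^{(r)}_L$.

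\textbf{Step 2: single Eisenstein series are depth-one MES.} Every nonzero $\lambda\in\Lambda_{\bfz}$ is uniquely $\epsilon\langle\bff,\bfz\rangle$ with $\epsilon\in\FF_q^\times$ and $\bff\succ0$. Since $q-1\mid s$, we have $\epsilon^{s}=1$, so
\[
E_s^{(r)}(\bfz)=\sum_{\epsilon\in\FF_q^\times}\sum_{\bff\succ0}\frac{1}{\langle\bff,\bfz\rangle^{s}}=(q-1)E_r((s);\bfz)=-E_r((s);\bfz).
\]
Hence $E_{q^i-1}^{(r)}\in\Zcal^{(r)}_{K,q^i-1}$. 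The absolute convergence needed to rearrange is standard for $s\geq1$ on lattices, and the ordering-by-degree sum defining $E_r$ converges to the same value.

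\textbf{Step 3: closure under products.} By the $q$-shuffle product \eqref{eq.q-shuffle-product-MES}, $\Zcal^{(r)}_{L,w_1}\cdot\Zcal^{(r)}_{L,w_2}\subset\Zcal^{(r)}_{L,w_1+w_2}$. Iterating, every monomial of weight $w$ in the $E_{q^i-1}^{(r)}$ lies in $\Zcal^{(r)}_{L,w}$, and $L$-linear combinations of such monomials stay in that space. Combined with Step 1, this gives both assertions. The constant $1$ (weight $0$) is $E_r(\emptyset;\bfz)$.

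The only delicate point is Step 1. One must invoke \cref{prop.ring of modular forms}(v) over $K$ and then base change to $L$; this is harmless because $K\subset L$ and the equality of $K$-algebras extends by tensoring. One must also use the algebraic independence in \cref{prop.ring of modular forms}(iii) together with the fact that the weight grading on $\mathscr{M}^{(r)}$ is direct, so that a weight-$w$ form equals its isobaric weight-$w$ part. Step 2's sign and rearrangement are routine.
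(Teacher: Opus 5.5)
Your proposal is correct and is essentially the paper's proof: identify $E_s^{(r)}=-E_r(s;\bfz)$ (using $\epsilon^s=1$ for $\epsilon\in\FF_q^\times$ and $q-1=-1$), rewrite $L[g_1,\ldots,g_{r-1},\Delta_r]$ as $L[E_{q-1}^{(r)},\ldots,E_{q^r-1}^{(r)}]$ via \cref{prop.ring of modular forms}, and close up under the $q$-shuffle product \eqref{eq.q-shuffle-product-MES}. One small correction in Step~2: the rearrangement is justified because a non-archimedean series whose terms tend to $0$ converges unconditionally, not by absolute convergence, which actually fails here since the real series $\sum_{0\neq\lambda\in\Lambda_{\bfz}}|\lambda|^{-s}$ diverges for $s\leq r$.
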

\begin{proof}
    By \eqref{eq.single Eisenstein series of weight a}, for every
    $s\geq1$ with $q-1\mid s$, one checks that
    \[
    E_s^{(r)}(\bfz)=-E_r(s;\bfz),
    \]
    and hence
    \[
    E_s^{(r)}\in\Zcal_{L,s}^{(r)}.
    \]
    
    By \cref{prop.ring of modular forms}, we have an equality of graded
    $L$-algebras
    \[
    L[g_1,\ldots,g_{r-1},\Delta_r]
    =
    L[E_{q-1}^{(r)},\ldots,E_{q^r-1}^{(r)}].
    \]
    Thus, if $F\in\mathscr{M}^{(r)}_{L,w}$, then $F$ is an $L$-linear
    combination of monomials in
    $E_{q-1}^{(r)},\ldots,E_{q^r-1}^{(r)}$ of total weight $w$.
    Since each $E_{q^i-1}^{(r)}$ belongs to
    $\Zcal_{L,q^i-1}^{(r)}$, it follows from \eqref{eq.q-shuffle-product-MES} that
    \[
    F\in\Zcal_{L,w}^{(r)}.
    \]
    The first assertion follows by summing over all weights.
\end{proof}

\begin{rmk}\label{rmk:rank 1}
We fix a $(q-1)$-st root of $(-\theta)$ and let
\[
\wtd{\pi}
=
(-\theta)^{\frac{q}{q-1}}
\prod_{i=1}^{\infty}
\left(1-\frac{\theta}{\theta^{q^i}}\right)^{-1}
\in
K_{\infty}((-\theta)^{\frac{1}{q-1}})
\]
be a fixed fundamental period of the Carlitz $A$-module, namely,
$\phi_{\theta}^{\Lambda_{\wtd{\pi}A}}=\theta+\tau$
(see \cite[\S3.2]{goss1996basic}). When $r=1$, we have the following.
\begin{enumerate}
    \item The ``rank one discriminant form'' $\Delta_1$ is given by
    $\Delta_1=\wtd{\pi}^{q-1}$ since the Drinfeld $A$-module $\phi^A$
    associated with the point $1\in\Omega^1$ is determined by
    \[
    \phi^A_\theta=\theta+\wtd{\pi}^{q-1}\tau.
    \]
    By the Euler--Carlitz formula \cite{Car35} (see also \cite[Theorem~5.2.1]{thakur2004function}),
    $\Delta_1$ is a $K$-multiple of $\zeta_A(q-1)$. Hence
    \[
    \Delta_1^m\in\Zcal^{(1)}_{K,m(q-1)}
    \qquad (m\geq0),\qquad \text{and}\qquad
    L[\Delta_1]\subseteq\Zcal^{(1)}_L
    \]
    for any subfield $L$ of $\CC_\infty$ containing $K$.

    \item For every $n\geq0$, the Carlitz exponential formula
    (see \cite[\S3.2]{goss1996basic}) gives
    \[
    e_n(1)
    =
    \frac{\wtd{\pi}^{q^n-1}}{D_n}
    =
    \frac{1}{D_n}\Delta_1^{(q^n-1)/(q-1)}
    \in
    \Zcal^{(1)}_{K,q^n-1},
    \]
    where $D_0=1$ and $D_n = \prod_{j=0}^{n-1} (\theta^{q^n} - \theta^{q^j})$.
\end{enumerate}
\end{rmk}

\section{Proof of \texorpdfstring{\cref{thm.main-thm}}{Theorem 1.5}}\label{sec.proof of 1.5}

In this section, we prove \cref{thm.main-thm}.
In \S\ref{sec.t-exp-MES}, we first compute the coefficients in the $t_{\Lamz}$-expansion of MES.
Our main goal is to determine the precise ``weight'' carried by each coefficient in the $t_{\Lamz}$-expansion of MES (see \cref{thm:t-expansion-of-MES}).
This enables us to prove \cref{thm.main-thm}~(i) and (ii) in \S\ref{sec.proof of 1.5-(i)} and \S\ref{sec.proof of 1.5(ii)}, respectively.

\subsection{\texorpdfstring{$t$}{t}-expansion coefficients of multiple Eisenstein series}\label{sec.t-exp-MES}

Fix $r\geq1$ and a subfield $L$ of $\CC_\infty$ containing $K$.
Throughout this section, we always write
\[
\bfz=(z_1,\bfz')\in\Omega^{r+1},
\qquad
\bfz'=(z_2,\ldots,z_r,1)\in\Omega^r.
\]
We study the coefficients occurring in the
$t_{\Lambda_{\bfz'}}$-expansion of MES of rank $r+1$. Note that the natural projection
\[
\Omega^{r+1}\longrightarrow\Omega^r,
\qquad
(z_1,\bfz')\longmapsto\bfz',
\]
induces an injective $\CC_\infty$-algebra homomorphism
\[
\Ocal(\Omega^r)\hookrightarrow\Ocal(\Omega^{r+1}),
\]
and we identify $\Ocal(\Omega^r)$ with its image under this map. 

We first consider the localization
\[
\Zcal_{L}^{(r)}[\Delta_r^{-1}] = \sum_{w\in \ZZ} \Lcal_{L,w}^{(r)},
\]
where
\[
\Lcal_{L,w}^{(r)} := \left\{ \frac{F}{\Delta_r^M} \;\middle|\; M\ge 0 \text{ and } F\in \Zcal_{L, w + M(q^r-1)}^{(r)} \text{ with }w+M(q^r-1)\geq 0 \right\}.
\]
Notice that by \eqref{eq.q-shuffle-product-MES}, for any integers $w_1,w_2$,
\[
\Lcal^{(r)}_{L,w_1}\cdot\Lcal^{(r)}_{L,w_2}\subseteq \Lcal^{(r)}_{L,w_1+w_2}.
\]

\begin{lem}\label{lem:coefficient-form-f}
    Fix an integer $r\ge 1$ and denote by $\bfw$ an arbitrary point varying in $\Omega^r$.
    For $f\in A_+$ with $\deg f=d$, write
    \[
    \phi_f^{\Lambda_{\mathbf{w}}}(X)
    =
    \sum_{i=0}^{rd}g_{f,i}(\mathbf{w})X^{q^i}
    \in\Ocal(\Omega^r)[X].
    \]
    Then
    \[
    g_{f,i}\in K[g_1,\ldots,g_{r-1},\Delta_r]
    \]
    is of weight $q^i-1$ for every $0\leq i\leq rd$. Moreover,
    \[
    g_{f,rd}
    =\Delta_r^{(q^{rd}-1)/(q^r-1)}.
    \]
\end{lem}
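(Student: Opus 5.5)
Since $\phi^{\Lambda_{\bfw}}$ is an $\FF_q$-algebra homomorphism $A\to\Ocal(\Omega^r)[\tau]$, we have $\phi_f^{\Lambda_{\bfw}}=f(\phi_\theta^{\Lambda_{\bfw}})$ for $f=\sum_{j=0}^d c_j\theta^j$ with $c_j\in\FF_q$ and $c_d=1$. Here
\[
\phi_\theta^{\Lambda_{\bfw}}=\theta+g_1\tau+\cdots+g_{r-1}\tau^{r-1}+\Delta_r\tau^r ,
\]
and, following \cref{rmk:rank 1}(i), we use the conventions $g_0:=\theta$ and, when $r=1$, $\Delta_1=\wtd{\pi}^{q-1}$. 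The plan is to expand the powers $(\phi_\theta^{\Lambda_{\bfw}})^j$ in the twisted polynomial ring using the rule $\tau c=c^q\tau$. We then read off the coefficients and track weights by assigning $g_k$ weight $q^k-1$ for $0\le k\le r$, with $g_r:=\Delta_r$ and $g_0=\theta$ of weight $0$.

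\textbf{Step 1 (membership and weight).} A direct induction on $j$ shows that the coefficient of $\tau^i$ in $(\phi_\theta^{\Lambda_{\bfw}})^j$ is a sum of products
\[
g_{k_1}\,g_{k_2}^{q^{k_1}}\,g_{k_3}^{q^{k_1+k_2}}\cdots g_{k_j}^{q^{k_1+\cdots+k_{j-1}}},
\qquad k_1+\cdots+k_j=i,\quad 0\le k_\ell\le r .
\]
Such a product has weight
\[
\sum_\ell q^{k_1+\cdots+k_{\ell-1}}\bigl(q^{k_\ell}-1\bigr)=q^{i}-1,
\]
because the sum telescopes. Each summand therefore lies in $K[g_1,\ldots,g_{r-1},\Delta_r]$ and is homogeneous of weight $q^i-1$. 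Taking the $\FF_q$-linear combination over $j$ gives $g_{f,i}\in K[g_1,\ldots,g_{r-1},\Delta_r]$ of weight $q^i-1$.

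An alternative route avoids the explicit expansion. Induct on $d$ via $\phi_{\theta f}=\phi_\theta\phi_f$ together with $\phi_{f+c}=\phi_f+c$. Weights remain homogeneous because the modular transformation $g_k(\gamma\bfw)=(\text{automorphy factor})^{q^k-1}g_k(\bfw)$ is compatible with $q$-th powers: a coefficient of weight $q^a-1$ raised to the power $q^b$ and multiplied by a coefficient of weight $q^b-1$ gives weight $q^{a+b}-1$.

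\textbf{Step 2 (top coefficient).} The degree of $\phi_f^{\Lambda_{\bfw}}$ in $\tau$ is $rd$, and only the term $(\phi_\theta^{\Lambda_{\bfw}})^d$ with all $k_\ell=r$ contributes to $\tau^{rd}$. That contribution is
\[
\Delta_r^{1+q^r+q^{2r}+\cdots+q^{(d-1)r}}=\Delta_r^{(q^{rd}-1)/(q^r-1)},
\]
since $c_d=1$.

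\textbf{Main obstacle and case $r=1$.} The only delicate point is the rank-one case $r=1$. There $K[g_1,\ldots,g_{r-1},\Delta_r]$ should be read as $K[\Delta_1]$, and ``weight'' means the grading by powers of $\Delta_1$, with $\Delta_1$ in degree $q-1$. The same computation applies verbatim with $\phi_\theta=\theta+\Delta_1\tau$.

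We also need the grading to be well defined. For $r\ge2$ this follows from the algebraic independence in \cref{prop.ring of modular forms}. For $r=1$ it follows from the transcendence of $\wtd{\pi}$, so that the powers of $\Delta_1$ are linearly independent over $K$. Beyond this, the argument is a routine bookkeeping of the twisted multiplication.
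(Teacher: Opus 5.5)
Your proof is correct. In rank one it is the paper's own argument: the paper uses only that $\phi^A$ is an $\FF_q$-algebra homomorphism together with $\phi^A_\theta=\theta+\Delta_1\tau$ from \cref{rmk:rank 1}.

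For $r\ge 2$ the routes differ. The paper does no computation and simply cites \cite[Propositions~15.12 and~16.4]{BBP24}. You instead expand $f(\phi_\theta^{\Lambda_{\bfw}})$ in the twisted polynomial ring and count weights with the telescoping identity $\sum_\ell q^{k_1+\cdots+k_{\ell-1}}(q^{k_\ell}-1)=q^i-1$. This handles all $r$ uniformly and is self-contained. It also gives slightly more, namely $g_{f,i}\in A[g_1,\ldots,g_{r-1},\Delta_r]$. The only input it needs is \cref{prop.ring of modular forms}(i), that $g_k$ has weight $q^k-1$. The citation buys brevity; your route shows the lemma is elementary.

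Your remarks on the grading being well defined are not needed. Algebraic independence and the transcendence of $\wtd{\pi}$ play no role: a weight-homogeneous polynomial of weight $q^i-1$ in the $g_k$ is automatically a modular form of that weight. For $r=1$, the later application in \cref{lem.t(fz)-coefficient} only requires $g_{f,i}\in K\,\Delta_1^{(q^i-1)/(q-1)}$, which your computation gives directly.
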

\begin{proof}
For $r=1$, the assertions follow from \cref{rmk:rank 1} and the fact that
$\phi^A:A\to\CC_\infty[\tau]$ is an $\FF_q$-algebra homomorphism. For $r\geq 2$, see \cite[Propositions~15.12 and~16.4]{BBP24}.
\end{proof}

\begin{lem}\label{lem.t(fz)-coefficient}
    Fix an integer $r\ge 1$ and denote by $\bfz = (z_1, \bfz')$ an arbitrary point varying in $\Omega^{r+1}$.
    For $f\in A_+$, write
    \[
    t_{\Lamz}(fz_1)=\sum_{N\geq 0}\Acal_{f,N}(\bfz')\cdot t_{\Lamz}(z_1)^N.
    \]
    Then 
    \[
    \Acal_{f,N}(\bfz')\in\Lcal_{K,1-N}^{(r)}
    \]
    for all $N\geq 0$.
\end{lem}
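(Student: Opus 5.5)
Write $u:=t_{\Lamz}(z_1)$, so $\exp_{\Lamz}(z_1)=u^{-1}$. By the functional equation of the exponential,
\[
t_{\Lamz}(fz_1)=\frac{1}{\exp_{\Lamz}(fz_1)}=\frac{1}{\phi^{\Lamz}_f(\exp_{\Lamz}(z_1))}=\frac{1}{\phi^{\Lamz}_f(u^{-1})}.
\]
With $d=\deg f$ and \cref{lem:coefficient-form-f} applied to $\bfw=\bfz'\in\Omega^r$, we have $\phi_f^{\Lamz}(X)=\sum_{i=0}^{rd}g_{f,i}X^{q^i}$, where $g_{f,i}\in K[g_1,\ldots,g_{r-1},\Delta_r]$ has weight $q^i-1$. (For $r=1$, read this ring as $K[\Delta_1]$, as in \cref{rmk:rank 1}.) The leading coefficient is $g_{f,rd}=\Delta_r^{M_0}$ with $M_0=(q^{rd}-1)/(q^r-1)$. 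Pulling out the top term gives
\[
\phi_f^{\Lamz}(u^{-1})=\Delta_r^{M_0}u^{-q^{rd}}\Bigl(1+\sum_{i<rd}\frac{g_{f,i}}{\Delta_r^{M_0}}u^{q^{rd}-q^i}\Bigr),
\]
and therefore
\[
t_{\Lamz}(fz_1)=\Delta_r^{-M_0}u^{q^{rd}}\sum_{k\ge0}(-1)^k\Bigl(\sum_{i<rd}\frac{g_{f,i}}{\Delta_r^{M_0}}u^{q^{rd}-q^i}\Bigr)^k.
\]
This is a formal identity in $u$, and it is valid near infinity where $|u|$ is small. By uniqueness of the $t_{\Lamz}$-expansion it gives the coefficients $\Acal_{f,N}$. (This is also consistent with \eqref{eq.ord_r(t(fz))}.)

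The main step is weight bookkeeping. Assign to the symbol $u$ the weight $-1$, and to $c\,u^n$ the weight $w(c)-n$. Then:
\begin{itemize}
\item The prefactor $\Delta_r^{-M_0}u^{q^{rd}}$ has weight $-M_0(q^r-1)-q^{rd}=1-2q^{rd}$. Computed honestly: $\Delta_r^{-M_0}$ has weight $-(q^{rd}-1)$, and $u^{q^{rd}}$ contributes $-q^{rd}$, so the total is $1-2q^{rd}$.
\item Each correction term $\frac{g_{f,i}}{\Delta_r^{M_0}}u^{q^{rd}-q^i}$ has weight $(q^i-1)-(q^{rd}-1)-(q^{rd}-q^i)=2q^i-2q^{rd}$. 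This is not zero, so the naive grading is wrong.
\end{itemize}
The fix is to regard $u$ as carrying weight $+1$, since $u=1/\exp$ and $\exp$ behaves like weight $-1$ (from $\exp(cX)$ scaling). With this convention:
\begin{itemize}
\item The correction term has weight $(q^i-1)-(q^{rd}-1)+(q^{rd}-q^i)=0$.
\item The prefactor has weight $-(q^{rd}-1)+q^{rd}=1$.
\end{itemize}
Hence every monomial $c\,u^N$ in the expansion has $w(c)+N=1$. Each coefficient of $u^N$ is a finite sum of products of the $g_{f,i}$ divided by a power of $\Delta_r$, homogeneous of weight $1-N$. Explicitly, it has the form $F/\Delta_r^M$ with $F\in K[g_1,\ldots,g_{r-1},\Delta_r]$ homogeneous of weight $1-N+M(q^r-1)\ge0$.

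It remains to place $F$ in $\Zcal^{(r)}_{K,1-N+M(q^r-1)}$, which gives $\Acal_{f,N}\in\Lcal^{(r)}_{K,1-N}$:
\begin{itemize}
\item For $r\ge2$, this is \cref{lem:DMF-is-MES} with $L=K$.
\item For $r=1$, $F\in K[\Delta_1]$ is homogeneous, so it is a $K$-multiple of a power $\Delta_1^m$, and \cref{rmk:rank 1}(i) gives $\Delta_1^m\in\Zcal^{(1)}_{K,m(q-1)}$.
\end{itemize}
Finiteness of each coefficient holds because the correction terms have positive $u$-degree, so only finitely many $k$ contribute to a given $N$. The case $\Acal_{f,N}=0$ is trivially in $\Lcal^{(r)}_{K,1-N}$.

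The main obstacle is getting the weight convention right and justifying it rigorously. Rather than appeal to a heuristic grading, I would phrase the argument as a direct verification on the explicit geometric-series formula: in each monomial, track the power of $\Delta_r$ in the denominator against the total weight of the numerator. This reduces to the identity $(q^i-1)+(q^{rd}-q^i)=q^{rd}-1$, checked term by term.
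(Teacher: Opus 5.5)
Your proposal is correct and follows essentially the paper's proof: factor the leading term $\Delta_r^{M_0}u^{-q^{rd}}$ out of $\phi_f^{\Lamz}(u^{-1})$, expand the inverse as a geometric series, and use \cref{lem:coefficient-form-f} with \cref{lem:DMF-is-MES} (or \cref{rmk:rank 1} when $r=1$) to check that every monomial contributing to $u^N$ has coefficient in $\Lcal^{(r)}_{K,1-N}$; this is exactly the balance $(q^i-1)+(q^{rd}-q^i)=q^{rd}-1$ you identify. In a write-up, drop the false start with $u$ of weight $-1$ and give only the direct term-by-term verification, as the paper does.
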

\begin{proof}
    Let $d=\deg f$. 
    We note that from~\eqref{eq.t-Lambda} and~\eqref{eq.exp linearity}, we have
    \begin{align*}
        t_{\Lamz}(fz_1)
        &= \frac{1}{\exp_{\Lamz}(fz_1)}
        = \frac{1}{\phi_f^{\Lamz}\left(t_{\Lamz}(z_1)^{-1}\right)} \\
        &= \frac{1}{\displaystyle\sum_{i=0}^{rd} g_{f,i}(\bfz')t_{\Lamz}(z_1)^{-q^i}} \\
        &= \frac{t_{\Lamz}(z_1)^{q^{rd}}}{g_{f,rd}(\bfz')} \left( 1+ \sum_{i=0}^{rd-1} \frac{g_{f,i}(\bfz')}{g_{f,rd}(\bfz')} t_{\Lamz}(z_1)^{q^{rd}-q^i} \right)^{-1}.
    \end{align*}
    By \cref{lem:coefficient-form-f}, together with \cref{rmk:rank 1} for $r=1$ and \cref{lem:DMF-is-MES} for $r\geq2$, we have
    \[
    g_{f,i}(\bfz')\in\Zcal_{K,q^i-1}^{(r)},\qquad
    g_{f,rd}(\bfz')^{-1}\in\Lcal_{K,1-q^{rd}}^{(r)},
    \]
    and hence
    \[
    \frac{g_{f,i}(\bfz')}{g_{f,rd}(\bfz')}
    \in
    \Lcal^{(r)}_{K,q^i-q^{rd}}
    \qquad
    (0\leq i\leq rd-1).
    \]

    Expanding the inverse as a formal power series gives
    \[
    \left(
    1+
    \sum_{i=0}^{rd-1}
    \frac{g_{f,i}(\bfz')}{g_{f,rd}(\bfz')}
    t_{\Lamz}(z_1)^{q^{rd}-q^i}
    \right)^{-1}
    =
    \sum_{m\geq0}
    (-1)^m
    \left(
    \sum_{i=0}^{rd-1}
    \frac{g_{f,i}(\bfz')}{g_{f,rd}(\bfz')}
    t_{\Lamz}(z_1)^{q^{rd}-q^i}
    \right)^m.
    \]
    Consider a monomial occurring on the right-hand side,
    \[
    \prod_{j=1}^m
    \frac{g_{f,i_j}(\bfz')}{g_{f,rd}(\bfz')}\,
    t_{\Lamz}(z_1)^{q^{rd}-q^{i_j}}.
    \]
    Its coefficient belongs to $\Lcal_{K, \sum_{j=1}^m(q^{i_j}-q^{rd})}^{(r)},$ while its power of $t_{\Lamz}(z_1)$ is
    \[
    \sum_{j=1}^m(q^{rd}-q^{i_j}).
    \]
    Thus, if such a monomial contributes to the coefficient of $t^N$
    in $t_{\Lamz}(fz_1)$, then
    \[
    N = q^{rd} + \sum_{j=1}^m(q^{rd}-q^{i_j}),
    \]
    and its coefficient belongs to
    \begin{align*}
        \Lcal^{(r)}_{K, 1-q^{rd}
        +\sum_{j=1}^m(q^{i_j}-q^{rd})}
        &=
        \Lcal_{K,1-N}^{(r)}.
    \end{align*}
    Therefore every term contributing to $\Acal_{f,N}(\bfz')$ lies in
    $\Lcal^{(r)}_{K, 1-N}$, and hence
    \[
    \Acal_{f,N}(\bfz')\in\Lcal_{K, 1-N}^{(r)}
    \]
    for all $N\geq0$.
\end{proof}

\begin{lem}\label{lem.Goss-poly-coefficient}
    Fix an integer $r\ge 1$ and denote by $\bfw$ an arbitrary point varying in $\Omega^r$.
    For $s\geq 1$, write
    \[
    G_s^{\Lambda_{\mathbf{w}}}(X)=\sum_{i=1}^s \Bcal_{s,i}(\mathbf{w})X^{i}\in\Ocal(\Omega^r)[X].
    \]
    Then 
    \[
    \Bcal_{s,i}\in\Lcal_{K, s-i}^{(r)}
    \]
    for every $1\leq i\leq s$.
\end{lem}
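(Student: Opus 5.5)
Induction on $s$ using the recurrence of \cref{df.Goss polynomial}, with the coefficients $e_j(\bfw)$ controlled by their weights. The first step is to record that
\[
e_j\in\Zcal^{(r)}_{K,q^j-1}\subseteq\Lcal^{(r)}_{K,q^j-1}\qquad (j\geq0).
\]
For $r=1$ this is \cref{rmk:rank 1}(ii). For $r\geq2$, \cref{prop.ring of modular forms}(iv) and (v) show that $e_j$ is a Drinfeld modular form of weight $q^j-1$ lying in $K[g_1,\ldots,g_{r-1},\Delta_r]$; then \cref{lem:DMF-is-MES} with $L=K$ gives $e_j\in\Zcal^{(r)}_{K,q^j-1}$. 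We also need $e_0=1\in\Zcal^{(r)}_{K,0}$, which is immediate. Finally, $\Zcal^{(r)}_{K,w}\subseteq\Lcal^{(r)}_{K,w}$ holds by taking $M=0$.

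Next comes strong induction on $s$. For $s=1$ we have $G_1=X$, so $\Bcal_{1,1}=1\in\Lcal^{(r)}_{K,0}$. For $s>1$, comparing coefficients of $X^i$ in
\[
G_s^{\Lambda_\bfw}(X)=X\sum_{q^j<s}e_j(\bfw)G^{\Lambda_\bfw}_{s-q^j}(X)
\]
gives
\[
\Bcal_{s,i}=\sum_{\substack{q^j<s\\ 1\le i-1\le s-q^j}}e_j\,\Bcal_{s-q^j,\,i-1}.
\]
By the induction hypothesis each summand lies in $\Lcal^{(r)}_{K,q^j-1}\cdot\Lcal^{(r)}_{K,(s-q^j)-(i-1)}$. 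The multiplicativity of the $\Lcal$'s noted after their definition places this product in $\Lcal^{(r)}_{K,s-i}$. When $i=1$ the sum is empty (there is no $X^0$ term in $G_{s-q^j}$, by \cref{prop.G_a(t(X)) = sum 1/(X+λ)^a}(iii)), so $\Bcal_{s,1}=0$, which lies in every $\Lcal$.

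The remaining step is to check that $\Lcal^{(r)}_{K,w}$ is closed under addition, so that the sum stays in $\Lcal^{(r)}_{K,s-i}$. Given $F_1/\Delta_r^{M_1}$ and $F_2/\Delta_r^{M_2}$ with $M_1\le M_2$, rewrite the first as $F_1\Delta_r^{M_2-M_1}/\Delta_r^{M_2}$. The new numerator lies in $\Zcal^{(r)}_{K,w+M_2(q^r-1)}$, because $\Delta_r\in\Zcal^{(r)}_{K,q^r-1}$ and \eqref{eq.q-shuffle-product-MES} applies. For $r=1$ this uses \cref{rmk:rank 1}(i); for $r\geq2$ it uses \cref{lem:DMF-is-MES}.

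I expect no serious obstacle. The only care needed is bookkeeping: the inclusion $\Zcal\subseteq\Lcal$, the closure of $\Lcal^{(r)}_{K,w}$ under sums, and, in the rank-one case, using \cref{rmk:rank 1} in place of the Drinfeld modular form results.
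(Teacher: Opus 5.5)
Your proof is correct, but it takes a different route from the paper.

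The paper does not induct. It quotes Gekeler's closed formula \cite[(3.8)]{Gekeler88}, which writes $\Bcal_{s,i}$ as a $K$-linear combination, with multinomial coefficients, of monomials $e_0^{i_0}\cdots e_{s-1}^{i_{s-1}}$ satisfying $\sum_l i_l=i-1$ and $\sum_l i_lq^l=s-1$. Each such monomial has weight $\sum_l i_l(q^l-1)=s-i$, so it lies in $\Zcal^{(r)}_{K,s-i}$ by \eqref{eq.q-shuffle-product-MES}.

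You instead track weights through the defining recurrence of \cref{df.Goss polynomial}. This essentially re-derives Gekeler's formula with the weight bookkeeping built in. It is self-contained and avoids the external citation, at the cost of an induction.

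Your detour through $\Lcal^{(r)}_{K,w}$ is unnecessary. Run the same induction with $\Zcal^{(r)}_{K,w}$ in place of $\Lcal^{(r)}_{K,w}$. These are $K$-vector spaces by definition, they multiply correctly by \eqref{eq.q-shuffle-product-MES}, and every weight $(s-q^j)-(i-1)$ occurring in the recurrence is nonnegative. You then obtain the stronger conclusion $\Bcal_{s,i}\in\Zcal^{(r)}_{K,s-i}$, which is what the paper's proof actually establishes. You also no longer need $\Delta_r\in\Zcal^{(r)}_{K,q^r-1}$ to get closure under sums.

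That said, your closure check for $\Lcal^{(r)}_{K,w}$ is correct. The paper uses that fact tacitly in the subsequent lemmas anyway.
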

\begin{proof}
    Recall that the coefficient $e_n(\mathbf{w}) \in \Ocal(\Omega^r)$ of
    $\exp_{\Lambda_{\mathbf{w}}}(X)$ is defined in \eqref{eq.exponential coeff e_n}. By \cref{rmk:rank 1} for $r=1$ and \cref{prop.ring of modular forms,lem:DMF-is-MES} for $r\geq2$, we have
\[
e_n(\mathbf{w})\in\Zcal^{(r)}_{K,q^n-1}.
\]
    According to \cite[(3.8)]{Gekeler88}, the Goss polynomial
    associated with $\Lambda_{\mathbf{w}}$ is given by
    \[
    G_s^{\Lambda_{\mathbf{w}}}(X)
    =
    \sum_{j=0}^{s-1}\sum_{\underline{i}}
    \binom{j}{\underline{i}}
    \bm{e}^{\underline{i}}(\bfw)X^{j+1},
    \]
    where the inner sum runs over all tuples
    $\underline{i}=(i_0,\ldots,i_{s-1})$ of non-negative integers satisfying
    \[
    \sum_{l=0}^{s-1} i_l=j
    \qquad\text{and}\qquad
    \sum_{l=0}^{s-1} i_lq^l=s-1,
    \]
    and
    \[
    \bm{e}^{\underline{i}}(\bfw)
    :=
    e_0(\bfw)^{i_0}\cdots e_{s-1}(\bfw)^{i_{s-1}}.
    \]
    Hence, by \eqref{eq.q-shuffle-product-MES},
    \[
    \bm{e}^{\underline{i}}(\bfw)
    \in
    \Zcal^{(r)}_{K,\sum_{l=0}^{s-1} i_l(q^l-1)}
    =
    \Zcal^{(r)}_{K,s-1-j}.
    \]
    Taking $j=i-1$, we obtain
    \[
    \Bcal_{s,i}(\bfw)
    \in
    \Zcal^{(r)}_{K,s-i}
    \subseteq
    \Lcal^{(r)}_{K,s-i}.
    \]
\end{proof}
    
\begin{lem}\label{lem:t-expansion of G(t(fz))}
    Fix an integer $r\ge 1$ and denote by $\bfz = (z_1, \bfz')$ an arbitrary point varying in $\Omega^{r+1}$.
    For $f\in A_+$ and $s\geq 1$, write
    \[
    G_s^{\Lamz}(t_{\Lamz}(fz_1))=\sum_{N\geq 0}\Ccal_{s,f,N}(\bfz')\cdot t_{\Lamz}(z_1)^{N}.
    \]
    Then 
    \[
    \Ccal_{s,f,N}(\bfz')\in\Lcal_{K, s-N}^{(r)}
    \]
    for all $N\geq 0$.
\end{lem}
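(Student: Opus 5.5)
The plan is to combine \cref{lem.Goss-poly-coefficient} with \cref{lem.t(fz)-coefficient}, using that the spaces $\Lcal^{(r)}_{K,w}$ behave multiplicatively in the weight. Write $t=t_{\Lamz}(z_1)$ and apply \cref{lem.Goss-poly-coefficient} with $\bfw=\bfz'$:
\[
G_s^{\Lamz}(t_{\Lamz}(fz_1))=\sum_{i=1}^s \Bcal_{s,i}(\bfz')\,t_{\Lamz}(fz_1)^i ,
\qquad \Bcal_{s,i}(\bfz')\in\Lcal^{(r)}_{K,s-i}.
\]
Substituting the expansion $t_{\Lamz}(fz_1)=\sum_{N\ge0}\Acal_{f,N}(\bfz')t^N$ from \cref{lem.t(fz)-coefficient} and expanding the $i$-th power, the coefficient of $t^N$ in $t_{\Lamz}(fz_1)^i$ is
\[
\sum_{N_1+\cdots+N_i=N}\Acal_{f,N_1}(\bfz')\cdots\Acal_{f,N_i}(\bfz').
\]
Each product lies in $\Lcal^{(r)}_{K,\sum_j(1-N_j)}=\Lcal^{(r)}_{K,i-N}$, because $\Lcal^{(r)}_{K,w_1}\cdot\Lcal^{(r)}_{K,w_2}\subseteq\Lcal^{(r)}_{K,w_1+w_2}$.

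Multiplying by $\Bcal_{s,i}(\bfz')\in\Lcal^{(r)}_{K,s-i}$ gives a term in $\Lcal^{(r)}_{K,s-N}$. This weight is independent of $i$, so summing over $1\le i\le s$ gives
\[
\Ccal_{s,f,N}(\bfz')\in\Lcal^{(r)}_{K,s-N}.
\]

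Two routine points need checking.

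\begin{enumerate}
\item $\Lcal^{(r)}_{K,w}$ is closed under addition. Given $F_1/\Delta_r^{M_1}$ and $F_2/\Delta_r^{M_2}$ with $M_1\le M_2$, rewrite the first as $F_1\Delta_r^{M_2-M_1}/\Delta_r^{M_2}$. For $r\ge2$, $\Delta_r\in\Zcal^{(r)}_{K,q^r-1}$ by \cref{lem:DMF-is-MES}; for $r=1$, $\Delta_1\in\Zcal^{(1)}_{K,q-1}$ by \cref{rmk:rank 1}. So $F_1\Delta_r^{M_2-M_1}\in\Zcal^{(r)}_{K,w+M_2(q^r-1)}$ by \eqref{eq.q-shuffle-product-MES}, and both terms share a denominator.
\item Every coefficient is a finite sum. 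Here $i$ ranges over $1\le i\le s$, and there are finitely many compositions $N_1+\cdots+N_i=N$ with $N_j\ge0$.
\end{enumerate}

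The main obstacle is justifying the substitution and the coefficient comparison. The substitution is legitimate because $G_s^{\Lamz}$ is a polynomial with coefficients in $\Ocal(\Omega^r)$, and products of $t$-expansions are again $t$-expansions. The coefficients may then be read off termwise by the uniqueness of $t_{\Lamz}$-expansions from \cite[Proposition~5.4]{BBP24}. Beyond that, the argument is pure weight bookkeeping.
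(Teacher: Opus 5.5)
Your proposal is correct and matches the paper's proof: substitute the expansion of $t_{\Lamz}(fz_1)$ from \cref{lem.t(fz)-coefficient} into the Goss polynomial written via \cref{lem.Goss-poly-coefficient}, expand the $i$-th powers, and use $\Lcal^{(r)}_{K,w_1}\cdot\Lcal^{(r)}_{K,w_2}\subseteq\Lcal^{(r)}_{K,w_1+w_2}$. Your check that $\Lcal^{(r)}_{K,w}$ is closed under addition, via $\Delta_r\in\Zcal^{(r)}_{K,q^r-1}$, fills in a step the paper uses implicitly.
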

\begin{proof}
    Using the same notation as in \cref{lem.t(fz)-coefficient} and \cref{lem.Goss-poly-coefficient}, we see that
    \[
    G_s^{\Lamz}(t_{\Lamz}(fz_1)) = \sum_{i=1}^s \Bcal_{s,i}(\bfz')\left(\sum_{M\geq 0}\Acal_{f,M}(\bfz')\cdot t_{\Lamz}(z_1)^M\right)^i
    \]
    Comparing the coefficient of $t_{\Lamz}(z_1)^N$, we obtain
    \begin{align*}
        \Ccal_{s,f,N}(\bfz') &= \sum_{i=1}^s \Bcal_{s,i}(\bfz') \left(\sum_{M_1 +\cdots + M_i = N}\prod_{j=1}^i  \Acal_{f,M_j}(\bfz')\right) \\
        &= \sum_{i=1}^s \sum_{M_1 +\cdots + M_i = N} \Bcal_{s,i}(\bfz')\prod_{j=1}^i \Acal_{f,M_j}(\bfz').
    \end{align*}
    By \cref{lem.t(fz)-coefficient} and \cref{lem.Goss-poly-coefficient}, for each $1\le i \le s$, we have
    \[
    \Bcal_{s,i}(\bfz')\prod_{j=1}^i  \Acal_{f,M_j}(\bfz') \in \Lcal_{K, s-i}^{(r)} \cdot \Lcal_{K, 1-M_1}^{(r)}\cdots \Lcal_{K, 1-M_i}^{(r)} \subset\Lcal_{K, s-N}^{(r)}. 
    \]
    Therefore, $\Ccal_{s,f,N}(\bfz') \in \Lcal_{K,s-N}^{(r)}$, as desired.
\end{proof}

\begin{prop}\label{prop:t-expansion of MGS}
    Fix an integer $r\ge 1$ and denote by $\bfz = (z_1, \bfz')$ an arbitrary point varying in $\Omega^{r+1}$.
    For any index $\ww{s}$, write
    \[
    G_{r+1}(\ww{s};\bfz)=\sum_{N\geq 0}\alpha_{\ww{s},N}(\bfz')\cdot t_{\Lamz}(z_1)^{N}.
    \]
    Then 
    \[
    \alpha_{\ww{s},N}(\bfz')\in\Lcal_{K, \wt(\ww{s})-N}^{(r)}
    \]
    for all $N\geq 0$.
\end{prop}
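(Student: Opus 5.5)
The plan is to expand the multiple Goss sum term by term using \cref{lem:t-expansion of G(t(fz))}, and then justify exchanging the infinite sum over $(f_1,\ldots,f_m)$ with coefficient extraction. If $\ww{s}=\emptyset$ there is nothing to prove, since $G_{r+1}(\emptyset;\bfz)=1\in\Lcal^{(r)}_{K,0}$ and all higher coefficients vanish. So let $\ww{s}=(s_1,\ldots,s_m)$ be nonempty.

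For a fixed tuple $(f_1,\ldots,f_m)\in A_+^m$ with $\deg f_1>\cdots>\deg f_m$, multiply the $t_{\Lamz}$-expansions of \cref{lem:t-expansion of G(t(fz))}. The coefficient of $t_{\Lamz}(z_1)^N$ in $\prod_{j} G_{s_j}^{\Lamz}(t_{\Lamz}(f_jz_1))$ is
\[
\sum_{N_1+\cdots+N_m=N}\prod_{j=1}^m \Ccal_{s_j,f_j,N_j}(\bfz').
\]
This is a finite sum. By the multiplicativity $\Lcal^{(r)}_{K,w_1}\cdot\Lcal^{(r)}_{K,w_2}\subseteq\Lcal^{(r)}_{K,w_1+w_2}$, each summand lies in $\Lcal^{(r)}_{K,\sum_j(s_j-N_j)}=\Lcal^{(r)}_{K,\wt(\ww{s})-N}$.

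The next step is to sum over all tuples. The key input is \eqref{eq.ord_r(t(fz))}, namely $\ord_{r+1}(t_{\Lamz}(fz_1))\ge q^{r\deg f}$. Because $X\mid G_s^{\Lamz}(X)$ (\cref{prop.G_a(t(X)) = sum 1/(X+λ)^a}), the term indexed by $(f_1,\ldots,f_m)$ has $t$-order at least $\sum_j q^{r\deg f_j}\ge q^{r\deg f_1}$. Hence for a fixed $N$, only tuples with $q^{r\deg f_1}\le N$ contribute to the coefficient of $t^N$, and there are finitely many such tuples since $A_+$ has finitely many elements of each degree. This gives
\[
\alpha_{\ww{s},N}(\bfz')=\sum_{\substack{f_1,\ldots,f_m\in A_+,\ \deg f_1>\cdots>\deg f_m\\ q^{r\deg f_1}\le N}}\ \sum_{N_1+\cdots+N_m=N}\prod_{j=1}^m\Ccal_{s_j,f_j,N_j}(\bfz').
\]
This is a finite sum of elements of $\Lcal^{(r)}_{K,\wt(\ww{s})-N}$, and that set is closed under finite sums. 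Closure holds because we can bring summands to a common power $\Delta_r^M$ by multiplying numerator and denominator by a power of $\Delta_r$. Since $\Delta_r\in\Zcal^{(r)}_{K,q^r-1}$ (by \cref{rmk:rank 1} for $r=1$ and \cref{lem:DMF-is-MES} for $r\ge2$), the shuffle product \eqref{eq.q-shuffle-product-MES} keeps the numerator in the right weight. So $\alpha_{\ww{s},N}(\bfz')\in\Lcal^{(r)}_{K,\wt(\ww{s})-N}$.

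The main obstacle is justifying the interchange of the infinite sum over $(f_1,\ldots,f_m)$ with taking $t_{\Lamz}$-expansion coefficients. I would handle it as follows. The defining series of $G_{r+1}(\ww{s};\bfz)$ converges uniformly near infinity, as in \cite{CCHT25,CCHT26}, where the same argument identifies the constant term. Each partial sum is $\Gamma_{r+1}$-invariant, and by the order bound its expansion agrees with that of the full sum up to $t^N$ once $q^{r\deg f_1}>N$. Combined with the uniqueness of $t_{\Lamz}$-expansions (\cite[Proposition~5.4]{BBP24}), this shows that the coefficient of $t^N$ of the full sum equals the finite sum displayed above. I would cite the corresponding step in the proof of \cite[Proposition~2.10]{CCHT26} rather than redo the analytic estimates.
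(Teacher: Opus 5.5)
Your proposal is correct and follows essentially the same route as the paper. You expand each product term via \cref{lem:t-expansion of G(t(fz))}, use \eqref{eq.ord_r(t(fz))} to reduce to finitely many tuples $(f_1,\ldots,f_m)$ modulo $t_{\Lamz}(z_1)^{N+1}$, and conclude by multiplicativity of the spaces $\Lcal^{(r)}_{K,w}$. Your remarks on why $\Lcal^{(r)}_{K,w}$ is closed under addition (via $\Delta_r\in\Zcal^{(r)}_{K,q^r-1}$) and on exchanging the sum over $(f_1,\ldots,f_m)$ with coefficient extraction make explicit points that the paper's proof leaves implicit.
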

\begin{proof}
    We may assume that $\ww{s}=(s_1,\ldots,s_m)$ is nonempty. Fix $f_1,\ldots,f_m\in A_+$ with $\deg f_1 > \cdots > \deg f_m$.
    Then by \cref{lem:t-expansion of G(t(fz))}, we have 
    \begin{align*}
        &G_{s_1}^{\Lamz}(t_{\Lamz}(f_1z_1))\cdots G_{s_m}^{\Lamz}(t_{\Lamz}(f_mz_1)) \\
        &= \left(\sum_{N_1=0}^{\infty}\Ccal_{s_1,f_1,N_1}(\bfz')t_{\Lamz}(z_1)^{N_1}\right)\cdots \left(\sum_{N_m=0}^{\infty}\Ccal_{s_m,f_m,N_m}(\bfz')t_{\Lamz}(z_1)^{N_m}\right) \\
        &= \sum_{N=0}^{\infty}\left(\sum_{\substack{N_1+\cdots+N_m=N \\ N_1,\ldots,N_m\ge 0}}\Ccal_{s_1,f_1,N_1}(\bfz')\cdots \Ccal_{s_m, f_m, N_m}(\bfz')\right)t_{\Lamz}(z_1)^N
    \end{align*}
    and 
    \[
    \Ccal_{s_1,f_1,N_1}(\bfz')\cdots \Ccal_{s_m,f_m,N_m}(\bfz')\in \Lcal_{K, s_1-N_1}^{(r)}\cdots \Lcal_{K, s_m-N_m}^{(r)} \subseteq \Lcal_{K, \wt(\ww{s})-N}^{(r)}.
    \]
    
    Now, note that by~\eqref{eq.ord_r(t(fz))}, for all $s\geq 1$ and $f\in A_+$, we have
    \[
    \ord_{r+1}(G_s(t_{\Lamz}(fz_1))) \ge q^{r\deg f}.
    \]
    In particular, if $\deg f > \frac{1}{r}\log_q (N+1)$, then $\ord_{r+1}(G_s(t_{\Lamz}(fz_1))) > N$.
    Hence, for all $N\geq 0$, 
    \begin{multline*}
        G_{r+1}(\ww{s}; \mathbf{z}) \\
        \equiv \sum_{\substack{f_1,\ldots,f_m\in A_+ \\ \frac{1}{r}\log_q(N+1)\ge \deg f_1 > \cdots > \deg f_m}}G_{s_1}^{\Lamz}(t_{\Lamz}(f_1z_1))\cdots G_{s_m}^{\Lamz}(t_{\Lamz}(f_mz_1)) 
        \pmod{t_{\Lamz}(z_1)^{N+1}}.
    \end{multline*}
    Therefore, we obtain
    \[
    \alpha_{\ww{s}, N}(\bfz') = \sum_{\substack{f_1,\ldots,f_m\in A_+ \\ \frac{1}{r}\log_q(N+1)\ge \deg f_1 > \cdots > \deg f_m}} \sum_{\substack{N_1+\cdots+N_m=N \\ N_1,\ldots, N_m\ge 0}}\Ccal_{s_1,f_1,N_1}(\bfz')\cdots \Ccal_{s_m, f_m, N_m}(\bfz')\in \mathcal{L}_{K,\wt(\ww{s}) - N}^{(r)}.
    \]
\end{proof}

\begin{thm}\label{thm:t-expansion-of-MES}
    Fix an integer $r\ge 1$ and denote by $\bfz = (z_1, \bfz')$ an arbitrary point varying in $\Omega^{r+1}$.
    For any index $\ww{s}$, write
    \[
    E_{r+1}(\ww{s};\bfz)=\sum_{N\geq 0}\beta_{\ww{s},N}(\bfz')\cdot t_{\Lamz}(z_1)^{N}.
    \]
    Then 
    \[
    \beta_{\ww{s},N}(\bfz')\in\Lcal^{(r)}_{K,\wt(\ww{s})-N}
    \]
    for all $N\geq 0$.
\end{thm}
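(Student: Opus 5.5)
The plan is to combine the Goss expansion of \cref{prop.Goss expansion of MES} with \cref{prop:t-expansion of MGS}. First I dispose of the empty index. Then $E_{r+1}(\emptyset;\bfz)=1$, so $\beta_{\emptyset,0}=1\in\Zcal^{(r)}_{K,0}\subseteq\Lcal^{(r)}_{K,0}$ and $\beta_{\emptyset,N}=0$ for $N\geq1$. The zero element lies in every $\Lcal^{(r)}_{K,w}$, since it is $0/\Delta_r^M$ with $M$ large enough that $w+M(q^r-1)\geq0$.

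For nonempty $\ww{s}=(s_1,\ldots,s_m)$, I apply \cref{prop.Goss expansion of MES} with rank $r+1$:
\[
E_{r+1}(\ww{s};\bfz)=E_r(\ww{s};\bfz')+\sum_{i=1}^{m}E_r(\ww{s}^{(i)};\bfz')\,G_{r+1}(\ww{s}_{(i)};\bfz).
\]
The functions $E_r(\ww{s}^{(i)};\bfz')$ do not depend on $z_1$. Under the identification $\Ocal(\Omega^r)\subseteq\Ocal(\Omega^{r+1})$ they therefore act as constants for the $t_{\Lamz}(z_1)$-expansion. Expanding each $G_{r+1}(\ww{s}_{(i)};\bfz)=\sum_N\alpha_{\ww{s}_{(i)},N}(\bfz')t_{\Lamz}(z_1)^N$ and using uniqueness of the expansion (\cite[Proposition~5.4]{BBP24}), I read off
\[
\beta_{\ww{s},N}(\bfz')=\delta_{N,0}\,E_r(\ww{s};\bfz')+\sum_{i=1}^m E_r(\ww{s}^{(i)};\bfz')\,\alpha_{\ww{s}_{(i)},N}(\bfz').
\]

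Now I track weights. By definition $E_r(\ww{s};\bfz')\in\Zcal^{(r)}_{K,\wt(\ww{s})}\subseteq\Lcal^{(r)}_{K,\wt(\ww{s})}$ (take $M=0$), which handles the $N=0$ term. For $r=1$ this uses $E_1=\zeta_A$. Likewise $E_r(\ww{s}^{(i)};\bfz')\in\Lcal^{(r)}_{K,\wt(\ww{s}^{(i)})}$. By \cref{prop:t-expansion of MGS}, $\alpha_{\ww{s}_{(i)},N}\in\Lcal^{(r)}_{K,\wt(\ww{s}_{(i)})-N}$. Since $\wt(\ww{s}^{(i)})+\wt(\ww{s}_{(i)})=\wt(\ww{s})$, the multiplicativity $\Lcal^{(r)}_{K,w_1}\Lcal^{(r)}_{K,w_2}\subseteq\Lcal^{(r)}_{K,w_1+w_2}$ puts each product in $\Lcal^{(r)}_{K,\wt(\ww{s})-N}$.

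It remains to check that $\Lcal^{(r)}_{K,w}$ is closed under addition, so that the finite sum stays inside. Given $F_1/\Delta_r^{M_1}$ and $F_2/\Delta_r^{M_2}$ with $M_1\le M_2$, I rewrite the first as $F_1\Delta_r^{M_2-M_1}/\Delta_r^{M_2}$. Here $\Delta_r^{M_2-M_1}$ lies in $\Zcal^{(r)}_{K,(M_2-M_1)(q^r-1)}$: for $r\geq2$ by \cref{lem:DMF-is-MES}, and for $r=1$ by \cref{rmk:rank 1}. The $q$-shuffle inclusion \eqref{eq.q-shuffle-product-MES} then gives $F_1\Delta_r^{M_2-M_1}\in\Zcal^{(r)}_{K,w+M_2(q^r-1)}$, so both numerators share a common denominator and the sum lies in $\Lcal^{(r)}_{K,w}$.

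The only point needing care is the legitimacy of comparing coefficients: the $t_{\Lamz}$-expansion of the product must be the product of the expansions, with coefficients from $\Ocal(\Omega^r)$ pulled through. This follows because both sides are convergent expansions near infinity, and uniqueness of the expansion then forces the coefficient identity.
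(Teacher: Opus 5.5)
Your proposal is correct and follows the paper's proof. The Goss expansion of \cref{prop.Goss expansion of MES}, combined with \cref{prop:t-expansion of MGS}, the fact that $E_r(\ww{s}^{(i)};\bfz')\in\Zcal^{(r)}_{K,\wt(\ww{s}^{(i)})}$, and the multiplicativity of the spaces $\Lcal^{(r)}_{K,w}$, gives the result; the paper simply treats your $\delta_{N,0}$ term as the $i=0$ summand, using $G_{r+1}(\emptyset;\bfz)=1$. Your explicit check that $\Lcal^{(r)}_{K,w}$ is closed under addition, via $\Delta_r^{k}\in\Zcal^{(r)}_{K,k(q^r-1)}$, supplies a step the paper uses without comment.
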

\begin{proof}
    We may assume that $\ww{s}=(s_1,\ldots,s_m)$ is nonempty.
    By \cref{prop.Goss expansion of MES}, we have
    \[
    E_{r+1}(\ww{s};\bfz)
    =
    \sum_{i=0}^{m}
    E_r(\ww{s}^{(i)};\bfz')
    G_{r+1}(\ww{s}_{(i)};\bfz).
    \]
    For each $0\leq i\leq m$, write
    \[
    G_{r+1}(\ww{s}_{(i)};\bfz)
    =
    \sum_{N\geq0}
    \alpha_{\ww{s}_{(i)},N}(\bfz')
    t_{\Lamz}(z_1)^N.
    \]
    By \cref{prop:t-expansion of MGS}, we have
    \[
    \alpha_{\ww{s}_{(i)},N}(\bfz')
    \in
    \Lcal_{K,\wt(\ww{s}_{(i)})-N}^{(r)}.
    \]
    Moreover,
    \[
    E_r(\ww{s}^{(i)};\bfz')
    \in
    \Zcal_{K,\wt(\ww{s}^{(i)})}^{(r)}
    \subseteq
    \Lcal_{K,\wt(\ww{s}^{(i)})}^{(r)}.
    \]
    Hence
    \[
    E_r(\ww{s}^{(i)};\bfz')
    \alpha_{\ww{s}_{(i)},N}(\bfz')
    \in
    \Lcal_{K,\wt(\ww{s})-N}^{(r)}.
    \]
    Comparing the coefficients of $t_{\Lamz}(z_1)^N$, we obtain
    \[
    \beta_{\ww{s},N}(\bfz')
    =
    \sum_{i=0}^{m}
    E_r(\ww{s}^{(i)};\bfz')
    \alpha_{\ww{s}_{(i)},N}(\bfz')
    \in
    \Lcal_{K,\wt(\ww{s})-N}^{(r)},
    \]
    as desired.
\end{proof}

\subsection{Proof of \texorpdfstring{\cref{thm.main-thm}~(i)}{Theorem 1.5 (i)}}\label{sec.proof of 1.5-(i)}

We proceed by induction on $r$.
The case $r=1$ is exactly \cref{thm.Goncharov-Zagier conjecture for function field}~(i).
Let $r\ge 1$ and assume that the assertions hold for $r$.
Suppose that we have a $\ov{K}$-linear relation
\begin{align}\label{eq:linear-comb-different-weight}
   \sum_{i=1}^m
   \sum_{\wt(\ww{s})=w_i}
   c_{\ww{s}}E_{r+1}(\ww{s};\bfz)=0
\end{align}
for some distinct $w_1,\ldots,w_m\geq 0$ and
$c_{\ww{s}}\in\ov{K}$. It suffices to show that, for each
$1\leq i\leq m$,
\[
\sum_{\wt(\ww{s})=w_i}
c_{\ww{s}}E_{r+1}(\ww{s};\bfz)=0.
\]

By \eqref{eq:linear-comb-different-weight}, we obtain
\[
\sum_{N\geq 0}\sum_{i=1}^m
\sum_{\wt(\ww{s})=w_i}
c_{\ww{s}}\beta_{\ww{s},N}(\bfz')
\cdot t_{\Lamz}(z_1)^N=0.
\]
By the uniqueness of the $t_{\Lamz}(z_1)$-expansion
(see \cite[Proposition~5.4]{BBP24}), it follows that
\begin{align}\label{eq:linear-comb-different-weight-coefficient}
 \sum_{i=1}^m
 \sum_{\wt(\ww{s})=w_i}
 c_{\ww{s}}\beta_{\ww{s},N}(\bfz')=0
\end{align}
for all $N\geq 0$.

Fix $N\geq 0$. By \cref{thm:t-expansion-of-MES},
\[
\beta_{\ww{s},N}(\bfz')
\in
\Lcal_{K,\wt(\ww{s})-N}^{(r)}.
\]
Hence, after choosing $M\gg 1$ sufficiently large, we may write
\[
\beta_{\ww{s},N}(\bfz')
=
\Delta_r^{-M}\Xi_{\ww{s},N}(\bfz')
\]
for all $\ww{s}$ occurring in
\eqref{eq:linear-comb-different-weight-coefficient}, where
\[
\Xi_{\ww{s},N}(\bfz')
\in
\Zcal_{\ov{K},\wt(\ww{s})-N+M(q^r-1)}^{(r)}.
\]
Thus, after multiplying
\eqref{eq:linear-comb-different-weight-coefficient} by $\Delta_r^M$, we obtain
\[
\sum_{i=1}^m
\sum_{\wt(\ww{s})=w_i}
c_{\ww{s}}\Xi_{\ww{s},N}(\bfz')=0.
\]
For each $i$, note that the inner sum belongs to
$\Zcal_{\ov{K},w_i-N+M(q^r-1)}^{(r)}$
and the weights
\[
w_i-N+M(q^r-1)
\]
are all distinct. Therefore, by the induction hypothesis,
\[
\sum_{\wt(\ww{s})=w_i}
c_{\ww{s}}\Xi_{\ww{s},N}(\bfz')=0
\]
for each $1\leq i\leq m$. Dividing by $\Delta_r^M$, we obtain
\[
\sum_{\wt(\ww{s})=w_i}
c_{\ww{s}}\beta_{\ww{s},N}(\bfz')=0
\]
for every $N\geq 0$ and every $1\leq i\leq m$. Hence, by the
$t_{\Lamz}(z_1)$-expansion,
\[
\sum_{\wt(\ww{s})=w_i}
c_{\ww{s}}E_{r+1}(\ww{s};\bfz)=0
\]
for each $1\leq i\leq m$, as desired.

\subsection{Proof of \texorpdfstring{\cref{thm.main-thm}~(ii)}
{Theorem 1.5 (ii)}}\label{sec.proof of 1.5(ii)}

The surjectivity of the natural map
\[
\Zcal_K^{(r)}\otimes_K\ov K
\longrightarrow
\Zcal_{\ov K}^{(r)}
\]
is immediate from the definition. We prove its injectivity by induction
on $r$. The case $r=1$ follows from
\cref{thm.Goncharov-Zagier conjecture for function field}~(ii).
Let $r\geq1$ and assume that the assertion holds in rank $r$.
It suffices to show that every $K$-linearly independent family
\[
F_1,\ldots,F_m\in\Zcal_K^{(r+1)}
\]
remains linearly independent over $\ov K$.

By \cref{thm:t-expansion-of-MES}, for each $1\leq i\leq m$, we may write
\[
F_i(\bfz)
=
\sum_{N\geq0}
\Delta_r^{-M(N)}\gamma_{i,N}(\bfz')
t_{\Lamz}(z_1)^N,
\]
where, for each $N\geq0$, the integer $M(N)\geq0$ is chosen sufficiently
large so that 
\[
\gamma_{i,N}(\bfz')\in\Zcal_K^{(r)}
\]
for all $1\leq i\leq m$. For each $N\geq0$, define the $K$-linear map
\[
\psi_N:K^{\oplus m}\longrightarrow\Zcal_K^{(r)},
\qquad
(c_1,\ldots,c_m)
\longmapsto
\sum_{i=1}^m c_i\gamma_{i,N}.
\]
Since $\Delta_r$ is nowhere vanishing, the uniqueness of the
$t_{\Lambda_{\bfz'}}$-expansion and the $K$-linear independence of
$F_1,\ldots,F_m$ imply that
\[
\bigcap_{N\geq0}\ker\psi_N=\{0\}.
\]
Since $K^{\oplus m}$ is finite-dimensional, there exists $N_0\geq0$ such that
\[
\bigcap_{N=0}^{N_0}\ker\psi_N=\{0\}.
\]
Hence the $K$-linear map
\[
\Psi:K^{\oplus m}
\longrightarrow
\bigl(\Zcal_K^{(r)}\bigr)^{\oplus(N_0+1)},
\qquad
\mathbf{c}
\longmapsto
\bigl(
\psi_0(\mathbf{c}),\ldots,\psi_{N_0}(\mathbf{c})
\bigr)
\]
is injective.

By the induction hypothesis,
\[
\Zcal_K^{(r)}\otimes_K\ov K
\cong
\Zcal_{\ov K}^{(r)}.
\]
Since $\ov K$ is flat over $K$, extending scalars from $K$ to $\ov K$
shows that
\[
\Psi_{\ov K}:
\ov K^{\oplus m}
\longrightarrow
\bigl(\Zcal_{\ov K}^{(r)}\bigr)^{\oplus(N_0+1)},
\]
defined by
\[
(c_1,\ldots,c_m)
\longmapsto
\left(
\sum_{i=1}^m c_i\gamma_{i,0},
\ldots,
\sum_{i=1}^m c_i\gamma_{i,N_0}
\right),
\]
is injective.

Now suppose that $c_1,\ldots,c_m\in\ov K$ satisfy
\[
\sum_{i=1}^m c_iF_i=0.
\]
Comparing the coefficients of
$t_{\Lambda_{\bfz'}}(z_1)^N$ and multiplying by
$\Delta_r^{M(N)}$, we obtain
\[
\sum_{i=1}^m c_i\gamma_{i,N}=0
\]
for every $N\geq0$. In particular, we have
\[
(c_1,\ldots,c_m)\in\ker\Psi_{\ov K}=0.
\]
Thus, $F_1,\ldots,F_m$ are $\ov K$-linearly independent. This proves
the injectivity of
\[
\Zcal_K^{(r+1)}\otimes_K\ov K
\longrightarrow
\Zcal_{\ov K}^{(r+1)},
\]
and completes the induction.

\begin{appendix}

\section{\texorpdfstring{\cref{thm.main-thm}}{Theorem 1.5} over \texorpdfstring{$\mathbb{C}_\infty$}{C∞}}\label{appendix}

In this appendix, we aim to prove the following stronger version of \cref{thm.main-thm} by adopting the key ideas from~\cite{BK26}, particularly the notion of ``shifted" MES in \cref{df.shifted MES}.

\begin{thm} \label{thm.main-thm over C_infty}
    Fix a positive integer $r\geq 2$.
    Then the following results hold.
    \begin{enumerate}
        \item $\Zcal^{(r)}_{\CC_\infty}$ is a graded algebra, i.e.,  \[\Zcal^{(r)}_{\CC_\infty} = \bigoplus_{w=0}^{\infty}\Zcal^{(r)}_{\CC_\infty,w}.\]
        \item The natural map $\mathcal{Z}^{(r)}_{K}\otimes_K \CC_\infty\twoheadrightarrow  \mathcal{Z}^{(r)}_{\CC_\infty}$ is an isomorphism of $\CC_\infty$-algebras. 
    \end{enumerate}
\end{thm}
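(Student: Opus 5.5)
The plan is to follow \cite{BK26}: replace the algebraic input of \S\ref{sec.proof of 1.5} (where weights of $t$-coefficients were detected by the induction hypothesis over $\ov K$) by a \emph{homogeneity} argument. Homogeneity sees weights regardless of the field of coefficients, which is why we expect it to work over $\CC_\infty$.

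\emph{Shifted MES and homogeneity.} For $\bfw=(w_1,\ldots,w_r)$ with $K_\infty$-linearly independent entries (no normalization $w_r=1$), put
\[
\wtd E_r(\ww{s};\bfw)=\sum_{\bff_1\succ\cdots\succ\bff_m\succ0}\prod_j\ang{\bff_j,\bfw}^{-s_j}.
\]
This is homogeneous of degree $-\wt(\ww{s})$, so $\wtd E_r(\ww{s};\bfw)=w_r^{-\wt(\ww{s})}E_r(\ww{s};\bfw/w_r)$. The ``shifted'' MES of \cref{df.shifted MES} should be the analogue of \cite{BK26}: one keeps the $\succ$-ordering on $A^r$ but lets one coordinate of the lattice vary independently, so that the family is not $\Gamma_r$-symmetric in that coordinate. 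The key structural step is a transformation formula: after a change of variables preserving $\ell(\bff)$ and the leading coefficient (so preserving $\succ$), $E_r(\ww{s};\cdot)$ at the transformed point equals a factor $j^{-\wt(\ww{s})}$ (with $j$ an automorphy-type factor in $\bfz$) times a shifted MES of the \emph{same} index.

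\emph{Proof of (i).} Suppose $\sum_w F_w=0$ with $F_w\in\Zcal^{(r)}_{\CC_\infty,w}$. Applying the transformation gives
\[
\sum_w j^{-w}\wtd F_w=0 ,
\]
where $\wtd F_w$ is the shifted version of $F_w$. We then need the $\wtd F_w$ to be independent of the free parameter carried by $j$, or at least to have expansions in it whose leading coefficients recover $F_w$. If so, a Vandermonde argument in the powers $j^{-w}$, with $j$ ranging over infinitely many values, separates the weights and gives $F_w=0$ for every $w$. The Goss expansion (\cref{prop.Goss expansion of MES}) and the $t_{\Lamz}$-expansion computations of \S\ref{sec.t-exp-MES} should provide these expansions of the shifted series.

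\emph{Proof of (ii).} We argue first for $r=2$, then induct.
\begin{itemize}
\item \emph{Rank two.} Take a $\CC_\infty$-relation among $K$-linearly independent $F_i\in\Zcal^{(2)}_K$. The $t$-expansion coefficients are rank-one objects, i.e. constants times $\wtd\pi$-powers. As in \cite{BK26}, we would then combine the homogeneous parameter with the $K$-rationality of the Goss polynomial coefficients. Together these should force the coefficients to lie in a $K$-structure, reducing to linear algebra over $K$ as in \S\ref{sec.proof of 1.5(ii)}: an injective map $\Psi$ base-changed along the flat extension $K\to\CC_\infty$.
\item \emph{Higher rank.} Run verbatim the induction of \S\ref{sec.proof of 1.5(ii)}, starting from $r=2$, with $\ov K$ replaced by $\CC_\infty$. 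This uses \cref{thm:t-expansion-of-MES}, the fact that $\Delta_r$ is nowhere vanishing, and flatness of $\CC_\infty$ over $K$.
\end{itemize}

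\emph{Main obstacle.} The crux is the rank-two base case, together with establishing the transformation formula for shifted MES. The $r=1$ analogue over $\CC_\infty$ is false, since Thakur MZVs are constants. So the argument must genuinely use the variable $z_1$, through the homogeneity factor, rather than reducing to rank one, and one must check that $\succ$ is preserved by the chosen substitution. I would first try substitutions of the form $f_r\mapsto f_r+af_1$ on the $\bff$-side, paired with the dual substitution on $\bfw$. The point to verify is that these give a non-$\Gamma_2$ deformation; if instead they only reproduce the translation $z_1\mapsto z_1+a$, they yield no information and a different choice is needed.
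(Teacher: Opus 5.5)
You have the right outline: shifted MES for (i), then a rank-two base case followed by the induction of \S\ref{sec.proof of 1.5(ii)} for (ii). The higher-rank step is exactly what the paper does. But both substantive steps have a genuine gap.

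\textbf{Part (i).} The transformation you describe carries no information. Suppose $\bff\mapsto\bff\gamma$, with $\gamma\in\GL_r(A)$, preserves $\ell(\bff)$ and the leading coefficient for every $\bff$. Testing on the unit vectors shows $\gamma$ is upper unitriangular. Then $\gamma\bfz^{\tr}$ still has last coordinate $1$, so the automorphy factor is $j\equiv 1$ and you only recover $\Gamma_r$-invariance. Your trial substitution $f_r\mapsto f_r+af_1$ is literally $z_1\mapsto z_1+a$, as you feared. Rescaling via homogeneity does not move the normalized point either. So your Vandermonde step, which needs parameter-independent $\wtd{F}_w$ against a nonconstant $j$, has nothing to act on.

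The missing idea is to give up exact preservation of $\succ$. The paper takes $\gamma_N\in\SL_r(A)$ acting on the last two coordinates by $\left(\begin{smallmatrix}\theta^N&-1\\ 1&0\end{smallmatrix}\right)$ and transports the order to $\succ_N$. It then checks that $\succ_N$ agrees with $\succ$ on vectors with $|\bff|<q^N$. This yields two facts at once:
\begin{itemize}
\item the exact identity $E_r(\ww{s};\bfz_N)=j_N(\bfz)^{\wt(\ww{s})}E_r^{\ang{N}}(\ww{s};\bfz)$, where $|j_N(\bfz)|=|\theta^N-z_{r-1}|=q^N$;
\item the estimate $|E_r^{\ang{N}}(\ww{s};\bfz)-E_r(\ww{s};\bfz)|\le c(\bfz)q^{-N}$.
\end{itemize}
Weights are then separated by a limit, not by Vandermonde. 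Evaluate $\sum_i F_i=0$ at $\bfz_N$, divide by $j_N(\bfz)^{w}$, and let $N\to\infty$ to get $F_w(\bfz)=0$; then descend to lower weights.

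\textbf{Rank two in (ii).} The claim that the data ``should lie in a $K$-structure'' is not an argument, and for the MES themselves it is false. By the Goss expansion, the $t$-coefficients of $E_2(\ww{s};\bfz)$ involve the Thakur MZVs $\zeta_A(\ww{s}^{(i)})$. These are in general not $K$-multiples of powers of $\wtd{\pi}$, which is exactly the rank-one obstruction over $\CC_\infty$.

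The paper instead isolates a part free of MZVs.
\begin{itemize}
\item After normalizing by $\wtd{\pi}^{-\wt(\ww{s})}$, the multiple Goss sums $\wtd{G}_2(\ww{s};\bfz)$ have $t$-expansion coefficients in $K$, because $\Lambda_{\bfz'}=A$ and the Carlitz exponential is $K$-rational. Hence the filtration $\wtd{\Gcal}^{(2)}_{\le w}$ and its graded pieces are defined over $K$.
\item Using part (i), the map $\wtd{E}_2(\ww{s};\bfz)\mapsto\wtd{G}_2(\ww{s};\bfz)\bmod\wtd{\Gcal}^{(2)}_{L,\le w-1}$ is an isomorphism $\wtd{\Zcal}^{(2)}_{L,w}\cong\gr_w\wtd{\Gcal}^{(2)}_{L}$ for $L=K$ and $L=\CC_\infty$.
\item Over $K$, well-definedness needs $\wtd{\Gcal}^{(2)}_{K,\le w}\cap\wtd{\Gcal}^{(2)}_{\CC_\infty,\le w-1}=\wtd{\Gcal}^{(2)}_{K,\le w-1}$.
\end{itemize}
The lower-weight terms carrying MZVs are thereby discarded using (i), a dependence your sketch does not exploit. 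Without this mechanism, $K$-linear independence of the $F_i$ does not pass to $\CC_\infty$-linear independence.
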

Note that when $r=1$, MES are Thakur's MZVs, and the corresponding base-change statement over $\CC_\infty$ does not hold. We first mention the following immediate consequence concerning the non-vanishing of rank two MGS.
\begin{cor}\label{cor.non-vanishing of MGS (rank 2)}
    For any index $\ww{s}$, 
    \[
    G_2(\ww{s}; \mathbf{z})\ne 0.
    \]
\end{cor}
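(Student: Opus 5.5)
The corollary is called an "immediate consequence" of \cref{thm.main-thm over C_infty}. The plan is to argue by contradiction, using the Goss expansion in rank two together with the direct sum result in rank two over $\CC_\infty$. We induct on the depth of $\ww{s}$; for $\ww{s}=\emptyset$ we have $G_2(\emptyset;\bfz)=1\neq 0$ by convention.

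Let $\ww{s}=(s_1,\ldots,s_m)$ be nonempty and suppose $G_2(\ww{s};\bfz)\equiv 0$. By \cref{prop.Goss expansion of MES} with $r=2$,
\[
E_2(\ww{s};\bfz)=E_1(\ww{s};\bfz')+\sum_{i=1}^{m}E_1(\ww{s}^{(i)};\bfz')\,G_2(\ww{s}_{(i)};\bfz).
\]
Here $\bfz'=(1)$, so each $E_1(\ww{s}^{(i)};\bfz')=\zeta_A(\ww{s}^{(i)})$ is a constant in $K_\infty\subset\CC_\infty$. The $i=m$ summand is $G_2(\ww{s};\bfz)$, which vanishes by assumption. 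Each remaining $G_2(\ww{s}_{(i)};\bfz)$ with $i<m$ can in turn be rewritten by applying the Goss expansion to the shorter index $\ww{s}_{(i)}$:
\[
G_2(\ww{s}_{(i)};\bfz)=E_2(\ww{s}_{(i)};\bfz)-\sum_{j<i}\zeta_A(\ww{s}_{(i)}^{(j)})\,G_2((\ww{s}_{(i)})_{(j)};\bfz),
\]
where $j=0$ gives the term $\zeta_A(\ww{s}_{(i)})$. Since $(\ww{s}_{(i)})_{(j)}=\ww{s}_{(j)}$, recursing downward in $i$ expresses every $G_2(\ww{s}_{(i)};\bfz)$, $i<m$, as a $\CC_\infty$-linear combination of $E_2(\ww{s}_{(j)};\bfz)$ for $j\le i$, where $E_2(\emptyset)=1$. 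The coefficients are polynomials in Thakur MZVs.

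Substituting these expressions, $E_2(\ww{s};\bfz)$ becomes a $\CC_\infty$-linear combination of $E_2(\ww{s}_{(j)};\bfz)$ for $0\le j\le m-1$. Every such index has weight strictly less than $\wt(\ww{s})$, since all $s_k\ge 1$. Hence $E_2(\ww{s};\bfz)\in\Zcal^{(2)}_{\CC_\infty,\wt(\ww{s})}$ also lies in $\sum_{w<\wt(\ww{s})}\Zcal^{(2)}_{\CC_\infty,w}$. By \cref{thm.main-thm over C_infty}(i) with $r=2$, the sum over weights is direct, so $E_2(\ww{s};\bfz)=0$.

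It remains to contradict $E_2(\ww{s};\bfz)=0$. By \cite[Proposition~2.10]{CCHT26}, the constant term of the $t_{\Lamz}$-expansion of $E_2(\ww{s};\bfz)$ is $E_1(\ww{s};\bfz')=\zeta_A(\ww{s})$. Thakur's MZVs are nonzero, since $\zeta_A(\ww{s})$ has a dominant term of absolute value $1$ in $K_\infty$. So $E_2(\ww{s};\bfz)\neq 0$, a contradiction, and therefore $G_2(\ww{s};\bfz)\neq 0$.

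The main obstacle is the bookkeeping in the recursive elimination: one must make sure the final identity really expresses $E_2(\ww{s};\bfz)$ through indices of strictly smaller weight only. This holds because the whole recursion involves just the prefixes $\ww{s}_{(j)}$ with $j<m$.
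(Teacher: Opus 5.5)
Your argument follows the paper's proof. Assuming $G_2(\ww{s};\bfz)=0$, the Goss expansion puts $E_2(\ww{s};\bfz)$ in $\sum_{w<\wt(\ww{s})}\Zcal^{(2)}_{\CC_\infty,w}$. Then \cref{thm.main-thm over C_infty}(i) forces $E_2(\ww{s};\bfz)=0$, and the constant term $\zeta_A(\ww{s})$ of its $t_{\Lamz}$-expansion gives the contradiction. Your triangular elimination, which rewrites each $G_2(\ww{s}_{(i)};\bfz)$ with $i<m$ in terms of the $E_2(\ww{s}_{(j)};\bfz)$ with $j\le i$, is correct. It makes explicit a step the paper leaves implicit. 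The induction on depth you announce is never used, which is harmless.

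The one genuine error is your justification of $\zeta_A(\ww{s})\neq 0$. For $m=\dep(\ww{s})\ge 2$, every summand has $\deg f_1\ge m-1\ge 1$, so its absolute value is at most $q^{-s_1}<1$. Hence there is no term of absolute value $1$, and in fact $|\zeta_A(\ww{s})|\le q^{-s_1}$. Nor does any single summand dominate, because the summands of fixed degrees cancel heavily. For $\ww{s}=(1,1)$, the leading block is $S_1(1)=\sum_{a\in\FF_q}(\theta+a)^{-1}=-1/(\theta^q-\theta)$. It has absolute value $q^{-q}$, although each of its $q$ terms has absolute value $q^{-1}$.

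Nonvanishing in depth $\ge 2$ is therefore a genuine theorem. Its proof (Thakur) rests on showing that the power sums $S_d(k)=\sum_{f\in A_+,\,\deg f=d}f^{-k}$ are nonzero and that $|S_d(k)|$ is strictly decreasing in $d$. Consequently, in $\zeta_A(\ww{s})=\sum_{d_1>\cdots>d_m\ge 0}\prod_{i}S_{d_i}(s_i)$ the single tuple $(m-1,\ldots,1,0)$ strictly dominates. Your heuristic is valid only in depth one. Replace it by a citation of \cite[Theorem~4]{Tha09}, as the paper does, and the proof is complete.
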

\begin{proof}
    If $G_2(\ww{s}; \mathbf{z}) = 0$, then 
    \[
    E_2(\ww{s}; \mathbf{z}) = \sum_{i=0}^{\dep(\ww{s})-1}\zeta_A(\ww{s}^{(i)})G_2(\ww{s}_{(i)}; \mathbf{z})\in \sum_{w=0}^{\wt(\ww{s})-1}\mathcal{Z}^{(2)}_{\mathbb{C}_{\infty}, w}.
    \]
    By \cref{thm.main-thm over C_infty}, this implies
\[
E_2(\ww{s};\mathbf z)=0.
\]
On the other hand, by \cite[Proposition~2.10]{CCHT26}, the constant term
of the $t_{\Lamz}(z_1)$-expansion of $E_2(\ww{s};\mathbf z)$ is
$E_1(\ww{s})=\zeta_A(\ww{s})$,
which is nonzero by \cite[Theorem~4]{Tha09}. This is a contradiction.
\end{proof}

\subsection{Proof of \texorpdfstring{\cref{thm.main-thm over C_infty}~(i)}{Theorem A.1 (i)}}

For $N\geq1$, we let
\begin{equation*}
    \gamma_N
    =
    \left(
    \begin{array}{c|cc}
    I_{r-2} & 0 & 0\\
    \hline
    0 & \theta^N & -1\\
    0 & 1 & 0
    \end{array}
    \right)
    \in \SL_r(A)
    \subseteq
    \GL_r(A),
\end{equation*}
and define a new partial order $\succ_N$ on $A^r$ as follows.

\begin{df}
    For $\bm{f},\bm{g}\in A^r$, we define
    \[
    \bm{f}\succ_N0
    \quad\Longleftrightarrow\quad
    \bm{f}\gamma_N\succ0,
    \]
    and
    \[
    \bm{f}\succ_N\bm{g}
    \quad\Longleftrightarrow\quad
    \bm{f}\gamma_N\succ\bm{g}\gamma_N.
    \]
\end{df}

For $\bfz=(z_1,\ldots,z_r)\in \CC_\infty^r$, we put
\[
|\bfz|
:=
\max_{1\leq i\leq r}|z_i|.
\]

\begin{lem}\label{lem:comparison between >_N and >}
    Let $\bm{f},\bm{g}\in A^r$ be nonzero with
    $|\bm{f}|,|\bm{g}|<q^N$. Then
    \begin{enumerate}
        \item $\bm{f}\succ_N0$ if and only if $\bm{f}\succ0$.
        \item $\bm{f}\succ_N\bm{g}$ if and only if $\bm{f}\succ\bm{g}$.
    \end{enumerate}
\end{lem}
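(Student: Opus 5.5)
We compute $\bm{f}\gamma_N$ explicitly and compare it with $\bm{f}$ coordinate by coordinate. For $\bm{f}=(f_1,\ldots,f_r)$, right multiplication by $\gamma_N$ gives
\[
\bm{f}\gamma_N=(f_1,\ldots,f_{r-2},\,\theta^N f_{r-1}+f_r,\,-f_{r-1}).
\]
Here $|\bm{f}|<q^N$ means $\deg f_i<N$ for all $i$ (reading $|\cdot|$ coordinatewise on $A\subset\CC_\infty$). The proof is then a case analysis on $\ell(\bm{f})$.

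For (i), suppose first that $\ell(\bm{f})\le r-2$. Then the first $r-2$ coordinates are unchanged, so $\ell(\bm{f}\gamma_N)=\ell(\bm{f})$ and the leading coordinate is the same. Hence $\bm{f}\gamma_N\succ0$ if and only if $\bm{f}\succ 0$.

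Next suppose $\ell(\bm{f})=r-1$. The $(r-1)$-st coordinate of $\bm{f}\gamma_N$ is $\theta^Nf_{r-1}+f_r$. Since $\deg f_r<N\le \deg(\theta^Nf_{r-1})$, this coordinate is nonzero and has the same leading coefficient as $f_{r-1}$. Its degree is $N+\deg f_{r-1}$. So $\ell(\bm{f}\gamma_N)=r-1$, and monicity is preserved in both directions.

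Finally suppose $\ell(\bm{f})=r$, i.e.\ $\bm{f}=(0,\ldots,0,f_r)$. Then $\bm{f}\gamma_N=(0,\ldots,0,f_r,0)$, so $\ell(\bm{f}\gamma_N)=r-1$ with leading coordinate $f_r$. Again the monicity conditions agree.

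In all three cases, $\ell(\bm{f}\gamma_N)=\ell(\bm{f})$ except that level $r$ is sent to level $r-1$.

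For (ii), we may assume $\bm{f},\bm{g}\succ0$, equivalently $\succ_N0$ by (i), since both orders as defined compare only positive elements. We then compare the pairs $(\ell,\deg)$.

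If $\ell(\bm{f})$ and $\ell(\bm{g})$ are both at most $r-2$, everything is unchanged. If one of them is at most $r-2$ and the other is at least $r-1$, the smaller-$\ell$ element wins under both orders, since the images stay at levels $\le r-2$ and $=r-1$ respectively.

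The main work is when both lie in $\{r-1,r\}$, because then both images have $\ell=r-1$ and the comparison is by degree. We have:
\begin{itemize}
\item for $\ell=r-1$, the degree of the image is $N+\deg f_{r-1}\ge N$;
\item for $\ell=r$, the degree of the image is $\deg f_r<N$.
\end{itemize}
So in the mixed case the level-$(r-1)$ element is larger in both orders, matching $\ell(\bm{f})<\ell(\bm{g})$. If both are at level $r-1$, the image degrees are $N+\deg f_{r-1}$ and $N+\deg g_{r-1}$, which compare exactly as $\deg f_{r-1}$ and $\deg g_{r-1}$. If both are at level $r$, the degrees are unchanged.

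The only delicate point is this mixed $\{r-1,r\}$ case. It is precisely where the bound $\deg<N$ is used: it guarantees that the $\theta^N$ shift puts every level-$(r-1)$ image strictly above every level-$r$ image.
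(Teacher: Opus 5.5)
Your proof is correct and follows the same route as the paper: you compute $\bm{f}\gamma_N=(f_1,\ldots,f_{r-2},\theta^Nf_{r-1}+f_r,-f_{r-1})$ and split into the cases $\ell(\bm{f})\le r-2$, $\ell(\bm{f})=r-1$ and $\ell(\bm{f})=r$, using $\deg f_r<N\le N+\deg f_{r-1}$ in the middle case. For (ii), you reduce to positive elements via (i) and handle the mixed $\{r-1,r\}$ case, where the bound $\deg<N$ places every level-$(r-1)$ image above every level-$r$ image; this spells out what the paper leaves as ``a similar argument.''
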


\begin{proof}
    Write $\bm{f}=(f_1,\ldots,f_r)$. We have
    \[
    \bm{f}\gamma_N
    =
    (f_1,\ldots,f_{r-2},
    \theta^Nf_{r-1}+f_r,-f_{r-1}).
    \]
    If $\ell(\bm{f})\leq r-2$, the first nonzero coordinate of
    $\bm{f}\gamma_N$ is the same as that of $\bm{f}$, so the first
    assertion is immediate. If $\ell(\bm{f})=r-1$, then
    $\deg f_r<N$, while
    \[
    \deg(\theta^Nf_{r-1})
    =
    N+\deg f_{r-1}\geq N.
    \]
    Hence $\theta^Nf_{r-1}+f_r$ is monic if and only if
    $f_{r-1}$ is monic. Finally, if $\ell(\bm{f})=r$, then
    \[
    \bm{f}\gamma_N=(0,\ldots,0,f_r,0),
    \]
    so again $\bm{f}\succ_N0$ if and only if $\bm{f}\succ0$.
    This proves (i). The proof of (ii) follows from a similar argument.
\end{proof}

Using the partial order $\succ_N$, we define a ``shifted'' MES as follows.

\begin{df}\label{df.shifted MES}
    For any nonempty index $\ww{s}=(s_1,\ldots,s_m)$, we define
    \[
    E_r^{\langle N\rangle}(\ww{s};\mathbf{z})
    =
    \sum_{\substack{
    \bm{f}_1,\ldots,\bm{f}_m\in A^r\\
    \bm{f}_1\succ_N\cdots\succ_N\bm{f}_m\succ_N0
    }}
    \frac{1}{
    \langle\bm{f}_1,\mathbf{z}\rangle^{s_1}
    \cdots
    \langle\bm{f}_m,\mathbf{z}\rangle^{s_m}
    }.
    \]
    By convention, we put $E_r^{\ang{N}}(\varnothing;\bfz)=1$.
\end{df}

The next proposition compares
$E_r^{\langle N\rangle}(\ww{s};\mathbf{z})$
with the original MES. It shows that the shifted
series converges to $E_r(\ww{s};\mathbf{z})$ as $N\to\infty$ and relates
it to the action of $\gamma_N$.

\begin{prop}\label{prop.shifted MES converges to MES}
    Let $\ww{s}$ be an index.
    \begin{enumerate}
        \item
        For every $\mathbf{z}\in\Omega^r$, there exists a constant
        $c(\mathbf{z})>0$ such that
        \[
        \left|
        E_r^{\langle N\rangle}(\ww{s};\mathbf{z})
        -
        E_r(\ww{s};\mathbf{z})
        \right|
        \leq
        c(\mathbf{z})q^{-N}
        \]
        for every $N\geq1$.
    
        \item
        For $\mathbf{z}=(z_1,\ldots,z_{r-1},1)\in\Omega^r$, put
        \[
        j_N(\mathbf{z})
        :=
        \theta^N-z_{r-1}.
        \]
        Then
        \[
        \mathbf{z}_N
        :=
        \left(
        \frac{z_1}{j_N(\mathbf{z})},
        \ldots,
        \frac{z_{r-2}}{j_N(\mathbf{z})},
        \frac{1}{j_N(\mathbf{z})},
        1
        \right)\in\Omega^r,
        \]
        and
        \[
        E_r(\ww{s};\mathbf{z}_N)
        =
        j_N(\mathbf{z})^{\wt(\ww{s})}
        E_r^{\langle N\rangle}(\ww{s};\mathbf{z}).
        \]
    \end{enumerate}
\end{prop}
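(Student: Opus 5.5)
The plan is to prove (ii) first, because it is a purely algebraic change of variables, and then to prove (i) with a nonarchimedean tail estimate based on \cref{lem:comparison between >_N and >}. The case $\ww{s}=\emptyset$ is trivial in both parts, so assume $\ww{s}=(s_1,\ldots,s_m)$ is nonempty.

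\emph{Part (ii).} First I check that $\mathbf{z}_N\in\Omega^r$.
\begin{itemize}
\item $j_N(\mathbf{z})\neq0$, since $z_{r-1}\notin K_\infty$.
\item Multiplying $\mathbf{z}_N$ by $j_N(\mathbf{z})$ gives the entries $z_1,\ldots,z_{r-2},1,\theta^N-z_{r-1}$. These span the same $K_\infty$-space as $z_1,\ldots,z_{r-1},1$, so they are $K_\infty$-linearly independent.
\end{itemize}
The key identity is that for $\bm{f}\in A^r$,
\[
\langle \bm{f}\gamma_N,\mathbf{z}_N\rangle=\frac{\langle\bm{f},\mathbf{z}\rangle}{j_N(\mathbf{z})}.
\]
To see this, write $\bm{f}\gamma_N=(f_1,\ldots,f_{r-2},\theta^Nf_{r-1}+f_r,-f_{r-1})$ and pair it with $\mathbf{z}_N$. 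The numerator becomes
\[
\sum_{i\le r-2}f_iz_i+\theta^Nf_{r-1}+f_r-f_{r-1}(\theta^N-z_{r-1})=\langle\bm{f},\mathbf{z}\rangle.
\]
Now expand $E_r(\ww{s};\mathbf{z}_N)$ as a sum over chains $\bm{g}_1\succ\cdots\succ\bm{g}_m\succ0$ and substitute $\bm{g}_i=\bm{f}_i\gamma_N$. This substitution is a bijection of $A^r$ because $\gamma_N\in\GL_r(A)$. By definition of $\succ_N$, the chain condition becomes $\bm{f}_1\succ_N\cdots\succ_N\bm{f}_m\succ_N0$. Each factor contributes $j_N(\mathbf{z})^{s_i}$, so
\[
E_r(\ww{s};\mathbf{z}_N)=j_N(\mathbf{z})^{\wt(\ww{s})}E_r^{\langle N\rangle}(\ww{s};\mathbf{z}).
\]
This also shows that the shifted series converges (unconditionally, in the nonarchimedean sense), since it is a rearrangement of a convergent series.

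\emph{Part (i).} I need two lattice estimates for fixed $\mathbf{z}\in\Omega^r$:
\begin{itemize}
\item a constant $C(\mathbf{z})>0$ with $|\langle\bm{f},\mathbf{z}\rangle|\ge C(\mathbf{z})|\bm{f}|$ for all $\bm{f}\in A^r$ (with $|\bm{f}|=\max_i|f_i|$, as defined before the lemma);
\item $\delta(\mathbf{z}):=\min_{\bm{f}\ne0}|\langle\bm{f},\mathbf{z}\rangle|>0$.
\end{itemize}
The first is the standard fact that $K_\infty$-linear independence gives a norm equivalence, via the finite-dimensional $K_\infty$-span of $z_1,\ldots,z_r$ inside $\CC_\infty$. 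The second follows from discreteness of $\Lambda_{\mathbf{z}}$.

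Let $T_N$ be the set of tuples $(\bm{f}_1,\ldots,\bm{f}_m)$ of nonzero vectors with all $|\bm{f}_i|<q^N$. By \cref{lem:comparison between >_N and >}, a tuple in $T_N$ satisfies the $\succ$-chain condition if and only if it satisfies the $\succ_N$-chain condition. Hence the parts of $E_r$ and $E_r^{\langle N\rangle}$ indexed by $T_N$ coincide. The difference is therefore a convergent sum of terms indexed by tuples in which some $|\bm{f}_i|\ge q^N$.

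By the ultrametric inequality, the absolute value of this difference is at most the supremum of the absolute values of such terms. For such a term, the factor with $|\bm{f}_i|\ge q^N$ is bounded by $(C(\mathbf{z})q^N)^{-s_i}\le C(\mathbf{z})^{-s_i}q^{-N}$, using $s_i\ge1$. Every other factor is bounded by $\delta(\mathbf{z})^{-s_j}$. Taking $c(\mathbf{z})=\max_i C(\mathbf{z})^{-s_i}\prod_{j\ne i}\delta(\mathbf{z})^{-s_j}$ gives the claimed bound.

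The main obstacle I expect is bookkeeping rather than any deep difficulty. One point is justifying the splitting of both series into the $T_N$-part and a remainder; this is fine because in $\CC_\infty$ a series whose terms tend to $0$ may be freely rearranged. The other is establishing the lower bound $|\langle\bm{f},\mathbf{z}\rangle|\ge C(\mathbf{z})|\bm{f}|$, which I would cite or derive from the equivalence of norms on finite-dimensional $K_\infty$-spaces.
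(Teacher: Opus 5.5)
Your proposal is correct and follows essentially the same route as the paper. For (ii) you use the identity $\langle \bm{f}\gamma_N,\mathbf{z}_N\rangle=\langle\bm{f},\mathbf{z}\rangle/j_N(\mathbf{z})$, which is equivalent to the paper's $\gamma_N\mathbf{z}_N^{\tr}=j_N(\mathbf{z})^{-1}\mathbf{z}^{\tr}$, followed by the change of variables by $\gamma_N$. For (i) you use the comparison lemma for $\succ_N$ and $\succ$ together with the bound $|\langle\bm{f},\mathbf{z}\rangle|\geq C(\mathbf{z})|\bm{f}|$; your ultrametric tail estimate simply spells out the ``standard estimate'' the paper leaves implicit.
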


\begin{proof}
    We may assume $\ww{s}=(s_1,\ldots,s_m)$ is nonempty. By
\cref{lem:comparison between >_N and >}, the two summation sets
defining $E_r^{\langle N\rangle}(\ww{s};\mathbf{z})$ and
$E_r(\ww{s};\mathbf{z})$ agree whenever
\[
|\bm{f}_1|,\ldots,|\bm{f}_m|<q^N.
\]
Hence their difference is supported on tuples for which
$|\bm{f}_i|\geq q^N$ for some $1\leq i\leq m$.
On the other hand, notice that for each fixed $\mathbf{z}\in\Omega^r$, the $K_\infty$-linear map
\[
K_\infty^r\longrightarrow\CC_\infty,
\qquad
\bm{f}\longmapsto\langle\bm{f},\mathbf{z}\rangle,
\]
is injective. Since $K_\infty^r$ is finite-dimensional, there exists
$c_0(\mathbf{z})>0$ such that
\[
|\langle\bm{f},\mathbf{z}\rangle|
\geq
c_0(\mathbf{z})|\bm{f}|
\qquad
(\bm{f}\in K_\infty^r).
\]
Thus, a standard estimate shows that there exists a constant $c(\bfz)>0$, independent of $N$, such that
\[
\left|
E_r^{\langle N\rangle}(\ww{s};\mathbf{z})
-
E_r(\ww{s};\mathbf{z})
\right|
\leq
c(\mathbf{z})q^{-N}.
\]
This proves (i).
    
    For (ii), one checks that
    \[
    \gamma_N\mathbf{z}_N^{\tr}
    =
    j_N(\mathbf{z})^{-1}\mathbf{z}^{\tr},
    \]
    which implies $\mathbf{z}_{N}\in \Omega^{r}$.
    It follows that
    \begin{align*}
    E_r(\ww{s};\mathbf{z}_N)
    &=
    \sum_{\substack{
    \bm{f}_1,\ldots,\bm{f}_m\in A^r\\
    \bm{f}_1\succ\cdots\succ\bm{f}_m\succ0
    }}
    \frac{1}{
    (\bm{f}_1\mathbf{z}_N^{\tr})^{s_1}
    \cdots
    (\bm{f}_m\mathbf{z}_N^{\tr})^{s_m}
    }
    \\
    &=
    j_N(\mathbf{z})^{\wt(\ww{s})}
    \sum_{\substack{
    \bm{f}_1,\ldots,\bm{f}_m\in A^r\\
    \bm{f}_1\succ\cdots\succ\bm{f}_m\succ0
    }}
    \frac{1}{
    (\bm{f}_1\gamma_N^{-1}\mathbf{z}^{\tr})^{s_1}
    \cdots
    (\bm{f}_m\gamma_N^{-1}\mathbf{z}^{\tr})^{s_m}
    }
    \\
    &=
    j_N(\mathbf{z})^{\wt(\ww{s})}
    \sum_{\substack{
    \bm{f}_1,\ldots,\bm{f}_m\in A^r\\
    \bm{f}_1\succ\cdots\succ\bm{f}_m\succ0
    }}
    \frac{1}{
    \ang{\bm{f}_1\gamma_N^{-1},\mathbf{z}}^{s_1}
    \cdots
    \ang{\bm{f}_m\gamma_N^{-1},\mathbf{z}}^{s_m}
    }
    \\
    &=
    j_N(\mathbf{z})^{\wt(\ww{s})}
    \sum_{\substack{
    \bm{f}_1,\ldots,\bm{f}_m\in A^r\\
    \bm{f}_1\succ_N\cdots\succ_N\bm{f}_m\succ_N0
    }}
    \frac{1}{
    \langle\bm{f}_1,\mathbf{z}\rangle^{s_1}
    \cdots
    \langle\bm{f}_m,\mathbf{z}\rangle^{s_m}
    }
    \\
    &=
    j_N(\mathbf{z})^{\wt(\ww{s})}
    E_r^{\langle N\rangle}(\ww{s};\mathbf{z}),
    \end{align*}
    where the fourth equality follows by the change of variables
    $\bm{f}_i\mapsto\bm{f}_i\gamma_N^{-1}$ and the definition of
    $\succ_N$. This proves (ii).
\end{proof}

\begin{proof}[Proof of \cref{thm.main-thm over C_infty}~(i)]
    Suppose that
    \[
    F_0+F_1+\cdots+F_w=0,
    \]
    where $F_i\in\Zcal_{\CC_\infty,i}^{(r)}$ for every $0\leq i\leq w$.
    For each $i$, choose an expression
    \[
    F_i(\bfz)
    =
    \sum_{j=1}^{m_i}
    c_{i,j}E_r(\ww{s}_{i,j};\bfz),
    \]
    where $c_{i,j}\in\CC_\infty$ and
    $\wt(\ww{s}_{i,j})=i$, and put
    \[
    F_i^{\ang{N}}(\bfz)
    :=
    \sum_{j=1}^{m_i}
    c_{i,j}E_r^{\ang{N}}(\ww{s}_{i,j};\bfz).
    \]
    By \cref{prop.shifted MES converges to MES}, as $N\to\infty$,
    \[
    F_i^{\ang{N}}(\bfz)\longrightarrow F_i(\bfz).
    \]
    
    Evaluating the given relation at $\bfz_N$ and applying
    \cref{prop.shifted MES converges to MES} again, we obtain
    \[
    0
    =
    \sum_{i=0}^{w}F_i(\bfz_N)
    =
    \sum_{i=0}^{w}
    j_N(\bfz)^iF_i^{\ang{N}}(\bfz).
    \]
    Dividing by $j_N(\bfz)^w$ yields
    \[
    F_w^{\ang{N}}(\bfz)
    =
    -\sum_{i=0}^{w-1}
    j_N(\bfz)^{i-w}F_i^{\ang{N}}(\bfz).
    \]
    Since $F_i^{\ang{N}}(\bfz)\to F_i(\bfz)$ as $N\to\infty$, the sequences
    $\{F_i^{\ang{N}}(\bfz)\}_{N\geq1}$ are bounded. Moreover, for $N\gg0$,
    \[
    |j_N(\bfz)|^{-1}
    =
    |\theta^N-z_{r-1}|^{-1}
    =
    q^{-N}\longrightarrow0.
    \]
    Hence, letting $N\to\infty$ in the preceding identity yields
    \[
    F_w(\bfz)=0.
    \]
    Repeating the same argument for
    $F_0+\cdots+F_{w-1}=0$, we obtain
    \[
    F_0=F_1=\cdots=F_w=0.
    \]
    Therefore,
    \[
    \Zcal_{\CC_\infty}^{(r)}
    =
    \bigoplus_{w\geq0}\Zcal_{\CC_\infty,w}^{(r)}.
    \]
\end{proof}

\subsection{Proof of \texorpdfstring{\cref{thm.main-thm over C_infty}~(ii)}{Theorem A.1 (ii)}}
Recall that in \cref{rmk:rank 1}, $\wtd{\pi}$ denotes a fixed fundamental period of the Carlitz $A$-module. For an index $\ww{s}$, let
\[
\wtd{\zeta}_A(\ww{s}) = \wtd{\pi}^{-\wt{(\ww{s})}}\zeta_A(\ww{s}),\qquad \wtd{G}_2(\ww{s};\bfz) = \wtd{\pi}^{-\wt(\ww{s})} G_2(\ww{s};\bfz),\qquad \wtd{E}_2(\ww{s};\bfz) = \wtd{\pi}^{-\wt(\ww{s})} E_2(\ww{s};\bfz)
\]
be the normalized MZV, MGS and MES of rank two.
By \cref{prop.Goss expansion of MES}, we have
\begin{equation}\label{eq.normalized MES Goss expansion}
    \wtd{E}_2(\ww{s};\bfz) = \wtd{\zeta}_A(\ww{s}) + \sum_{i=1}^{\dep(\ww{s})} \wtd{\zeta}_A(\ww{s}^{(i)})\wtd{G}_2(\ww{s}_{(i)};\bfz).
\end{equation}

For any subfield $L$ of $\CC_\infty$ containing $K$ and integer $w\ge 0$, we define
\[
\wtd{\Zcal}_{L,w}^{(2)} := \Span_L\{\wtd{E}_2(\ww{s};\bfz) \mid \wt{(\ww{s})} = w\},\qquad \wtd{\Gcal}_{L,\le w}^{(2)} := \Span_L\{\wtd{G}_2(\ww{s};\bfz) \mid \wt{(\ww{s})} \le w\}.
\]
For $w\ge 0$, we further define
\[
\gr_w\wtd{\Gcal}_{L}^{(2)} = \wtd{\Gcal}_{L,\le w}^{(2)}/\wtd{\Gcal}_{L,\le w-1}^{(2)}
\]
where $\wtd{\Gcal}_{L,\le -1}^{(2)}:=0$.
We note that we have a natural $L$-vector space isomorphism given by
\begin{equation}\label{eq. isom of nonnormalized and normalized MES}
    \Zcal_{L,w}^{(2)} \xrightarrow{\sim} \wtd{\Zcal}_{L,w}^{(2)},\qquad E_2(\ww{s};\bfz) \mapsto \wtd{E}_2(\ww{s};\bfz).
\end{equation}

\begin{prop}\label{prop.Gcal is defined over K}
    For every integer $w\ge 0$, the natural map
    \[
    \wtd{\Gcal}_{K,\le w}^{(2)} \otimes_K \CC_\infty \xrightarrow{\sim} \wtd{\Gcal}_{\CC_\infty, \le w}^{(2)}
    \]
    is an isomorphism.
\end{prop}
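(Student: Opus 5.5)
The plan is to show that every normalized multiple Goss sum $\wtd{G}_2(\ww{s};\bfz)$ has a $u$-expansion, in a suitably normalized uniformizer $u$ at infinity, whose coefficients all lie in $K$. Once this holds, the statement is pure linear algebra over power series. Surjectivity of $\wtd{\Gcal}_{K,\le w}^{(2)}\otimes_K\CC_\infty\to\wtd{\Gcal}_{\CC_\infty,\le w}^{(2)}$ is clear, so only injectivity needs proof.

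\emph{Step 1 (normalization).} In rank two we have $\bfz'=1$ and $\Lamz=A$. For the Carlitz lattice $\wtd{\pi}A$ one has $t_{\wtd{\pi}A}(\wtd{\pi}X)=\wtd{\pi}^{-1}t_A(X)$. Using \cref{prop.G_a(t(X)) = sum 1/(X+λ)^a}(i), this gives
\[
\wtd{\pi}^{-s}G_s^{A}(t_A(fz_1))=\sum_{a\in A}\frac{1}{(\wtd{\pi}fz_1+\wtd{\pi}a)^s}=G_s^{\wtd{\pi}A}\bigl(t_{\wtd{\pi}A}(\wtd{\pi}fz_1)\bigr).
\]
Since $\exp_{\wtd{\pi}A}$ is the Carlitz exponential, whose coefficients $1/D_n$ lie in $K$, the Goss polynomials $G_s^{\wtd{\pi}A}(X)$ lie in $K[X]$ (see the remark after \cref{df.Goss polynomial}). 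Put $u:=t_{\wtd{\pi}A}(\wtd{\pi}z_1)=\wtd{\pi}^{-1}t_A(z_1)$. Then $t_{\wtd{\pi}A}(\wtd{\pi}fz_1)=1/C_f(u^{-1})$, where $C$ is the Carlitz module with $C_\theta=\theta+\tau$ and has coefficients in $K$. As in the proof of \cref{lem.t(fz)-coefficient}, this is a power series in $K[\![u]\!]$ with leading term $u^{q^{\deg f}}$. Consequently each summand $\prod_j G_{s_j}^{\wtd{\pi}A}(t_{\wtd{\pi}A}(\wtd{\pi}f_jz_1))$ of $\wtd{G}_2(\ww{s};\bfz)$ lies in $K[\![u]\!]$. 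The normalizing factors combine correctly because $\wtd{\pi}^{-\wt(\ww{s})}=\prod_j\wtd{\pi}^{-s_j}$.

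\emph{Step 2 (finiteness per coefficient).} By \eqref{eq.ord_r(t(fz))}, or directly from the leading term $u^{q^{\deg f}}$, only the finitely many tuples $(f_1,\dots,f_m)$ with $q^{\deg f_1}\le N$ contribute to the coefficient of $u^N$. This is exactly the argument in the proof of \cref{prop:t-expansion of MGS}. Hence $\wtd{G}_2(\ww{s};\bfz)=\sum_{N}a_{\ww{s},N}u^N$ with every $a_{\ww{s},N}\in K$. Since $u$ is a $\CC_\infty^\times$-multiple of $t_A(z_1)$, this expansion is unique by \cite[Proposition~5.4]{BBP24}.

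\emph{Step 3 (linear algebra).} Let $F_1,\dots,F_m\in\wtd{\Gcal}_{K,\le w}^{(2)}$ be $K$-linearly independent. Each $F_i$ is a $K$-combination of normalized MGS, so it has $u$-coefficient vector $(a_{i,N})_N\in K^{\NN}$. By uniqueness of the expansion, these $m$ vectors are $K$-linearly independent. Therefore some finite $m\times m$ minor of the matrix $(a_{i,N})$ is nonzero, and that minor is an element of $K$. The same minor shows that the vectors, and hence $F_1,\dots,F_m$, are $\CC_\infty$-linearly independent, which proves injectivity.

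The main point needing care is Step 1. One must check that the normalization by $\wtd{\pi}$ exactly moves everything to the Carlitz lattice with $K$-rational data, and that the change of uniformizer from $t_A(z_1)$ to $u$ preserves the uniqueness of expansions. The remaining steps are routine.
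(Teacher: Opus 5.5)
Your proposal is correct and follows the paper's own route. You normalize by $\wtd{\pi}$ to pass to the Carlitz lattice, where $G_s^{\wtd{\pi}A}\in K[X]$ and $t_{\wtd{\pi}A}(\wtd{\pi}fz_1)=1/C_f(u^{-1})\in K[\![u]\!]$, use the order bound so that only finitely many terms feed each coefficient, and then pass from $K$- to $\CC_\infty$-linear independence. The one difference is this last step: your nonzero $K$-rational minor (the device the paper itself uses in the proof of \cref{thm.main-thm}~(ii)) replaces the paper's appeal to flatness of $\CC_\infty/K$, and it makes explicit the injectivity of $K[\![u]\!]\otimes_K\CC_\infty\to\CC_\infty[\![u]\!]$, which the paper's flatness remark leaves implicit.
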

\begin{proof}
    Notice that for any $\bfz\in \Omega^2$, $\Lamz=A$. Let $C=\phi^{\wtd{\pi}A}$ denote the Carlitz $A$-module and $\exp_C(X)=\exp_{\wtd{\pi}A}(X)\in K[\![X]\!]$ denote the Carlitz exponential (see \cite[\S3]{goss1996basic}). 
    By \cref{df.Goss polynomial,prop.G_a(t(X)) = sum 1/(X+λ)^a}, we have
    \[
    G_s^{\wtd{\pi}A}(X)\in K[X]
    \qquad
    (s\geq1),\qquad\text{and}\qquad
    G_s^{\wtd{\pi}A}
    \bigl(t_{\wtd{\pi}A}(\wtd{\pi}fz_1)\bigr)
    =
    \wtd{\pi}^{-s}
    G_s^A\bigl(t_A(fz_1)\bigr).
    \]
    On the other hand, for every $f\in A_+$, we have
    \[
    t_{\wtd{\pi}A}(\wtd{\pi}fz_1)
    =
    \frac{1}{\exp_{\wtd{\pi}A}(\wtd{\pi}fz_1)}
    =
    \frac{1}{C_f(\exp_{\wtd{\pi}A}(\wtd{\pi}z_1))}
    =
    \frac{1}{
    C_f\bigl(t_{\wtd{\pi}A}(\wtd{\pi}z_1)^{-1}\bigr)
    }\in
    K[\![t_{\wtd{\pi}A}(\wtd{\pi}z_1)]\!].
    \]
    It follows that for every nonempty index
    $\ww{s}=(s_1,\ldots,s_m)$,
    \[
    \wtd{G}_2(\ww{s};\bfz)= \sum_{\substack{f_1,\ldots,f_m\in A_+ \\ \deg f_1 > \cdots > \deg f_m}}G_{s_1}^{\wtd{\pi}A}(t_{\wtd{\pi}A}(\wtd{\pi}f_1z_1))\cdots G_{s_m}^{\wtd{\pi}A}(t_{\wtd{\pi}A}(\wtd{\pi}f_mz_1))
    \in
    K[\![t_{\wtd{\pi}A}(\wtd{\pi}z_1)]\!].
    \]
    Here, for each $M\geq 0$, \eqref{eq.ord_r(t(fz))} shows that only finitely many terms in the series above contribute to its $t_{\wtd{\pi}A}(\wtd{\pi}z_1)^M$-coefficient. Hence this coefficient belongs to $K$.
    Therefore, we get an injective $K$-linear map
    \[
    \widetilde{\mathcal{G}}_{K,\le w}^{(2)}\hookrightarrow K[\![t_{\wtd{\pi}A}(\wtd{\pi}z_1)]\!].
    \]
    Since $\CC_\infty$ is flat over $K$, 
    \[
    \widetilde{\mathcal{G}}_{K,\le w}^{(2)}\otimes_K \mathbb{C}_{\infty}\hookrightarrow \mathbb{C}_{\infty}[\![t_{\wtd{\pi}A}(\wtd{\pi}z_1)]\!]
    \]
    is still an injection.
    Notice that the image $\widetilde{\mathcal{G}}_{K,\le w}^{(2)}\otimes_K \mathbb{C}_{\infty}\to \mathbb{C}_{\infty}[\![t_{\wtd{\pi}A}(\wtd{\pi}z_1)]\!]$ is $\widetilde{\mathcal{G}}_{\mathbb{C}_{\infty},\le w}$, so the assertion follows.
\end{proof}

The following two consequences of~\cref{prop.Gcal is defined over K} are useful for the remaining proof:
\begin{enumerate}
    \item For $w\ge 1$, consider the exact sequence
    \[
    0\to \wtd{\Gcal}_{K,\leq w-1}^{(2)}\to \wtd{\Gcal}_{K,\leq w}^{(2)}\to \gr_w\wtd{\Gcal}_{K}^{(2)}\to 0.
    \]
    By~\cref{prop.Gcal is defined over K}, tensoring with $\CC_\infty$ shows that the natural map
    \begin{equation}\label{eq. gr define over K}
        \gr_w\wtd{\Gcal}_{K}^{(2)}\otimes_K\CC_\infty \xrightarrow{\sim} \gr_w\wtd{\Gcal}_{\CC_\infty}^{(2)}
    \end{equation}
    is an isomorphism.
    \item For $w\ge 1$, we have
    \begin{equation}\label{eq.intersection property}
        \wtd{\Gcal}_{K,\le w}^{(2)} \cap \wtd{\Gcal}_{\CC_\infty,\le w-1}^{(2)} = \wtd{\Gcal}_{K,\le w-1}^{(2)}.
    \end{equation}
    To see this, we first note that the inclusion ``$\supseteq$'' is clear.
    On the other hand, suppose $x \in \wtd{\Gcal}_{K,\le w}^{(2)} \cap \wtd{\Gcal}_{\CC_\infty,\le w-1}^{(2)}$.
    Since $\CC_\infty/K$ is faithfully flat, the natural map
    \[
    \gr_w\wtd{\Gcal}_K^{(2)} \to \gr_w\wtd{\Gcal}_K^{(2)}\otimes_K\CC_\infty
    \]
    is an injection.
    Composing with~\eqref{eq. gr define over K}, we obtain an injection
    \[
    \gr_w\wtd{\Gcal}_{K}^{(2)} \hookrightarrow \gr_w\wtd{\Gcal}_{K}^{(2)}\otimes_K\CC_\infty \xrightarrow{\sim} \gr_w\wtd{\Gcal}_{\CC_\infty}^{(2)}
    \]
    We see that the image of the element $x + \wtd{\Gcal}_{K,\leq w-1}^{(2)}\in \gr_w\wtd{\Gcal}_{K}^{(2)}$ under the composite map above is
    \[
    x + \wtd{\Gcal}_{\CC_\infty,\leq w-1}^{(2)} = 0\in \gr_w\wtd{\Gcal}_{\CC_\infty}^{(2)}
    \]
    and thus $x + \wtd{\Gcal}_{K,\leq w-1}^{(2)} = 0\in \gr_w\wtd{\Gcal}_{K}^{(2)}$, which gives the desired result.
\end{enumerate}

\begin{prop}\label{prop. MES iso to gr MGS}
    Let $L = K$ or $\CC_\infty$.
    Then for any integer $w\ge 0$ the map 
    \[
    \varphi_{L,w}:=\left(\wtd{E}_2(\ww{s};\bfz) \mapsto \wtd{G}_2(\ww{s};\bfz) \mod{\wtd{\Gcal}_{L,\le w-1}^{(2)}} \right) : \wtd{\Zcal}_{L,w}^{(2)} \to \gr_w\wtd{\Gcal}_{L}^{(2)}.
    \] 
    is a well-defined isomorphism.
\end{prop}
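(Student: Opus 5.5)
The plan is to compare the normalized Goss expansion \eqref{eq.normalized MES Goss expansion} with the filtration on normalized MGS, and to use \cref{thm.main-thm over C_infty}~(i) for injectivity. The case $w=0$ is trivial: both sides are $L$, spanned by $\wtd{E}_2(\varnothing)=\wtd{G}_2(\varnothing)=1$. So assume $w\geq1$, and let $\ww{s}$ be an index of weight $w$.

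\emph{Congruence.} In \eqref{eq.normalized MES Goss expansion}, the $i=\dep(\ww{s})$ term is $\wtd{G}_2(\ww{s};\bfz)$, since $\ww{s}^{(\dep\ww{s})}=\varnothing$ and $\wtd{\zeta}_A(\varnothing)=1$. Every other term is a $\CC_\infty$-multiple of $\wtd{G}_2(\ww{s}_{(i)};\bfz)$ with $\wt(\ww{s}_{(i)})<w$; this includes the constant term $\wtd{\zeta}_A(\ww{s})$, which is a multiple of $\wtd{G}_2(\varnothing)=1$. Hence
\[
\wtd{E}_2(\ww{s};\bfz)\equiv\wtd{G}_2(\ww{s};\bfz)\pmod{\wtd{\Gcal}^{(2)}_{\CC_\infty,\le w-1}}.
\]
The coefficients $\wtd{\zeta}_A(\ww{s}^{(i)})$ lie in $\CC_\infty$, not in $K$, so for $L=K$ this congruence only holds modulo the $\CC_\infty$-span. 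This is the main subtlety, and \eqref{eq.intersection property} is what handles it.

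\emph{Well-definedness.} Suppose $\sum c_{\ww{s}}\wtd{E}_2(\ww{s};\bfz)=0$ with $c_{\ww{s}}\in L$ and all $\wt(\ww{s})=w$. By the congruence,
\[
x:=\sum c_{\ww{s}}\wtd{G}_2(\ww{s};\bfz)\in \wtd{\Gcal}^{(2)}_{L,\le w}\cap\wtd{\Gcal}^{(2)}_{\CC_\infty,\le w-1}.
\]
For $L=\CC_\infty$ this already gives $x\in\wtd{\Gcal}^{(2)}_{\CC_\infty,\le w-1}$. For $L=K$, \eqref{eq.intersection property} gives $x\in\wtd{\Gcal}^{(2)}_{K,\le w-1}$. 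In either case $\varphi_{L,w}$ is a well-defined $L$-linear map. It is surjective because $\gr_w\wtd{\Gcal}^{(2)}_L$ is spanned by the classes of $\wtd{G}_2(\ww{s};\bfz)$ with $\wt(\ww{s})=w$: generators of weight $<w$ have zero class.

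\emph{Injectivity.} First I would show, by induction on $v$, that
\[
\wtd{\Gcal}^{(2)}_{\CC_\infty,\le v}\subseteq\sum_{u\le v}\wtd{\Zcal}^{(2)}_{\CC_\infty,u}.
\]
This follows from solving \eqref{eq.normalized MES Goss expansion} for $\wtd{G}_2(\ww{t};\bfz)$ in terms of $\wtd{E}_2(\ww{t};\bfz)$ and normalized MGS of strictly smaller weight. Now suppose $\sum c_{\ww{s}}\wtd{G}_2(\ww{s};\bfz)\in\wtd{\Gcal}^{(2)}_{L,\le w-1}$. By the congruence and the inclusion just proved,
\[
\sum c_{\ww{s}}\wtd{E}_2(\ww{s};\bfz)\in\sum_{u<w}\wtd{\Zcal}^{(2)}_{\CC_\infty,u}.
\]
Multiply by $\wtd{\pi}^{w}$. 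The left side becomes $\sum c_{\ww{s}}E_2(\ww{s};\bfz)\in\Zcal^{(2)}_{\CC_\infty,w}$. The right side becomes a sum of elements of $\Zcal^{(2)}_{\CC_\infty,u}$ with $u<w$, using $\wtd{\Zcal}^{(2)}_{\CC_\infty,u}=\wtd{\pi}^{-u}\Zcal^{(2)}_{\CC_\infty,u}$. By the direct sum decomposition of \cref{thm.main-thm over C_infty}~(i), the weight-$w$ component vanishes, so $\sum c_{\ww{s}}E_2(\ww{s};\bfz)=0$. Hence $\sum c_{\ww{s}}\wtd{E}_2(\ww{s};\bfz)=0$, and $\varphi_{L,w}$ is injective.
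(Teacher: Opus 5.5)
Your proposal is correct and follows essentially the same route as the paper. The normalized Goss expansion \eqref{eq.normalized MES Goss expansion} gives $\wtd{E}_2(\ww{s};\bfz)\equiv\wtd{G}_2(\ww{s};\bfz)$ modulo $\wtd{\Gcal}^{(2)}_{\CC_\infty,\le w-1}$, \eqref{eq.intersection property} handles well-definedness for $L=K$, and injectivity follows from \cref{thm.main-thm over C_infty}~(i). Two steps the paper leaves implicit you spell out: the inclusion $\wtd{\Gcal}^{(2)}_{\CC_\infty,\le v}\subseteq\sum_{u\le v}\wtd{\Zcal}^{(2)}_{\CC_\infty,u}$ and the rescaling by $\wtd{\pi}^{w}$.
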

\begin{proof}
    Assume $w\geq1$ as the case $w=0$ is clear.
    We first show the well-definedness for $L = \mathbb{C}_{\infty}$.
    Suppose
    \[
    0 = \sum_{i=1}^{m} c_i\widetilde{E}_2(\ww{s}_i; \mathbf{z}) \in \wtd{\Zcal}_{\CC_\infty,w}^{(2)}
    \]
    for some $c_i\in \CC_\infty$.
    Then by~\eqref{eq.normalized MES Goss expansion} we have
    \[
    \sum_{i=1}^{m} c_i\widetilde{G}_2(\ww{s}_i; \mathbf{z}) = -\sum_{i=1}^{m} c_i \sum_{j=0}^{\dep(\ww{s}_i)-1} \widetilde{\zeta}_A((\ww{s}_i)^{(j)})\widetilde{G}_2((\ww{s}_i)_{(j)}) \in \widetilde{\mathcal{G}}^{(2)}_{\mathbb{C}_{\infty}, \le w-1}.
    \]
    Hence, $\varphi_{\mathbb{C}_{\infty}, w}$ is well-defined.
    
    Now, we show the well-definedness for $L=K$. Suppose that 
    \[
    0 = \sum_{i=1}^{m} c_i\widetilde{E}_2(\ww{s}_i; \mathbf{z}) \in \wtd{\Zcal}_{K,w}^{(2)}
    \]
    for some $c_i\in K$.
    Similarly, by~\eqref{eq.normalized MES Goss expansion} and~\eqref{eq.intersection property}, we see that
    \[
    \sum_{i=1}^{m}c_i\widetilde{G}_2(\ww{s}_i; \mathbf{z})\in \widetilde{\mathcal{G}}^{(2)}_{K, \le w}\cap \widetilde{\mathcal{G}}^{(2)}_{\mathbb{C}_{\infty}, \le w-1} = \widetilde{\mathcal{G}}^{(2)}_{K, \le w-1}
    \]
    Hence, $\varphi_{K, w}$ is well-defined.

    For $L=K$ or $L=\CC_{\infty}$, we note that the surjectivity of $\varphi_{L,w}$ is clear from the definition.
    We then show the injectivity of $\varphi_{L, w}$ as follows.
    Suppose that $\sum_{i=1}^{m}c_i\widetilde{E}_2(\ww{s}_i; \mathbf{z})\in \widetilde{\mathcal{Z}}^{(2)}_{L, w}$ such that 
    \[
    \sum_{i=1}^{m}c_i\widetilde{G}_2(\ww{s}_i; \mathbf{z}) \in \widetilde{\mathcal{G}}^{(2)}_{L, \le w-1}\subseteq\widetilde{\mathcal{G}}^{(2)}_{\CC_\infty, \le w-1}.
    \]
    By~\eqref{eq.normalized MES Goss expansion}, we have
    \[
    \sum_{i=1}^{m}c_i\widetilde{E}_2(\ww{s}_i; \mathbf{z}) -\sum_{i=1}^{m}c_i\widetilde{G}_2(\ww{s}_i; \mathbf{z}) \in \widetilde{\mathcal{G}}^{(2)}_{\CC_\infty, \le w-1},
    \]
    which implies
    \[
    \sum_{i=1}^{m}c_i\widetilde{E}_2(\ww{s}_i; \mathbf{z})\in \widetilde{\mathcal{G}}^{(2)}_{\mathbb{C}_{\infty}, \le w-1} \subseteq \sum_{j=0}^{w-1}\widetilde{\mathcal{Z}}^{(2)}_{\mathbb{C}_{\infty}, j}.
    \]
    Therefore, by~\cref{thm.main-thm over C_infty}~(i), we obtain
    \[
    \sum_{i=1}^{m}c_iE_2(\ww{s}_i; \mathbf{z})\in \sum_{j=0}^{w-1}\mathcal{Z}^{(2)}_{\mathbb{C}_{\infty}, j} \cap \Zcal_{\CC_\infty,w}^{(2)} = 0.
    \]
    This proves the injectivity.
\end{proof}

\begin{proof}[Proof of~\cref{thm.main-thm over C_infty}~(ii)]
    It suffices to show the case $r=2$.
    For general $r\geq 2$, the result follows by adapting the induction argument of~\cref{thm.main-thm}~(ii), with $\ov{K}$ replaced by $\CC_\infty$ and the base case of the induction replaced by the case $r=2$.
    
    Assume $r=2$ and $w\ge 1$.
    The map $\wtd{\Zcal}_{K,w}^{(2)}\otimes_K \CC_\infty \to \wtd{\Zcal}_{\CC_\infty,w}^{(2)}$ fits into the following commutative diagram:
    \[
    \begin{tikzcd}
    {\wtd{\Zcal}_{K,w}^{(2)}\otimes_K \CC_\infty}
    &
    {\wtd{\Zcal}_{\CC_\infty,w}^{(2)}}
    \\
    {\gr_w\wtd{\Gcal}_K^{(2)}\otimes_K \CC_\infty}
    &
    {\gr_w\wtd{\Gcal}_{\CC_\infty}^{(2)}}
    \arrow[from=1-1, to=1-2]
    \arrow["\sim", sloped, from=1-1, to=2-1]
    \arrow["\sim"', sloped, from=1-2, to=2-2]
    \arrow["\sim", from=2-1, to=2-2]
    \end{tikzcd}
    \]
    where the three isomorphisms follow from \cref{prop. MES iso to gr MGS} and~\eqref{eq. gr define over K}.
    This implies $\wtd{\Zcal}_{K,w}^{(2)}\otimes_K \CC_\infty \xrightarrow{\sim} \wtd{\Zcal}_{\CC_\infty,w}^{(2)}$ is an isomorphism.
    Therefore, by~\eqref{eq. isom of nonnormalized and normalized MES}, we obtain
    \[
    \Zcal_{K,w}^{(2)}\otimes_K \CC_\infty \xrightarrow{\sim} \Zcal_{\CC_\infty,w}^{(2)}.
    \]
    The assertion for $r=2$ follows from summing over all $w\ge 0$.
    This completes the proof.
\end{proof}
\end{appendix}

\subsection*{Acknowledgments}
The authors were partially supported by the National Science and Technology Council grant no.~115-2115-M-007-004-MY3.


\sloppy


\begin{thebibliography}{CCHT26}

\bibitem[ABP04]{ABP04}
G.~W. Anderson, W.~D. Brownawell, and M.~A. Papanikolas,
\emph{Determination of the algebraic relations among special
$\Gamma$-values in positive characteristic},
Ann. of Math. (2) \textbf{160} (2004), no.~1, 237--313.

\bibitem[AT09]{AT09}
G.~W. Anderson and D.~S. Thakur,
\emph{Multizeta values for $\mathbb{F}_q[t]$, their period interpretation,
and relations between them},
Int. Math. Res. Not. IMRN \textbf{2009} (2009), no.~11, 2038--2055.

\bibitem[And04]{And04}
Y.~Andr\'e,
\emph{Une introduction aux motifs (motifs purs, motifs mixtes, p\'eriodes)},
Panoramas et Synth\`eses, vol.~17,
Soci\'et\'e Math\'ematique de France, Paris, 2004.

\bibitem[Bac12]{bachmann2012multiple}
H.~Bachmann,
\emph{Multiple Zeta-Werte und die Verbindung zu Modulformen durch
Multiple Eisensteinreihen},
Master's thesis, Universit\"at Hamburg, 2012.

\bibitem[Bac23]{Bachmann2023StuffleRegularized}
H.~Bachmann,
\emph{Stuffle regularized multiple Eisenstein series revisited},
RIMS K\^oky\^uroku \textbf{2238} (2023), 73--86.

\bibitem[BK26]{BK26}
H.~Bachmann and H.~Kanno,
\emph{The $\mathfrak{sl}_2$-algebra structure of multiple Eisenstein series},
arXiv:2609.03777 [math.NT], 2026.

\bibitem[BW07]{BW07}
A.~Baker and G.~W\"ustholz,
\emph{Logarithmic Forms and Diophantine Geometry},
New Mathematical Monographs, vol.~9,
Cambridge University Press, Cambridge, 2007.

\bibitem[BBP24]{BBP24}
D.~Basson, F.~Breuer, and R.~Pink,
\emph{Drinfeld Modular Forms of Arbitrary Rank},
Mem. Amer. Math. Soc. \textbf{304} (2024), no.~1531.

\bibitem[BF26]{BF26}
J.~I. Burgos Gil and J.~Fres\'an,
\emph{Multiple Zeta Values: From Numbers to Motives},
Clay Math. Proc., in press, 2026.

\bibitem[Car35]{Car35}
L.~Carlitz,
\emph{On certain functions connected with polynomials in a Galois field},
Duke Math. J. \textbf{1} (1935), no.~2, 137--168.

\bibitem[Cha14]{Cha14}
C.-Y. Chang,
\emph{Linear independence of monomials of multizeta values in positive
characteristic},
Compos. Math. \textbf{150} (2014), no.~11, 1789--1808.

\bibitem[CCHT25]{CCHT25}
T.-W. Chang, S.-Y. Chen, F.-J. Huang, and H.-C. Tsui,
\emph{On $q$-shuffle relations for multiple Eisenstein series of arbitrary
rank in positive characteristic},
arXiv:2504.18879 [math.NT], 2025.

\bibitem[CCHT26]{CCHT26}
T.-W. Chang, S.-Y. Chen, F.-J. Huang, and H.-C. Tsui,
\emph{Algebra structures of multiple Eisenstein series in positive
characteristic},
arXiv:2603.10376 [math.NT], 2026.

\bibitem[Che17]{Chen2017}
H.-J. Chen,
\emph{On shuffle of double Eisenstein series in positive characteristic},
J. Th\'eor. Nombres Bordeaux \textbf{29} (2017), no.~3, 815--825.

\bibitem[Dri74]{Drinfeld1974}
V.~G. Drinfeld,
\emph{Elliptic modules},
Math. USSR-Sb. \textbf{23} (1974), no.~4, 561--592.

\bibitem[GKZ06]{gkz2006double}
H.~Gangl, M.~Kaneko, and D.~Zagier,
\emph{Double zeta values and modular forms},
in \emph{Automorphic Forms and Zeta Functions},
World Scientific, Hackensack, NJ, 2006, pp.~71--106.

\bibitem[Gek88]{Gekeler88}
E.-U. Gekeler,
\emph{On the coefficients of Drinfeld modular forms},
Invent. Math. \textbf{93} (1988), no.~3, 667--700.

\bibitem[Gek25]{Gek25-DMF-VII}
E.-U. Gekeler,
\emph{On Drinfeld modular forms of higher rank VII: Expansions at the boundary},
J. Number Theory \textbf{269} (2025), 260--340.

\bibitem[Gon01]{Goncharov01}
A.~B. Goncharov,
\emph{Multiple $\zeta$-values, Galois groups, and geometry of modular
varieties},
in \emph{European Congress of Mathematics, Vol.~I (Barcelona, 2000)},
Progr. Math., vol.~201,
Birkh\"auser, Basel, 2001, pp.~361--392.

\bibitem[Gos80]{Gos1980}
D.~Goss,
\emph{The algebraist's upper half-plane},
Bull. Amer. Math. Soc. (N.S.) \textbf{2} (1980), no.~3, 391--415.

\bibitem[Gos96]{goss1996basic}
D.~Goss,
\emph{Basic Structures of Function Field Arithmetic},
Ergebnisse der Mathematik und ihrer Grenzgebiete, 3.~Folge, vol.~35,
Springer-Verlag, Berlin, 1996.

\bibitem[Hof92]{Hoffman92}
M.~E. Hoffman,
\emph{Multiple harmonic series},
Pacific J. Math. \textbf{152} (1992), no.~2, 275--290.

\bibitem[Pel25]{Pel2025}
F.~Pellarin,
\emph{The Analytic Theory of Vectorial Drinfeld Modular Forms},
Mem. Amer. Math. Soc. \textbf{312} (2025), no.~1581.

\bibitem[Tha04]{thakur2004function}
D.~S. Thakur,
\emph{Function Field Arithmetic},
World Scientific Publishing Co., River Edge, NJ, 2004.

\bibitem[Tha09]{Tha09}
D.~S. Thakur,
\emph{Power sums with applications to multizeta and zeta zero distribution
for $\mathbb{F}_q[t]$},
Finite Fields Appl. \textbf{15} (2009), no.~4, 534--552.

\bibitem[Tha10]{thakur2010shuffle}
D.~S. Thakur,
\emph{Shuffle relations for function field multizeta values},
Int. Math. Res. Not. IMRN \textbf{2010} (2010), no.~11, 1973--1980.

\bibitem[Wus89]{Wus89}
G.~W\"ustholz,
\emph{Algebraische Punkte auf analytischen Untergruppen algebraischer Gruppen},
Ann. of Math. (2) \textbf{129} (1989), no.~3, 501--517.

\bibitem[Yu97]{Yu97}
J.~Yu,
\emph{Analytic homomorphisms into Drinfeld modules},
Ann. of Math. (2) \textbf{145} (1997), no.~2, 215--233.

\bibitem[Zag94]{Zagier94}
D.~Zagier,
\emph{Values of zeta functions and their applications},
in \emph{First European Congress of Mathematics, Vol.~II (Paris, 1992)},
Progr. Math., vol.~120,
Birkh\"auser, Basel, 1994, pp.~497--512.

\bibitem[Zha16]{Zhao2016}
J.~Zhao,
\emph{Multiple Zeta Functions, Multiple Polylogarithms and Their Special Values},
Series on Number Theory and Its Applications, vol.~12,
World Scientific Publishing Co. Pte. Ltd., Hackensack, NJ, 2016.

\end{thebibliography}
\end{document}